\newtheorem{theorem}{Theorem}[section]
\newtheorem{lemma}[theorem]{Lemma}
\newtheorem{hypothesis}[theorem]{Hypothesis}
\theoremstyle{definition}
\newtheorem{definition}[theorem]{Definition}
\theoremstyle{remark}
\newtheorem{remark}[theorem]{Remark}
\numberwithin{equation}{section}
\DeclareMathOperator{\vspan}{span}
\newcommand{\eps}{\varepsilon}
\newcommand{\ssubset}{\subset\joinrel\subset}
\newcommand{\norm}[1]{\left\lVert#1\right\rVert}
\newcommand{\abs}[1]{\left\lvert#1\right\rvert}  
\DeclareMathOperator{\coker}{coker}
\newcommand{\N}{\mathbb{N}}
\newcommand{\R}{\mathbb{R}}
\newcommand{\supp}{\mathrm{supp}}
\title{Existence of Weak Solutions for a Nonlocal Klausmeier Model}
\author{
Gabriela Jaramillo \\
Department of Mathematics\\
University of Houston\\
Houston, TX 77004\\
\texttt{gabriela@math.uh.edu} 
\thanks{This work is supported by NSF grant DMS-2307500 (G.J., C.M.)}
\And
Cristian Meraz \\
Department of Mathematics\\
University of Houston\\
Houston, TX 77004\\
\texttt{cmeraz@uh.edu}
\thanks{Corresponding author.}
\\
\\
}
\begin{document}
\maketitle
\begin{abstract}
We establish the existence of weak solutions for a nonlocal Klausmeier model within a small time interval $[0, T)$.
The Klausmeier model is a coupled,  nonlinear system of partial differential equations governing plant biomass and water dynamics in semiarid regions. The original model posits that plants disperse their seed according to classical diffusion. Instead, we opt for a nonlocal diffusive operator in alignment with ecological field data that validates long-range dispersive behaviors of plants and seeds. 
The equations, defined on a finite interval in $\mathbb{R}$, feature homogeneous Dirichlet boundary conditions for the water equation and nonlocal Dirichlet volume constraints for the plant biomass equation. The nonlocal operator involves convolution with a symmetric and spatially extended convolution kernel possessing mild integrability and regularity properties. We employ the Galerkin method to establish the existence of weak solutions. The key challenge comes from the nonlocal operator; we define it on a subspace of $L^{2}$ instead of $H^{1}$, precluding the use of Aubin's compactness theorem to prove the weak convergence of nonlinear terms. To overcome this, we modify the model and introduce two new equations for the spatial derivatives of plant biomass and water. This procedure allows us to recover enough regularity to establish compactness and complete the proof.
\end{abstract}

\keywords{ Klausmeier Model, Nonlocal Dispersal, Galerkin Method}
{\bf AMS Classification: 45K05, 35R09, 47G20, 35Q92} 

\section{Introduction}

The Klausmeier model and its variations \cite{klausmeier1999, vanderStelt2013, sewalt2017, sherratt2018, bastiaansen2019}  provide a relatively simple mathematical description of arid ecosystems
that is both capable of reproducing the vegetation patterns seen in nature \cite{BROMLEY19971, AGUIAR1999273, LEPRUN199925, couteron2001, macfadyen1950, klausmeier1999, meron2001, wilcox2003}, as well as being amenable to mathematical analysis \cite{Sherratt_2010, sherratt2011, sherratt2013, sherratt2013_2, sherratt2013_3,vanderStelt2013, sewalt2017, sherratt2018, bastiaansen2019}. 
Interest in these models stems from a desire to understand the relation between the formation of vegetation patterns and possibly irreversible changes to these ecosystems, including complete loss of vegetation. 
In this paper we consider a nonlocal version of these equations where the spread of plants is described using a convolution operator \cite{sherratt2018}. With an eye towards developing a  finite element scheme to simulate this system of now integro-differential equations, we pose the equations on a bounded 1-d domain with nonlocal Dirichlet boundary constraints and prove the existence of small-time weak solutions using a modified Galerkin method.

 To motivate our model equations, we start our discussion with the original system used by  Klausmeier \cite{klausmeier1999}, which after a non-dimensional analysis takes the form,
 \begin{equation}
\label{eqn:original_Klaus}
\begin{split}
u_t & = d \Delta u + u^2 w - \mu w\\
w_t  & = \nu w_{x} + a - w - u^2 w.
\end{split}
\end{equation}
Here, $u$ represents the density of plant biomass, while $w$ represents the water content in the soil. The original equations are then posed on the plane, so that $u$ and $v$ are functions of $(x,t) \in \R^2 \times \{t \geq 0\}$.
In the water equation,
 the parameter $\nu$ encodes the slope in the terrain, so that water flows downhill in the negative $x$-direction if $\nu >0$. 
 The constant $a$ then describes the amount of rainfall that reaches the system, 
 the term $-w$ represents water loss due to evaporation,
  and the expression $- u^2 w$ models water uptake by plant roots. Notice that this last term describes the positive feedback created by plants, whose presence promotes water infiltration into the soil and thus facilitates water absorption. 
The model then assumes that plant growth is proportional to water uptake and is therefore given by $ u^2w$, while
the parameters $d$ and $\mu$ represent the rate of plant dispersal and plant mortality, respectively.

The above equations support solutions representing striped patterns and were originally used to model the tiger bush patterns seen on sloped terrains in semi-desert areas \cite{macfadyen1950,worral1959,montana1990}. From a mathematical point of view these patterns arise as a result of the interplay between the advection term in the water equation and the nonlinearity, $u^2w$, which together model a scale-dependent feedback. They describe how the local flow of water towards vegetation comes at the cost of available water at larger scales. As a result, the system \eqref{eqn:original_Klaus} does not support the existence of striped patterns when $\nu =0$. Since these results are inconsistent with field studies showing spots and labyrinth structures on flat terrains \cite{belsky1986population, white1970brousse, dunkerley2002oblique},
alternative versions of the Klausmeier model include a Laplacian term describing the flow of water into the soil via diffusion. See also \cite{vanderStelt2013, sewalt2017, gilad2007mathematical, gilad2004ecosystem, bennett2019long, vonhardenberg2001diversity, hillel2014environmental}.

A second modification to the Klausmeier model used in the literature is based on \cite{bastiaansen2019}, where the authors assume that plant growth is limited not only by water availability, but also by soil resources. As a result, plant growth depends on the local density of plant biomass and is thus modeled using a logistic-type growth term of the form $ (1-bu) u^2w$.

Finally, while in the original system plant dispersal is described as a local diffusion process, and thus modeled using the Laplacian, numerous field studies show that this process is actually nonlocal (see \cite{bullock2017synthesis} and references therein). Indeed, for many species, the spatial spread of plants is a consequence of seed dispersal by wind or animals, and is therefore better described using an integral operator of the form,
\[ K u = \int_ \R (u(y) - u(x)) \gamma(x,y) \; dy.\]
Here, the kernel $\gamma(x,y)$ can be viewed as a probability density function describing the likelihood that seeds originating at position $y$ land at location $x$, see \cite{pueyo2008}. Consequently,  the operator $K$ describes the flow in and out of position $x$ and can thus be viewed as a nonlocal form of diffusion \cite{du2012analysis}. Here we consider spatially extended kernels that are positive, symmetric and live in $W^{1,1}(\R) \cap H^1(\R)$. In addition, we assume that as a function of $|z|=|x-y|$ these kernels have finite second moments.

Our nonlocal Klausmeier model therefore takes the form,
\begin{equation}
\label{eqn:model-explain}
\begin{aligned}
u_{t} &= \overbrace{d K u }^{\stackrel{ \text{nonlocal} }{ \text{plant dispersal} }} + \overbrace{u^{2} w (1 - b u)}^{\stackrel{\text{plant growth due to}}{\text{water uptake}}} - \overbrace{\mu u}^{\text{plant loss}} ,
\\
w_{t} &= \underbrace{w_{xx}}_{\stackrel{\text{water diffusion}}{\text{through soil}}} + \underbrace{\nu w_{x}}_{\stackrel{\text{water flow}}{\text{downhill}}} + 
\underbrace{a}_{\text{rainfall}} - \underbrace{w}_{\text{evaporation}} - \underbrace{u^{2} w}_{\stackrel{\text{water uptake}}{\text{by plants}}} , \\ 
\end{aligned}
\end{equation}
where the operator $K$ is defined as above. For simplicity and ease of exposition the equations are posed on a bounded interval in $\R$, $\Omega =[-L,L]$ with appropriate boundary conditions. For the water equation we use homogeneous Dirichlet boundary conditions, modeling a desert state on the boundary of $\Omega$. 
For the plant equation we use instead nonlocal Dirichlet boundary constraints and require $u =0$ on $\Omega^c = \R \setminus \Omega$. This type of boundary constraint is necessary given the structure of our operator $K$. Since this map is defined by an integral defined over the whole real line, we must have information about $u$ on all of $\R$ and not only on the domain of interest $\Omega$. The choice of homogeneous nonlocal Dirichlet constraints is then consistent with our assumption of having a dessert state surrounding our domain $\Omega$.

As mentioned above, our goal in this paper is to prove the existence of small-time weak solutions for the system of equations \eqref{eqn:model-explain}. One of the main difficulties we face comes from the convolution operator, \( K \). Unlike the Laplacian, \( K \) does not define a bilinear form with domain \( H^{1}(\Omega) \times H^{1}(\Omega) \), but rather with domain \( L^{2}(\Omega) \times L^{2}(\Omega) \). 
Hence, the sequence of approximations generated by the Galerkin method belong to a space that does not have enough regularity. Consequently, we cannot use Aubin's Compactness Theorem to show that the nonlinear terms in the equations converge strongly in \( L^{2}(\Omega) \), which would then allow us to prove weak convergence of the nonlinear terms. (For a statement of Aubin's compactness theorem, see Section \ref{sec:3}.)
The approach we take here is to extend the system \eqref{eqn:model-explain} and include auxiliary equations for two additional variables, \( v \) and \( z \). We then show that, as approximate solutions, the auxiliary variables correspond to the derivatives of the Galerkin approximations for \( u \) and \( w \), and thus recover the required regularity to carry out the rest of the proof. 
We summarize the result of this procedure in the following theorem.

\begin{theorem}[Main Theorem]
\label{thm:main}
Let $\gamma(z)$ satisfy Hypothesis \ref{hyp:kernel} and take \( u_{0}, w_{0} \) to be given functions in $H^1(\Omega) \times H^1_0(\Omega)$. Then, 
there exist positive constants \( \mathcal{C}_{1} \), \( \mathcal{C}_{2} \), and \( T \), such that if 
\begin{equation*}
 \norm{ u_{0} }_{L^{2}(\Omega)} + \norm{ w_{0} }_{L^{2}(\Omega)} < \mathcal{C}_{1}, \quad 
 \norm{ \partial_{x} u_{0} }_{L^{2}(\Omega)} + \norm{ \partial_{x} w_{0} }_{L^{2}(\Omega)} < \mathcal{C}_{2}, 
\end{equation*}
 the weak formulation of equations \eqref{eqn:model-explain}, i.e.
\begin{align*}
 \left\langle u_{t}, \varphi \right\rangle_{\left( V^{\prime} \times V \right)} + d B[u, \varphi] + \mu \left( u, \varphi \right)_{L^{2}(\Omega)} &= 
 \left( u^{2} w (1 - b u), \varphi \right)_{L^{2}(\Omega)}, \\ 
 \left\langle w_{t}, \psi \right\rangle_{\left( H^{-1}(\Omega) \times H^{1}_{0}(\Omega) \right)} + G[w, \psi]+ \left( w, \psi \right) &= 
 \left( a - u^{2} w, \psi \right)_{L^{2}(\Omega)} ,
\end{align*}
has a unique solution \( (u,w) \in L^{2}\left( 0,T; H^{1}(\Omega) \right) \times L^{2}\left( 0,T; H^{1}(\Omega) \right) \) satisfying the initial conditions, \( u(x,0) = u_{0} \) and \( w(x,0) = w_{0} \).
\end{theorem}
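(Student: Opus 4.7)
The plan is to apply a Galerkin scheme to an \emph{extended} system obtained by adjoining to \eqref{eqn:model-explain} two auxiliary equations for $v = \partial_x u$ and $z = \partial_x w$, derived by formally differentiating the original equations in space. This augmentation is forced by the regularity gap flagged in the introduction: because $K$ gives a bilinear form on $L^2(\Omega) \times L^2(\Omega)$ rather than on $H^1(\Omega) \times H^1(\Omega)$, the $u$-equation alone would only provide $L^2$ bounds on the Galerkin iterates $u_n$, and Aubin's theorem would fail to deliver the strong convergence needed to pass to the limit in the cubic/quartic terms $u_n^2 w_n(1 - b u_n)$ and $u_n^2 w_n$. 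The equations for $v_n$ and $z_n$ restore the missing $H^1$-type control.

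\textbf{Galerkin construction and a priori estimates.} I would fix an orthonormal basis $\{\varphi_k\}$ for the subspace $V \subset L^2(\Omega)$ encoding the nonlocal Dirichlet constraint $u = 0$ on $\Omega^c$ and an orthonormal basis $\{\psi_k\}$ for $H^1_0(\Omega)$ (e.g.\ the Dirichlet eigenfunctions of $-\partial_x^2$), and write $u_n, v_n, w_n, z_n$ as finite expansions. Projecting the extended system yields a nonlinear ODE system with polynomial right-hand side, locally solvable by Picard--Lindel\"of. Testing the $u$-equation against $u_n$, the $v$-equation against $v_n$, the $w$-equation against $w_n$, and the $z$-equation against $z_n$, and adding, I obtain an energy identity. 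The diffusion in the $w,z$-equations and the coercivity of the bilinear form $B$ associated with $K$ supply dissipation, while the cubic/quartic nonlinear terms are absorbed by Young's inequality combined with the one-dimensional Sobolev embedding $H^1(\Omega) \hookrightarrow L^\infty(\Omega)$. The smallness hypotheses on $\norm{u_0}_{L^2}$, $\norm{w_0}_{L^2}$, $\norm{\partial_x u_0}_{L^2}$, $\norm{\partial_x w_0}_{L^2}$ are exactly what is needed to close a Gronwall-type inequality up to a positive time $T$, producing bounds on $(u_n,v_n,w_n,z_n)$ in $L^\infty(0,T;L^2)\cap L^2(0,T;H^1\text{-type})$ that are uniform in $n$, along with companion bounds on the time derivatives in dual spaces.

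\textbf{Compactness and limit passage.} Once $v_n$ is bounded in $L^2(0,T;L^2(\Omega))$ and $\partial_t v_n$ in a negative-order Sobolev space, Aubin's compactness theorem yields strong convergence $v_n \to v$ in $L^2((0,T)\times \Omega)$, and by the same argument $z_n \to z$ strongly. The identification $v = \partial_x u$ and $z = \partial_x w$, together with weak convergence of $u_n, w_n$, then upgrades the convergence of $u_n, w_n$ to strong convergence in $L^2(0,T;L^2(\Omega))$, which combined with the $L^\infty$-bounds coming from Sobolev embedding is enough to pass to the limit in each nonlinear term $u_n^2 w_n$, $u_n^3 w_n$. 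Uniqueness on $[0,T)$ follows from a standard subtraction argument: writing the equations for the difference of two solutions, testing with the differences themselves, and using the smallness of $(u,w)$ to absorb the resulting cubic terms into the dissipative part, Gronwall's inequality forces the difference to vanish.

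\textbf{Main obstacle.} The delicate point is to ensure that the Galerkin variables $v_n, z_n$ genuinely represent $\partial_x u_n, \partial_x w_n$, rather than being independent approximations that only converge to the correct derivative in the limit. This requires compatible choices of the two bases so that the differentiation operator maps each basis element into the span of the other (modulo boundary terms), together with a careful treatment of the nonlocal Dirichlet volume constraint for $u$ under differentiation of the convolution and of the non-homogeneous boundary values of $z = \partial_x w$ at $x = \pm L$. Resolving this coupling, presumably by initializing $v_n(0), z_n(0)$ as the projections of $\partial_x u_0, \partial_x w_0$ and showing directly from the ODE system that the identity $v_n = \partial_x u_n$ is preserved in time, is what makes the modified Galerkin method succeed where the standard one fails.
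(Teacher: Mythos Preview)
Your overall strategy matches the paper's: augment the system with auxiliary equations for $v=\partial_x u$ and $z=\partial_x w$, run Galerkin on the four-equation system, identify $v_n=\partial_x u_n$ and $z_n=\partial_x w_n$ at the finite-dimensional level, and use the resulting $H^1$ control to invoke Aubin. Two points, however, need correction.

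First, your compactness step is misdirected. You propose to apply Aubin to $v_n$, asserting that $v_n$ bounded in $L^2(0,T;L^2(\Omega))$ with $\partial_t v_n$ bounded in a negative space yields $v_n\to v$ strongly in $L^2$. But Aubin requires a \emph{compact} embedding $X_0\hookrightarrow X$, and $L^2(\Omega)\hookrightarrow L^2(\Omega)$ is not compact, so this fails. The correct route---and the one the paper takes---is to use the identification $v_n=\partial_x u_n$ \emph{first}, so that $u_n$ itself is bounded in $L^2(0,T;H^1(\Omega))$; Aubin then applies to $u_n$ via the compact embedding $H^1(\Omega)\hookrightarrow L^2(\Omega)$ and gives $u_n\to\bar u$ strongly in $L^2(0,T;L^2(\Omega))$, and likewise for $w_n$. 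Strong convergence of $v_n$ is neither needed nor available.

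Second, you omit a device the paper relies on heavily: before setting up the Galerkin scheme, the nonlinearities are composed with a smooth cutoff $\sigma$ satisfying $|\sigma|\le M$. This makes the Galerkin ODE globally Lipschitz, so Picard--Lindel\"of yields \emph{unique} solutions on an interval independent of $n$, and the energy estimates produce bounds depending only on $M$ rather than on the as-yet-uncontrolled $L^\infty$ norms of $u_n,w_n$. Only afterward are the smallness hypotheses and the embedding $H^1(\Omega)\hookrightarrow L^\infty(\Omega)$ used to show $|u_n|,|w_n|<M/2$, so that $\sigma$ is inactive and the truncated system coincides with the original. Your plan to close a Gronwall inequality directly from smallness is a bootstrap that can in principle be made rigorous, but it is circular without care since the local existence time of the untruncated ODE depends on $n$. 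The cutoff is also what drives the paper's proof that $v_n=\partial_x u_n$: one checks that $\partial_x u_n$ satisfies the same finite ODE as $v_n$ with the same initial data, and then invokes uniqueness of the Lipschitz ODE---precisely the mechanism you flag as the main obstacle.
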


Here, the space $V \sim L^2(\Omega)$ denotes the domain of the operator $K$, while $V'$  is its dual. A more complete definition of these spaces, together with a statement of Hypothesis \ref{hyp:kernel} and properties of the bilinear forms $B$ and $G$,  is provided in Subsection \ref{sub:notation}.

\begin{remark}
We close this introduction with some remarks regarding extensions of the above result.

\begin{enumerate}[i)]
\item As in \cite{cappanera2024analysis}, the modified Galerkin method presented here can also be used to prove the short-time existence of weak solutions for system \eqref{eqn:model-explain} subject to zero flux boundary constraints. In the case of the plant equation, where the spread of plant biomass is given by a nonlocal form of diffusion, this type of boundary conditions have to be modeled using nonlocal Neumann boundary constraints. 
Assuming that the convolution kernel, $\gamma$,  has compact support, this is equivalent to requiring
\[ K u = 0 \quad \mbox{for} \quad x \in \Omega_o = \tilde{\Omega} \setminus \Omega\]
where
\[ \tilde{\Omega} = \{ y \in \R \mid z= (x-y)  \in \supp \;\{ \gamma(z)\}, \quad  \forall x \in \Omega\}\]
In particular, one can show that the space 
\[ V_N = \{ u \in L^2(\tilde{\Omega}) \mid  K u (x) = 0 \quad x \in \Omega_o\}\]
which  represents the domain of the operator $K$, is isomorphic to $L^2(\Omega)$ (see \cite{burkovska2021, cappanera2024analysis}). Moreover, as shown in \cite{cappanera2024analysis} it is possible to construct a set of orthonormal basis functions for $V_N$, so that the method described in the following sections applies to this setting as well.

\item Because of the logistic term included in our nonlocal Klausmeier model, \eqref{eqn:model-explain}, we expect that solutions that start from nonnegative initial conditions remain positive and stay bounded. While we do not show that these properties hold,   it is  possible to use instead methods from semigroup theory to prove global existence of positive mild solutions belonging to $ L^\infty(\Omega) \times L^\infty(\Omega)$. See \cite{Philippe2023} for an example of this approach in the context of a nonlocal Gray-Scott model. We plan to address this in future work.

\item 
Given that the space $V_N$ is isomorphic to $L^2(\Omega)$, it is possible to show that the operator $K: V_N \longrightarrow L^2(\Omega)$ defines a semigroup. However, the lack of regularity of the domain again becomes an issue, as it precludes us from proving existence of mild solutions using the standard approach of combining properties of the semigroups together fixed point arguments. On the other hand, it is not clear if there is an appropriate dense subspace $D \subset L^\infty(\Omega)$ satisfying nonlocal Neumann boundary constraints such that the operator $K: D \subset L^\infty(\Omega)  \longrightarrow L^\infty(\Omega)$ defines a semigroup. As a result, to our knowledge, the modified Galerkin approach presented here seems to be the only method for showing existence solutions for nonlocal problems of the form \eqref{eqn:model-explain} in the case of zero flux boundary constraints.

\end{enumerate}
\end{remark}

We finish this section by establishing the notation used throughout the paper and stating our main assumptions.

\subsection{Notation and Model equations}
\label{sub:notation}

We consider the nonlocal Klausmeier model 
\begin{equation}
\label{eqn:model}
\begin{aligned}
u_{t} &= d K u + u^{2} w (1 - b u) - \mu u  ,  \\
w_{t} &= w_{xx} + \nu w_{x} + a - u^{2} w  - w   ,
\end{aligned}
\end{equation}
and assume that the parameters  \( a, b, d, \nu \) and \( \mu \) are positive. The map $K$ is then defined via the integral,
\[ K u = \int_\R (u(y) - u(x)) \gamma(x,y) \;dy\]
where the convolution kernel satisfies the following assumptions.

\begin{hypothesis}
\label{hyp:kernel}
The convolution kernel, \( \gamma(z)=\gamma(\abs{z}) \), is a radial function in \( H^{1}(\R) \cap W^{1,1}(\R) \), which is strictly greater than zero on all of \( \R \) and satisfies:
\begin{itemize}
\item \( \gamma(x,y) = \gamma(\abs{x-y}) = \gamma (\abs{z})\), and 

\item \( \int_{\R} z^{2} \gamma(z) \, dz < \infty \).

\end{itemize}
\end{hypothesis}

We pose equations \eqref{eqn:model} on a bounded domain $\Omega =[-L,L]$ and consider homogeneous Dirichlet type boundary conditions.
In particular, we choose 
\begin{align}
 \label{eqn:vc-plant}
 u &= 0 \quad \text{in } \Omega^{c}, \\ 
 \label{eqn:bc-water}
 w &= 0 \quad \text{on } \partial\Omega .
\end{align}

Then, due to the volume constraints imposed on the plant equation, we view the map $K: V \subset L^2(\R) \longrightarrow L^2(\R)$ as an operator with domain 
\begin{equation}
\label{eqn:V} 
V = \{ u \in L^{2} (\R) \mid u (x) = 0 \quad \text{for a.e. } x\in \Omega^{c} \} .
\end{equation}
and equip this space with the inner product from \( L^{2}(\R) \). Notice then that the space $V$ is isometrically isomorphic to $L^2(\Omega)$. That is, there exists a linear isometric isomorphism \( T: V \to L^{2}(\Omega) \).

{\bf Outline:} The rest of the paper is organized as follows: In Section 2, we describe the bilinear forms appearing in the weak formulation of the model (\ref{eqn:model-explain}) and some of their properties; in Section 3, we develop the modified Galerkin approach necessary to prove Theorem \ref{thm:main}. In Section 4, energy estimates that allow us to proceed with weak convergence of approximations to solutions are proved. Section 5 then contains details of this convergence, and we conclude with a proof of the main theorem, Theorem \ref{thm:main}. We close the paper by discussing our results and procedure and providing future directions.

\section{Preliminary Results}
\label{sec:2}

We briefly outline the contents of Chapter \ref{sec:2}. 
In Section 2.1, we establish \( K \) as a bounded linear operator on the space \( V \) (see equation (\ref{eqn:V})). Next, in Section 2.2, we define the bilinear forms associated with the linear operators \( K \) and \( \partial_{xx} + \nu \partial_{x} \) appearing in the model (\ref{eqn:model}) and give the results concerning boundedness and coercivity for the bilinear forms. 
Finally, we describe a relation satisfied by the bilinear form \( B[\cdot, \cdot] \) defined in (\ref{eqn:bilinear-form-plant}) that will be useful in proving energy estimates.

\subsection{Bilinear forms}
\label{sec:2_1}

\begin{definition}
We define the bilinear form \( B: V \times V \to \R \) using the expression 
\begin{equation}
\label{eqn:bilinear-form-plant}
B[u, \varphi] 
= 
\frac{1}{2} \int_{\R} \int_{\R} (u(y) -u(x))  \gamma(x,y) (\varphi(y) - \varphi(x)) \, dy \, dx  ,
\end{equation}
where \( \gamma \) satisfies Hypothesis \ref{hyp:kernel}.
\end{definition}

\begin{definition}
We define the bilinear form \( G : H \times H \to \R \) by
\begin{equation}
\label{eqn:bilinear-form-water}
G[w,\psi] 
=  
\int_{\Omega} w_{x} \psi_{x} - \nu w_{x} \psi  \, dx ,
\end{equation}
where $H$ is either $H^{1}_{0}(\Omega)$, or 
\begin{equation}
\label{eqn:W}
    H = W := \left\{ w \in H^{1}(\Omega) \mid \int_{\Omega} w \, dx = 0 \right\} .
\end{equation}
\end{definition}

\begin{remark}
 Although we pose the water equation on $\Omega$ with Dirichlet boundary conditions, because of the modified Galerkin system described in subsection \ref{sec:3_1}, we employ the bilinear form \( G[\cdot,\cdot] \) in the case of an equation satisfying Neumann boundary conditions. Note that the space $W$ is a Hilbert space with norm and inner product from $H^{1}(\Omega) $.
\end{remark}

\subsection{Properties of the bilinear forms}

The next three lemmas state that the bilinear forms \( G: H\times H \to \R \) and \( B : V\times V \to \R \), defined in (\ref{eqn:bilinear-form-water}) and (\ref{eqn:bilinear-form-plant}), respectively, are bounded and coercive. 

\begin{lemma}
\label{lem:bilinear-form-water}
There exist positive constants \( C \), \( \kappa \), and \( \chi \), depending on the positive parameter \( \nu \) appearing in the system (\ref{eqn:model}) and the domain \( \Omega \), satisfying
\begin{itemize}
\item[i)]
\( G [w,\psi] \leq C \norm{w}_{H^{1}_{0}(\Omega)} \norm{\psi}_{H^{1}_{0}(\Omega)} \),

\item[ii)]
\( \kappa \norm{w}_{H^{1}_{0}(\Omega)} ^{2} \leq G[w,w] + \chi \norm{w}_{L^{2}(\Omega)}^{2} \),
\end{itemize}
for all \( w, \psi \in H^{1}_{0}(\Omega) \). 
\end{lemma}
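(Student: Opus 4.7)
The plan is to treat the two bounds separately, since they are essentially direct estimates once one exploits the Dirichlet boundary condition and the product rule. Throughout, I would use that on $H^{1}_{0}(\Omega)$ the full $H^1$-norm and the seminorm $\|w_x\|_{L^2(\Omega)}$ are equivalent by the Poincaré inequality, and that the trace of any $w \in H^1_0(\Omega)$ vanishes at $x = \pm L$.

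For the boundedness statement (i), I would simply apply the Cauchy–Schwarz inequality to each of the two integrals in $G[w,\psi] = \int_\Omega w_x \psi_x \, dx - \nu \int_\Omega w_x \psi \, dx$. The first term is bounded by $\|w_x\|_{L^2}\|\psi_x\|_{L^2}$, and the second by $\nu \|w_x\|_{L^2}\|\psi\|_{L^2}$. Controlling each $L^2$-norm by the full $H^1_0$-norm then yields
\[
|G[w,\psi]| \le (1+\nu)\, \|w\|_{H^{1}_{0}(\Omega)} \|\psi\|_{H^{1}_{0}(\Omega)},
\]
so one may take $C = 1+\nu$.

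For the coercivity statement (ii), the key observation is that the advection term vanishes on the diagonal. Indeed, because $w \in H^{1}_{0}(\Omega)$ and $w_x w = \tfrac{1}{2}(w^2)_x$, an integration by parts gives
\[
\int_\Omega w_x w \, dx = \tfrac{1}{2}\bigl(w(L)^2 - w(-L)^2\bigr) = 0.
\]
Hence $G[w,w] = \|w_x\|_{L^2(\Omega)}^2$. To get a lower bound in terms of the full $H^1_0$-norm, I would then either (a) add and subtract $\|w\|_{L^2(\Omega)}^2$ to conclude $G[w,w] + \|w\|_{L^2(\Omega)}^2 \ge \|w\|_{H^{1}_{0}(\Omega)}^2$, which gives $\kappa = \chi = 1$, or (b) invoke the Poincaré inequality on $\Omega = [-L,L]$ to absorb the $L^2$ part entirely, yielding coercivity with $\chi = 0$ and $\kappa = (1+C_P^2)^{-1}$, where $C_P$ is the Poincaré constant.

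I do not expect any genuine obstacle here: the computation is standard and the only subtlety is ensuring that the advection term is handled without wasting derivatives, which is why using integration by parts is preferable to a Young-inequality splitting of $\nu w_x w$. The same argument will not work verbatim on the space $W$ from equation \eqref{eqn:W}, where a different identity must be used; that case is treated in a subsequent lemma.
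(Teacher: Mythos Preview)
Your argument is correct and complete; in fact, the paper does not give a proof of this lemma at all, deferring to the literature (specifically \cite{evans2022partial}) as the result is standard. Your treatment---Cauchy--Schwarz for boundedness and the integration-by-parts identity $\int_\Omega w_x w\,dx = 0$ on $H^1_0(\Omega)$ for coercivity---is precisely the standard argument the paper has in mind.
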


\begin{lemma}
\label{lem:bilinear-form-water-W}
There exist positive constants \( C \), \( \kappa^{\prime} \), and \( \chi^{\prime} \), depending on the positive parameter \( \nu \) appearing in the system (\ref{eqn:model}) and the domain \( \Omega \), satisfying
\begin{itemize}
\item[i)]
\( G [z, \rho] \leq C \norm{z}_{W} \norm{\rho}_{W} \),

\item[ii)]
\( \kappa^{\prime} \norm{z}_{W} ^{2} \leq G[z,z] + \chi^{\prime} \norm{z}_{L^{2}(\Omega)}^{2} \),
\end{itemize}
for all \( z, \rho \in W \). 
\end{lemma}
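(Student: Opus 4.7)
The plan is to mirror the argument that presumably establishes Lemma \ref{lem:bilinear-form-water}, replacing the standard Poincaré inequality (valid on $H^1_0(\Omega)$) with the Poincaré--Wirtinger inequality (valid on $W$). Since the difference between the $H^1_0$-setting of Lemma \ref{lem:bilinear-form-water} and the $W$-setting here is only the boundary/zero-mean condition, and both give a Poincaré-type control of the $L^2$-norm by the $L^2$-norm of the derivative, the same algebraic manipulations should go through with adjusted constants.

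For boundedness (i), I would simply apply Cauchy--Schwarz to each of the two terms in $G[z,\rho]$:
\begin{equation*}
|G[z,\rho]| \le \norm{z_x}_{L^2(\Omega)} \norm{\rho_x}_{L^2(\Omega)} + \nu \norm{z_x}_{L^2(\Omega)} \norm{\rho}_{L^2(\Omega)},
\end{equation*}
and bound each factor by the corresponding $W$-norm, collecting $C = 1+\nu$ (or any convenient upper bound depending on $\nu$ and $\Omega$).

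For the Gårding-type inequality (ii), I would compute
\begin{equation*}
G[z,z] = \norm{z_x}_{L^2(\Omega)}^2 - \nu \int_\Omega z_x z \, dx,
\end{equation*}
and control the cross term via Young's inequality with a small parameter $\eps$, yielding
\begin{equation*}
G[z,z] \ge (1-\eps)\norm{z_x}_{L^2(\Omega)}^2 - \tfrac{\nu^2}{4\eps}\norm{z}_{L^2(\Omega)}^2.
\end{equation*}
Choosing for instance $\eps = 1/2$ gives $\tfrac{1}{2}\norm{z_x}_{L^2}^2 \le G[z,z] + \tfrac{\nu^2}{2}\norm{z}_{L^2}^2$.

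The key step, and the only place the definition of $W$ is used in an essential way, is to upgrade the resulting bound on $\norm{z_x}_{L^2(\Omega)}$ to a bound on $\norm{z}_W$. Here the zero-mean condition $\int_\Omega z\,dx = 0$ built into $W$ lets me invoke the Poincaré--Wirtinger inequality $\norm{z}_{L^2(\Omega)} \le C_{\mathrm{PW}} \norm{z_x}_{L^2(\Omega)}$, so that $\norm{z}_W^2 \le (1+C_{\mathrm{PW}}^2) \norm{z_x}_{L^2(\Omega)}^2$. Combining this with the previous display produces $\kappa'\norm{z}_W^2 \le G[z,z] + \chi'\norm{z}_{L^2(\Omega)}^2$ for explicit constants $\kappa' = [2(1+C_{\mathrm{PW}}^2)]^{-1}$ and $\chi' = \nu^2/2$. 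I don't expect any real obstacle here; the only subtlety worth verifying is that the Poincaré--Wirtinger constant $C_{\mathrm{PW}}$ on the bounded interval $\Omega = [-L,L]$ is indeed finite (which is standard), so the argument is essentially a constant-tracking exercise parallel to Lemma \ref{lem:bilinear-form-water}.
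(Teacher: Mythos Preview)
Your proposal is correct and is exactly the standard argument the paper has in mind; in fact the paper omits the proof entirely, stating that it ``is standard and found in the literature (see, e.g., \cite{evans2022partial}).'' One minor simplification you may want to note: the Poincar\'e--Wirtinger inequality is not strictly needed for part (ii), since from $\tfrac{1}{2}\norm{z_x}_{L^2}^2 \le G[z,z] + \tfrac{\nu^2}{2}\norm{z}_{L^2}^2$ you can simply add $\tfrac{1}{2}\norm{z}_{L^2}^2$ to both sides to obtain $\tfrac{1}{2}\norm{z}_{W}^2 \le G[z,z] + \tfrac{\nu^2+1}{2}\norm{z}_{L^2}^2$ directly, without ever invoking the zero-mean condition; your route via Poincar\'e--Wirtinger is of course also valid and yields a slightly different pair of constants.
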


The proofs of Lemmas \ref{lem:bilinear-form-water}
 and \ref{lem:bilinear-form-water-W} are standard and found in the literature (see, e.g., \cite{evans2022partial}). We omit them here.
 
\begin{lemma}
\label{lem:bilinear-form-plant}
There exist positive constants \( C \) and \( \theta \) satisfying
\begin{itemize}
\item[i)] \( B [u,\varphi] \leq C \norm{u}_{L^{2}(\Omega)} \norm{\varphi}_{L^{2}(\Omega)} \), 

\item[ii)] \( B [u,u] \geq \theta \norm{u}_{L^{2}(\Omega)}^{2} \),
\end{itemize}
for all \( u, \varphi \in V \).
\end{lemma}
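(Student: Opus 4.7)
My plan for Lemma \ref{lem:bilinear-form-plant} is to reduce both estimates to the standard Young inequality $\norm{\gamma * u}_{L^{2}(\R)} \le \norm{\gamma}_{L^{1}(\R)} \norm{u}_{L^{2}(\R)}$, combined with the fact that the volume constraint built into $V$ forces $u \equiv 0$ on $\Omega^{c}$, which lets me identify $\norm{u}_{L^{2}(\R)} = \norm{u}_{L^{2}(\Omega)}$.

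First I would expand the product $(u(y)-u(x))(\varphi(y)-\varphi(x))$ into four terms, apply Fubini, and use the symmetry $\gamma(x,y) = \gamma(\abs{x-y})$ and the identity $\int_{\R}\gamma(x-y)\,dy = \norm{\gamma}_{L^{1}(\R)}$ to collapse the double integral into
\[
 B[u,\varphi] \;=\; \norm{\gamma}_{L^{1}(\R)}\,(u,\varphi)_{L^{2}(\R)} \;-\; (\gamma * u,\varphi)_{L^{2}(\R)}.
\]
For (i), I bound the first term by Cauchy--Schwarz and the second by Cauchy--Schwarz followed by Young's inequality; since $u,\varphi\in V$ are supported in $\Omega$, both inner products become $L^{2}(\Omega)$ inner products, yielding $C = 2\norm{\gamma}_{L^{1}(\R)}$.

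For (ii), the key observation is that the volume constraint generates an extra positive term. Setting $\varphi=u$ in the formula above and splitting $\norm{\gamma}_{L^{1}(\R)} = \int_{\Omega}\gamma(x-y)\,dy + \int_{\Omega^{c}}\gamma(x-y)\,dy$, then symmetrizing the inner double integral via $\gamma(x-y)=\gamma(y-x)$, I expect to obtain
\[
 B[u,u] \;=\; \tfrac{1}{2}\int_{\Omega}\!\!\int_{\Omega} (u(x)-u(y))^{2}\gamma(x-y)\,dy\,dx \;+\; \int_{\Omega} u(x)^{2}\,\beta(x)\,dx,
\]
where $\beta(x) := \int_{\Omega^{c}}\gamma(x-y)\,dy$. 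Both terms are nonnegative, so dropping the first reduces the task to establishing a uniform lower bound on $\beta$ over $\Omega$.

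The main obstacle — and the only place where the strict positivity assumption in Hypothesis \ref{hyp:kernel} is essential — is showing $\theta := \inf_{x\in\Omega}\beta(x) > 0$. A change of variables $z = x-y$ rewrites $\beta(x) = \int_{-\infty}^{x-L}\gamma(z)\,dz + \int_{x+L}^{\infty}\gamma(z)\,dz$, which is a continuous function of $x$ (by dominated convergence, using $\gamma\in L^{1}(\R)$). Since $\gamma>0$ pointwise on $\R$, each tail is strictly positive for every $x\in\Omega=[-L,L]$, so $\beta(x)>0$ on the compact interval $\Omega$, and continuity gives a positive minimum $\theta$. This yields $B[u,u] \ge \theta\,\norm{u}_{L^{2}(\Omega)}^{2}$, finishing the proof.
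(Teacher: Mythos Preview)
Your argument is correct. Part (i) essentially matches the paper's proof: both reduce to the identity $B[u,\varphi]=-(Ku,\varphi)_{L^{2}(\Omega)}$ (your expanded formula is exactly this with $Ku=\gamma*u-\Gamma u$, $\Gamma=\norm{\gamma}_{L^{1}(\R)}$), followed by Cauchy--Schwarz and Young's convolution inequality.

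Part (ii), however, takes a genuinely different and more elementary route than the paper. The paper proceeds functional-analytically: it writes $-K=\Gamma I-\gamma*\cdot$ as a compact (Hilbert--Schmidt) perturbation of a multiple of the identity, invokes Fredholm theory to get index zero, checks $\ker K=\{0\}$ using strict positivity of $\gamma$ together with the volume constraint, deduces bounded invertibility of $K$ via the Open Mapping Theorem, and then argues by contradiction that a sequence with $\norm{u_n}=1$ and $B[u_n,u_n]\to 0$ would force $\norm{Ku_n}\to 0$, violating $\norm{u_n}\le C\norm{Ku_n}$. You instead isolate the exterior-mass term $\int_\Omega u^{2}\beta\,dx$ directly from the quadratic form and obtain the explicit constant $\theta=\min_{x\in\Omega}\beta(x)>0$ by continuity of $\beta=\gamma*\mathbf{1}_{\Omega^{c}}$ on the compact interval $\Omega$ (continuity follows since $\gamma\in L^{1}(\R)$ and $\mathbf{1}_{\Omega^{c}}\in L^{\infty}(\R)$). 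Your proof is shorter, produces an explicit coercivity constant, and avoids operator-theoretic machinery; the paper's proof has the advantage of establishing invertibility of $K$ on $V$ as a byproduct, a structural fact of independent interest even though it is not invoked elsewhere in the paper.
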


\begin{proof}
Since $\gamma (z) \in L^1(\R)$, using Young's inequality for integrals we have that the operator $K: V \longrightarrow L^2(\R)$  is bounded. 

Below, we state Lemma \ref{lem:formula-inner-product-bilinear}, which gives us the identity \( B[u,\varphi] = - (K u, \varphi)_{L^{2}(\Omega)} \). Then, by Cauchy-Schwarz:
\begin{align*}
B[u,\varphi] &\leq \norm{Ku}_{L^{2}(\Omega)} \norm{\varphi}_{L^{2}(\Omega)} \\ 
&\leq C \norm{u}_{L^{2}(\Omega)} \norm{\varphi}_{L^{2}(\Omega)} ,
\end{align*}
where \( C = \norm{K}_{\text{op}} = \sup_{\norm{u}_{L^{2}(\Omega)}=1} \norm{Ku}_{L^{2}(\Omega)} < \infty \) is the operator norm of \( K \).

To show that \( B \) is coercive, we first show that \( - K \) is invertible and then argue via a proof by contradiction.
Then, because of Hypothesis \ref{hyp:kernel}, we have 
\begin{align*}
\int_{\Omega} \int_{\Omega} \abs{ \gamma(x,y) }^{2} \, dy \, dx &\leq \int_{\Omega} \int_{\R} \abs{ \gamma(x,y) }^{2} \, dy \, dx \\ 
&= \abs{\Omega} \norm{\gamma}_{L^{2}(\R)}^{2} < \infty .
\end{align*}
This shows \( \gamma \in L^{2}(\Omega\times \Omega) \), and it follows that the convolution with \( \gamma \) is a Hilbert-Schmidt operator on \( L^{2}(\Omega) \), hence compact. 

Therefore, writing \( - K u = \Gamma u - \gamma * u \), we see that \( - K \) is a compact perturbation of a constant multiple of the identity. Since the identity is a Fredholm operator with index zero and Fredholm properties are preserved under compact perturbations, we conclude that \( - K \) is also Fredholm with index zero. We now show that the nullspace of \( K \) is trivial; this, together with the above, implies invertibility.

Assume that there exists a function \( u \in V \) satisfying \( K u= 0 \). This implies \( -\left( Ku, u \right)_{L^{2}(\Omega)} = 0 \). On the other hand, using Lemma \ref{lem:formula-inner-product-bilinear},
\begin{align*}
 - \left( K u , u \right)_{L^{2}(\Omega)} &= \int_{\R} \int_{\R} \left( u(x)-u(y)  \right)^{2} \gamma(x,y) \, dy \, dx = 0 .
\end{align*}
By Hypothesis \ref{hyp:kernel}, we have that \( \gamma(x,y) > 0 \) for all \( (x,y) \in \R^{2} \).
In particular, the above computation shows that one must have \( \left( u(x)-u(y) \right)^{2} = 0 \) for a.e. \( (x,y) \in \R^{2} \), and it then follows that \( u \) is constant a.e. in \( \R \). Then since \( u = 0 \) on \( \Omega^{c} \), we have that \( u =0 \) a.e. in \( \R \) is a necessary condition for \( Ku = 0 \). Hence, \( \dim \ker K = 0 \) and also \( \dim \coker K = 0 \), and \( K \) is invertible.

Then by the Open Mapping Theorem, the inverse of \( K \) is bounded in \( L^{2}(\Omega) \). Hence, there exists \( C >0 \) such that \( \|K^{-1} f \| \leq C \|f\| \), which is equivalent to \( \norm{u} \leq C\| Ku\| \) provided \( Ku = f \).

Now assume that \( B \) is not coercive. Namely, suppose there exists a sequence \( (u_{n}) \subset L^{2}(\Omega) \) satisfying \( \norm{u_{n}}=1 \) and \( B[u_{n},u_{n}] \to 0 \). 

To derive a contraction, first note that by Cauchy-Schwarz,
\begin{align*}
B[u,\varphi] &= \frac{1}{2} \int_{\R} \int_{\R} (u(y) -u(x)) (\varphi(y) - \varphi(x)) \gamma(x,y) \, dy \, dx \\ 
&= \frac{1}{2} \int_{\R} \int_{\R}  \left( (u(y) -u(x)) \left[ \gamma(x,y) \right]^{1/2} \right) \left( (\varphi(y) - \varphi(x)) \left[ \gamma(x,y) \right]^{1/2} \right) \, dy \, dx  \\
&\leq \frac{1}{2}\left(  \int_{\R} \int_{\R}  \left( (u(y) -u(x))^{2} \gamma(x,y)  \right) \, dy \, dx  \right)^{1/2}
\\ 
&\qquad \cdot \left( \int_{\R} \int_{\R} \left( (\varphi(y) - \varphi(x))^{2}  \gamma(x,y) \right)   \, dy \, dx  \right)^{1/2} \\
&= \frac{1}{2} \left[ B[u,u] \right]^{1/2} \left[ B[\varphi, \varphi] \right]^{1/2} .
\end{align*}

Then 
\begin{align*}
(K u_{n}, K u_{n}) 
&= \norm{K u_{n}}^{2} \\
&= B[ -u_{n}, K u_{n}] \\
&\leq \frac{1}{2} \left[ B[u_{n},u_{n}] \right]^{1/2} \left[ B[K u_{n}, K u_{n}] \right]^{1/2} .
\end{align*}
This implies that
\begin{align*}
\norm{K u_{n}}^{2} &\leq 
\frac{1}{2} \left[ B[u_{n},u_{n}] \right]^{1/2} 
\|K u_{n} \| \\ 
\norm{K u_{n}} &\leq 
\frac{1}{2} \left[ B[u_{n},u_{n}] \right]^{1/2} 
\end{align*}
Therefore, \( K u_{n} \to 0 \) in \( L^{2}(\Omega) \) because \( B[u_{n}, u_{n}] \to 0 \). 

Then
\[
1 = \norm{u_{n}} \leq \norm{K u_{n}} \to 0,
\] 
a contradiction. We conclude the existence of a constant \( \theta > 0 \) satisfying 
\[
B [u,u] \geq \theta \norm{u}_{L^{2}(\Omega)}^{2},
\]
and consequently, that \( B \) is a coercive bilinear form.

\end{proof}

In proving energy estimates, it is convenient to have the following relation:
\begin{equation}
\label{eqn:bilinear-form-plant-ip}
B[u,u] = - \left( K u, u \right)_{L^{2}(\Omega)} ,
\end{equation}
For this, we have the following lemma.
\begin{lemma}
\label{lem:formula-inner-product-bilinear}
The equality
\begin{equation*}
- \left( Ku, \varphi \right)_{L^{2}(\Omega)}  = \frac{1}{2} \int_{\R} \int_{\R} \left( u(y) - u(x) \right) \gamma(x,y) \left( \varphi(y) - \varphi(x) \right) \, dy \, dx 
\end{equation*}
holds for all \( u, \varphi \in V \), where \( V \) is defined in (\ref{eqn:V}).
\end{lemma}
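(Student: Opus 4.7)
The plan is a short, direct computation: I unfold the definition of $Ku$, use that $u,\varphi\in V$ vanish on $\Omega^c$ to promote the outer integration from $\Omega$ to all of $\R$, and then symmetrize the resulting double integral by swapping $x \leftrightarrow y$ and invoking the symmetry $\gamma(x,y)=\gamma(y,x)$ from Hypothesis \ref{hyp:kernel}.

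Concretely, since $\varphi$ is supported in $\Omega$, I would first write
\[
I := -(Ku,\varphi)_{L^{2}(\Omega)} = -\int_{\R}\!\!\int_{\R} (u(y)-u(x))\,\gamma(x,y)\,\varphi(x)\,dy\,dx = \int_{\R}\!\!\int_{\R}(u(x)-u(y))\,\gamma(x,y)\,\varphi(x)\,dy\,dx.
\]
To justify exchanging the order of integration, I would check absolute integrability: since $\gamma\in L^{1}(\R)$ and $u,\varphi\in L^{2}(\R)$, Young's convolution inequality gives $|u|\ast\gamma\in L^{2}(\R)$ with $\||u|\ast\gamma\|_{L^{2}}\leq \|\gamma\|_{L^{1}}\|u\|_{L^{2}}$, whence
\[
\int_{\R}\!\!\int_{\R}|u(y)-u(x)|\,\gamma(x,y)\,|\varphi(x)|\,dy\,dx \leq 2\,\|\gamma\|_{L^{1}}\|u\|_{L^{2}}\|\varphi\|_{L^{2}} < \infty.
\]
Fubini then applies. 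Relabeling the dummy variables $x\leftrightarrow y$ and using $\gamma(y,x)=\gamma(x,y)$, I obtain the alternative representation
\[
I = \int_{\R}\!\!\int_{\R}(u(y)-u(x))\,\gamma(x,y)\,\varphi(y)\,dx\,dy.
\]

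Averaging these two equal expressions for $I$ and observing the algebraic identity
\[
(u(x)-u(y))\,\varphi(x) + (u(y)-u(x))\,\varphi(y) = (u(y)-u(x))(\varphi(y)-\varphi(x))
\]
yields exactly the symmetric kernel appearing in the bilinear form $B[u,\varphi]$, completing the proof. I do not expect any significant obstacle here; the only nontrivial point is the Fubini justification, which is immediate from $\gamma \in L^{1}(\R)$ and $u,\varphi \in V \subset L^{2}(\R)$.
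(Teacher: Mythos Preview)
Your proof is correct. The paper actually omits a proof of this lemma, but the symmetrization argument you give---extend the outer integral to $\R$ using $\varphi\in V$, justify Fubini via $\gamma\in L^{1}(\R)$ and Young's inequality, swap $x\leftrightarrow y$ using $\gamma(x,y)=\gamma(y,x)$, and average---is exactly the computation the paper carries out later in the proof of Lemma~\ref{lem:weak-convergence-linear-terms} when manipulating $B[u^{m},v]$, so your approach is fully in line with the paper's methods.
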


In particular, the operator \( -K \) is self-adjoint on \( V \sim L^{2}(\Omega) \).
Moreover, thanks to Lemma \ref{lem:bilinear-form-plant}, we also have that  \( -K \) is positive definite.

\section{Galerkin Method}
\label{sec:3}

We seek to prove the existence of weak solutions of the nonlocal Klausmeier model (\ref{eqn:model}).
Thus, we seek a time \( T >0 \) and a tuple
\[
 (u,w) \in L^{2}\left( 0,T, V \right)  \times  L^{2}\left( 0,T, H^{1}_{0}(\Omega) \right) 
\]
with 
\[
 (u_{t},w_{t}) \in  L^{2}\left( 0,T,V^{\prime} \right) \times  L^{2}\left( 0,T,H^{-1}(\Omega) \right) 
\] 
satisfying
\begin{equation}
\label{eqn:model-weak}
\begin{aligned}
\left< u_{t}, \varphi \right>_{(V^{\prime}\times V)} + d B[u,\varphi] + \mu \left( u, \varphi \right)_{L^{2}(\Omega)} &= 
\left( u^{2}w (1-b u), \varphi \right)_{L^{2}(\Omega)} , \\
\left< w_{t}, \psi \right>_{(H^{-1}(\Omega) \times H^{1}_{0}(\Omega))} + G[w, \psi] + \left( w, \psi \right)_{L^{2}(\Omega)} &= 
\left(a - u^{2} w, \psi \right)_{L^{2}(\Omega)} ,
\end{aligned}
\end{equation}
for each \( (\varphi, \psi) \in V \times H^{1}_{0}(\Omega) \) and a.e. time \( 0 \leq t < T \), as well as the initial conditions 
\[
u(x,0) = u_{0}(x), \quad
w(x,0) = w_{0}(x) ,
\]
for a.e. \( x \in \Omega \).

Our approach relies on the Galerkin method and Aubin's Compactness Theorem, which we state next. See \cite{aubin1963} for a proof of this theorem.

\begin{theorem}[Aubin's Compactness Theorem]
\label{thm:aubin}
Let \( X_{0}, X, X_{1} \) be three Banach spaces such that 
\[
X_{0} \subset X \subset X_{1},
\]
where the injections are continuous, \( X_{0} \) and \( X_{1} \) are both reflexive, and the injection \( X_{0} \subset X \) is compact. Let \( T>0 \) be a fixed finite number, let \( 1 < p, q < \infty \), and define 
\[
\mathcal{Y} := \left\{ u \in L^{p} \left(0,T; X_{0}\right) : u' = \frac{du}{dt} \in L^{q} (0,T; X_{1}) \right\}.
\]
Then, \( \mathcal{Y} \) is a Banach space when it is equipped with the norm 
\[
\norm{u}_{\mathcal{Y}} = \norm{u}_{L^{p}(0,T;X_{0})} + \|u'\|_{L^{q}(0,T;X_{1})}.
\]
Furthermore, the injection \( \mathcal{Y} \) into \( L^{q}(0,T; X) \) is compact.
\end{theorem}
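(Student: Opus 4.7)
The proof of Aubin's compactness theorem rests on two ingredients: an interpolation inequality (Ehrling's lemma) encoding the compact embedding $X_0 \hookrightarrow X$, and a time-translation estimate supplied by the derivative bound. First I would prove Ehrling's lemma: for every $\varepsilon > 0$ there exists $C_\varepsilon > 0$ with
\[ \|v\|_X \leq \varepsilon \|v\|_{X_0} + C_\varepsilon \|v\|_{X_1} \quad \text{for all } v \in X_0, \]
by contradiction, since a counterexample produces $(v_n) \subset X_0$ with $\|v_n\|_X = 1$, $\|v_n\|_{X_0}$ bounded, and $\|v_n\|_{X_1} \to 0$; compactness of $X_0 \hookrightarrow X$ extracts a strong limit $v$ in $X$, and continuity of $X \hookrightarrow X_1$ forces $v = 0$, contradicting $\|v\|_X = 1$. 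The Banach-space property of $\mathcal{Y}$ then follows from completeness of the Bochner spaces together with a routine distributional identification of limiting derivatives.

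For compactness of $\mathcal{Y} \hookrightarrow L^q(0,T;X)$, I would take a bounded sequence $(u_n) \subset \mathcal{Y}$. Reflexivity of $X_0$ and $X_1$ together with $1 < p, q < \infty$ makes $L^p(0,T;X_0)$ and $L^q(0,T;X_1)$ reflexive, so along a subsequence $u_n \rightharpoonup u$ in $L^p(0,T;X_0)$ and $u_n' \rightharpoonup u'$ in $L^q(0,T;X_1)$; subtracting $u$, I may assume the weak limit is zero. Applying Ehrling pointwise in $t$ and integrating yields
\[ \|u_n\|_{L^q(0,T;X)} \leq \varepsilon \|u_n\|_{L^q(0,T;X_0)} + C_\varepsilon \|u_n\|_{L^q(0,T;X_1)}, \]
where the first term is uniformly $O(\varepsilon)$ on the bounded interval $(0,T)$ (via $L^p \hookrightarrow L^q$ when $p \geq q$, or via interpolation with the embedding $\mathcal{Y} \hookrightarrow C([0,T]; X_1)$ derived from the $L^q$ bound on $u_n'$ otherwise). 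Passing $\varepsilon \to 0$ reduces the theorem to showing strong convergence $u_n \to 0$ in $L^q(0,T;X_1)$.

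The principal obstacle is this last step, which I would handle by the Riesz--Fr\'echet--Kolmogorov criterion. The derivative bound yields the equicontinuity estimate
\[ \|u_n(\cdot + h) - u_n(\cdot)\|_{L^q(0,T-h;X_1)} \leq h\, \|u_n'\|_{L^q(0,T;X_1)} \leq Ch, \]
while the pointwise-in-$t$ bound in $X_0$ combined with the compact inclusion $X_0 \hookrightarrow X \hookrightarrow X_1$ supplies tightness of $\{u_n(t)\}$ in $X_1$. Together these give relative compactness in $L^q(0,T;X_1)$; combined with the weak convergence, relative compactness upgrades to strong convergence to zero. The delicate point is reconciling the pointwise-in-time $X_0$ bound with the integrated translation estimate in $X_1$ so that the Fr\'echet--Kolmogorov hypotheses can be verified rigorously; in practice one first works with smooth representatives (mollifying in time) and extends by a density argument, which also clarifies why the gap between $p$ and $q$ in the interpolation step causes no essential problem on the bounded interval $(0,T)$.
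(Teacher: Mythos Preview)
The paper does not prove this theorem; it simply cites Aubin's original article \cite{aubin1963} and treats the result as a black box. So there is no ``paper's own proof'' to compare against.

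Your sketch follows the standard Aubin--Lions--Simon route (Ehrling's lemma, then reduction to strong convergence in $L^q(0,T;X_1)$, then a Kolmogorov-type translation argument) and is essentially correct. One point deserves tightening: in the Fr\'echet--Kolmogorov step you invoke a ``pointwise-in-$t$ bound in $X_0$'' to obtain tightness of $\{u_n(t)\}$ in $X_1$. You do not actually have such pointwise bounds from $u_n \in L^p(0,T;X_0)$ alone. The usual remedy (Simon's version) is to work instead with time-averages $\frac{1}{h}\int_t^{t+h} u_n(s)\,ds$, which \emph{are} bounded in $X_0$ by H\"older and hence relatively compact in $X_1$; combined with your translation estimate this yields the compactness criterion rigorously. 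You already flag this as ``the delicate point,'' so you are aware of the gap---just be explicit that averages, not pointwise values, carry the $X_0$ bound.
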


Notice that if $V = H^1(\Omega)$,  energy estimates would bound the sequence of Galerkin approximations \( u^{m} \) in \( X_0 = H^{1}(\Omega) \) for every $t \in [0,T]$. Aubin's Theorem then allows us to infer the existence of a  subsequence, \( u^{m_{l}} \), that  converges strongly in \(L^2(0,T;  X ) \) with \(X= L^{2}(\Omega) \). From this one can then show point-wise convergence of a subsequence followed by the Dominated Convergence theorem to finally arrive at the weak convergence of the nonlinear terms. 

In our equations, however, we have that $V = L^2(\Omega)$, so that we are only able to bound the sequence of Galerkin approximations $u^m$ in $L^2(\Omega)$. Consequently, our problem does not satisfy the assumptions of Theorem \ref{thm:aubin}.
To gain extra regularity for the approximate solutions $u^m$, we append two well-chosen additional equations to our nonlocal Klausmeier model for variables $v$ and $z$. We then show that the Galerkin approximations to this augmented system of four equations satisfy $v^m = \partial_x u^m$ and $z^m = \partial_x w^m$. Energy estimates then prove that the sequences $\{ u^{m} \}$ and $\{\partial_{x} u^{m} \}$ are bounded in $L^2(\Omega)$, allowing us to invoke Aubin's theorem.

\subsection{Augmented system}
\label{sec:3_1}

Consider the following system of four equations on the variable $u,w,v,z$,

\begin{equation}
\label{eqn:model-aug}
\begin{aligned}
 u_{t} &= d K u - \mu u + P^{(1)}(u,w) \\ 
 w_{t} &= w_{xx} + \nu w_{x}  - w  + P^{(2)}(u,w) \\
 v_{t} &= d J u - ( \mu+ d \Gamma ) v  +Q^{(1)}(u,w,v,z) \\ 
 z_{t} &= z_{xx} + \nu z_{x} - z + Q^{(2)}(u,w,v,z), \\ 
 \end{aligned}
\end{equation}
where 
\begin{equation}
\label{eqn:J}
 J u =
  \int_{\R} \left( u(y) - u(x) \right) \partial_{x} \gamma(x,y) \, dy, 
  \end{equation}
\[ \Gamma = \int_\R \gamma(x,y)\;dy =  \int_\R \gamma(z)\;dz,\]
and the nonlinear terms are given by

\begin{align}
 \label{eqn:nonlinear-P1} 
P^{(1)}(u,w) & =   \sigma(u)^{2} \sigma(w) ( 1-b \sigma(u) ),\\
 \label{eqn:nonlinear-P2} 
P^{(2)}(u,w)& =a - \sigma(u)^{2} \sigma(w)\\
 \label{eqn:nonlinear-Q1} 
 Q^{(1)}(u,w,v,z) &=
 \sigma(u) \sigma^{\prime}(u) \sigma(w) (2 -3 b\sigma(u)) v + \sigma(u)^{2} \sigma^{\prime}(w) (1- b\sigma(u)) z , \\ 
 \label{eqn:nonlinear-Q2}
 Q^{(2)}(u,w,v,z) &= 
 - 2 \sigma(u) \sigma^{\prime}(u) \sigma(w) v - \sigma(u)^{2} \sigma^{\prime}(w) z .
\end{align}

Here the function \( \sigma \in C^{\infty}(\R) \) is a cutoff function 
defined by the expression
\begin{equation}
\label{eqn:sigma}
\sigma(x) = 
\begin{cases}
-M & \text{if } x < - M, \\ 
x & \text{if } \abs{x} \leq M/2, \\ 
M & \text{if } x > M .
\end{cases}
\end{equation}
for some $M>0$, and satisfying  \( 0 \leq \sigma'(x) \leq 1 \) and  \( \abs{\sigma(x)} \leq M \) for all \( x \in \R \).

Equations \ref{eqn:model-aug} are supplemented by 
initial the initial conditions,
\begin{align*}
 u_0, \qquad v_{0} &= \partial_{x} u_{0} \quad \text{on } \Omega \times \{t=0\}, \\ 
 w_0, \qquad z_{0} &= \partial_{x} w_{0} \quad \text{on } \Omega \times \{t=0\},
\end{align*}
and boundary conditions/volume constraints 
\begin{align*}
 u& =0, \quad v = 0 \quad \text{in } \Omega^{c} \times [0,T), \\ 
w& =0, \quad z_{x} = 0 \quad \text{on } \partial\Omega \times [0,T).
\end{align*}
In particular, notice that we choose homogeneous nonlocal Dirichlet volume constraints and homogeneous Neumann boundary conditions for the \( v \) and \( z \) equations, respectively.

The derivation of system \eqref{eqn:model-aug} follows from composing the nonlinear terms in the nonlocal Klausmeier model \eqref{eqn:model} with the cutoff function $\sigma$, giving the first and second equations in the system. Then, differentiating these equations with respect to the spatial variable \( x \) and letting \( v = u_{x} \) and \( z = w_{x} \) gives us the third and fourth equations.
 In particular, we have that
\begin{align}
 \label{eqn:Ju-Ku-derivative}
 J u - \Gamma v &= \partial_{x} \left( K u \right) = \partial_{x} \int_{\R} \left( u(y)-u(x) \right) \gamma(x,y) \, dy   , \\
 \label{eqn:Q1-P1-derivative}
 Q^{(1)}(u,w,v,z) &= \partial_x P^{(1)}(u,v)=  \partial_{x} \left( \sigma(u)^{2} \sigma(w) (1- b \sigma(u)) \right) ,  \\
 \label{eqn:Q2-P2-derivative}
 Q^{(2)}(u,w,v,z) &= \partial_x P^{(2)} (u,v) = \partial_{x} \left( a - \sigma(u)^{2} \sigma(w) \right) .
\end{align}

Notice that  if both \( u \) and \( w \) are uniformly bounded by a positive constant \( C < M / 2 \), the first two equations in (\ref{eqn:model-aug}) reduce to our original Klausmeier model (\ref{eqn:model}). This follows from the definition of the function $\sigma$. The reason for adding this cutoff function is so that we can guarantee that the coefficients of the Galerkin approximations define a system of ODE with a Lipschitz continuous vector field. This will then allow us to obtain unique solutions and thus conclude that $v^m = \partial_x u^m$ and $z^m = \partial_x w^m$.

\subsection{Weak formulation for the augmented system}

We seek a time \( T > 0 \) and a tuple
\[
 (u,w,v,z) \in  L^{2}\left( 0,T, V \right)  \times L^{2}\left( 0,T, H^{1}_{0}(\Omega) \right) \times  L^{2}\left( 0,T, V \right) \times  L^{2}\left( 0,T, H^{1}(\Omega) \right)
\]
with 
\[
 (u_{t},w_{t},v_{t},z_{t}) \in \Big[ L^{2}\left( 0,T,V^{\prime} \right)  \times L^{2}\left( 0,T,H^{-1}(\Omega) \right) \Big]^{2} ,
\]
satisfying the system
\begin{equation}
\label{eqn:model-aug-weak}
\begin{aligned}
 \left< u_{t}, \varphi \right>_{(V^{\prime}\times V)} + d B[u,\varphi] + \mu \left( u, \varphi \right)_{L^{2}(\Omega)} &= \left( P^{(1)}(u,w) , \varphi \right)_{L^{2}(\Omega)}  \\ 
 \left< w_{t}, \psi \right>_{(H^{-1}(\Omega) \times H^{1}_{0}(\Omega))} + G[w, \psi] + \left( w, \psi \right)_{L^{2}(\Omega)} &= \left( P^{(2)}(u,w) , \psi \right)_{L^{2}(\Omega)} , \\
 \left< v_{t}, \varphi \right>_{(V^{\prime}\times V)} + ( \mu + d \Gamma ) \left(  v ,\varphi \right)_{L^{2}(\Omega)} &= d \left( J u, \varphi \right)_{L^{2}(\Omega)} + \left( Q^{(1)}(u,w,v,z), \varphi \right)_{L^2(\Omega)} \\ 
 \left< z_{t}, \rho \right>_{(H^{-1}(\Omega) \times H^{1}_{0}(\Omega))} + G[z,\rho] + \left( z, \rho \right)_{L^{2}(\Omega)} &= 
 \left(Q^{(2)}(u,w,v,z),  \rho \right)_{L^2(\Omega)},
\end{aligned}
\end{equation}
for each \( (\varphi, \psi, \rho) \in V \times H^{1}_{0}(\Omega) \times H^{1}(\Omega) \) and a.e. time \( 0\leq t < T \), with initial conditions 
\[
 u(x,0) = u_{0}(x), \quad 
 w(x,0) = w_{0}(x), \quad
 v(x,0) = \partial_{x} u_{0}(x), \quad 
 z(x,0) = \partial_{x} w_{0}(x), 
\]
for a.e. \( x \in \Omega \). 
As a starting point, we do not relate the functions \( v \) and \( z \) in any way to the spatial derivatives \( \partial_{x} u \) and \( \partial_{x} w \) of the functions \( u \) and \( w \). We later show, in Lemma \ref{lem:consequence-uniqueness}, that on a finite-dimensional subspace where we have \( u = u^{m} \) and \( w = w^{m} \) as linear combinations of basis functions, the solutions to (\ref{eqn:model-aug-weak}) satisfy \(  \partial_{x} u^{m} = v^{m} \) and \( \partial_{x} w^{m} = z^{m} \).

\subsection{Galerkin approximations and basis functions}

We define the Galerkin approximations for the variables \( u, w, v \) and \( z \). For a fixed positive integer \( m \), define the functions,
\begin{equation}
\label{eqn:galerkin-approxs}
\begin{aligned}
 u^{m} (x,t) &= \sum_{j=0}^{2m} d_{j}(t) \varphi_{j} (x), \quad 
 v^{m} (x,t) = \sum_{j=0}^{2m} \alpha_{j}(t) \varphi_{j}(x), \\
 w^{m} (x,t) &= \sum_{k=1}^{m} e_{k}(t) \psi_{k}(x), \quad 
 z^{m} (x,t) = \sum_{k=1}^{m} \beta_{k}(t) \rho_{k}(x) ,
\end{aligned}
\end{equation}
with coefficients  \( d_{j} (t) \), \( e_{k} (t) \), \( \alpha_{j} (t) \), \( \beta_{k}(t) \) in \( C^{1}([0,T]) \) satisfying,
\begin{equation}\label{eqn:intial_conditions}
\begin{aligned}
 & d_{j}(0) = \left( u_{0}, \varphi_{j} \right)_{L^{2}(\Omega)}, \quad 
\alpha_{j}(0) = \left( \partial_{x} u_{0}, \varphi_{j} \right)_{L^{2}(\Omega)}  \\ 
 & e_{k}(0) = \left( w_{0}, \psi_{k} \right)_{L^{2}(\Omega)},  \quad \beta_{k}(0) = \left( \partial_{x} w_{0}, \rho_{k} \right)_{L^{2}(\Omega)}, 
 \end{aligned}
\end{equation}
for $j = 0,\cdots, 2m$ and $k = 1, \cdots, m$. Here the basis functions  \( \left\{ \psi_{k} \right\} \subset H^1_0(\Omega) \),  \( \left\{ \rho_{k} \right\} \subset W \) (see equation (\ref{eqn:W}), and  \( \left\{ \varphi_{j} \right\}  \subset V \), are given by
\[ \psi_{k} (x) = \frac{1}{\sqrt{L}} \sin( k \pi x/L), \qquad \rho_{k} (x) = \frac{1}{\sqrt{L}} \sin( k \pi x/L), \]
while
\begin{equation}
\label{eqn:basis-plant}
 \varphi_{0}(x) = 1, \quad 
 \varphi_{j}(x) = 
 \begin{cases}
 \frac{1}{\sqrt{L}} \sin \left( \frac{(2j-1)\pi x}{2L} \right) , & \text{if } j \text{ is odd}, \\ 
 \frac{1}{\sqrt{L}} \cos \left( \frac{(2j-1) \pi x}{2L} \right) , & \text{if } j \text{ is even} ,
 \end{cases}
\end{equation}
with  $ j \in \N$ and  \( k \in \N \cup \{ 0 \} \).

Notice that since $\varphi_j(x) \in V$, we have that \( \varphi_{j}(x) = 0 \) for \( x \in \Omega^{c} \). Moreover, the elements in the set \( \left\{ \varphi_{j} \right\} \) do not all satisfy periodic, homogeneous Dirichlet, or homogeneous Neumann boundary conditions on \( \partial  \Omega \). As a result, solutions $u^m$ to equation \eqref{eqn:model-aug-weak} need not be continuous across the boundary. 
The following lemma shows that the non-standard set $\{ \varphi_j(x)\}$  forms an orthonormal basis for $V$.

\begin{lemma}
The elements $\varphi_j(x) \in C^\infty(\Omega)$ defined in expression \eqref{eqn:basis-plant} form an orthonormal basis for the space
\[ V= \{ u \in L^2(\R)\Big | \quad u =0, \quad \mbox{for} \quad x \in \Omega^c\}.\]
\end{lemma}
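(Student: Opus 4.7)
The plan is to verify three properties in sequence: (i) each $\varphi_j$ lies in $V$ and has unit $L^2$-norm; (ii) distinct basis elements are mutually orthogonal in the $L^2$ inner product; (iii) the closed linear span of $\{\varphi_j\}_{j\geq 0}$ is dense in $V$.

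Properties (i) and (ii) reduce to direct computation. Each $\varphi_j$ vanishes outside $\Omega = [-L,L]$ by construction, so every inner product on $V$ reduces to an integral over $[-L,L]$. The frequencies $(2j-1)\pi/(2L)$ are chosen so that $\int_{-L}^{L}\sin^{2}((2j-1)\pi x/(2L))\,dx = L = \int_{-L}^{L}\cos^{2}((2j-1)\pi x/(2L))\,dx$, which justifies the normalizing prefactor $1/\sqrt{L}$. For the cross terms, I would apply product-to-sum identities so that each inner product reduces to a sum of integrals of $\cos(k\pi x/L)$ or $\sin(k\pi x/L)$ over the symmetric interval $[-L,L]$. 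Sine-cosine terms vanish by odd symmetry of the integrand, while same-type terms at distinct frequencies vanish as routine trigonometric integrals.

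For completeness I would use the isometric identification $V \cong L^2(\Omega)$ and show that the orthogonal complement of $\mathrm{span}\{\varphi_j\}$ in $L^2(\Omega)$ is trivial. Given $f \in L^2(\Omega)$ orthogonal to every $\varphi_j$, decompose $f = f_e + f_o$ into its even and odd parts about $x=0$. The orthogonality of $f$ to the sine elements (odd $j$, all of which are odd functions) forces $f_o$ to be orthogonal to a complete sine system on $[0,L]$, while orthogonality to $\varphi_0$ together with the cosine elements (even $j$, all of which are even functions) forces $f_e$ to be orthogonal to a corresponding complete cosine-plus-constant system on $[0,L]$. Invoking the classical completeness of these trigonometric systems, viewed as eigenfunctions of $-\partial_{xx}$ with appropriate boundary conditions on $[0,L]$, then yields $f_e = f_o = 0$ and hence $f = 0$ a.e.

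The step I expect to be the main obstacle is the completeness argument, because the family is not the standard $2L$-periodic Fourier basis on $[-L,L]$ but a selection of half-period trigonometric modes together with the constant function. Consequently the completeness cannot be read off from a single classical Parseval identity on $[-L,L]$; instead it must be assembled via the parity decomposition and by reducing to known one-sided trigonometric expansions on $[0,L]$. Some care is also needed in the even sector, since $\varphi_0 = 1$ and the cosines at the odd half-integer frequencies must be handled together when restricted to $[0,L]$ in order to produce an expansion system of the expected Sturm--Liouville type.
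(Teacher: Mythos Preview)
Your approach is correct but genuinely different from the paper's. The paper does not use a parity decomposition at all; instead it embeds $V\cong L^2([-L,L])$ into $L^2([-2L,2L))$, where the full Fourier system $\{1\}\cup\{\cos(j\pi x/(2L)),\sin(j\pi x/(2L))\}_{j\ge 1}$ is already known to be a basis. It then computes the $L^2([-L,L])$ inner products between modes of different frequency and observes that same-type modes are orthogonal on $[-L,L]$ precisely when their indices have the same parity, but are \emph{not} orthogonal when the parities differ. From this the paper concludes that the even-frequency modes can be expanded in the odd-frequency ones (and vice versa) when restricted to $[-L,L]$, so discarding the even-indexed modes still leaves a spanning set. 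Your route --- splitting $f$ into even and odd parts and reducing to classical mixed-boundary Sturm--Liouville expansions on $[0,L]$ --- is more elementary in that it never leaves the interval $[-L,L]$ and appeals only to textbook one-dimensional eigenfunction completeness, but it requires two separate completeness citations rather than the paper's single inner-product computation.

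Your instinct that the even sector needs care is well founded, and in fact more care than you indicate: on $[-L,L]$ the constant $\varphi_0=1$ is \emph{not} orthogonal to the cosines $\cos((2k-1)\pi x/(2L))$ (compute $\int_{-L}^{L}\cos((2k-1)\pi x/(2L))\,dx\neq 0$), and the cosine family alone already forms a complete orthonormal system on $[0,L]$ (they are the Neumann--Dirichlet eigenfunctions). So in the even sector you should not try to treat $\{\varphi_0\}\cup\{\text{cosines}\}$ as an orthonormal family; rather, orthogonality of $f_e$ to all the cosines already forces $f_e=0$, and the constant is redundant. This is a wrinkle in the paper's stated basis rather than in your method, but your write-up should address it explicitly.
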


\begin{proof}
Notice that the set
\[
\{ \varphi_{j} \} = \{1\} \cup \left\{ \cos \left( \frac{j \pi x}{2L} \right), \sin\!\left( \frac{j\pi x}{2L} \right) \right\} ,
\]
for \( j \in \N\), is a basis for \( X = L^{2}\left( [-2L, 2L) \right) \) with periodic boundary conditions. Since \( V \subset X \), it follows that any element in $V$ can be described as a linear combination of elements in \( \{\varphi_{j}\} \).
However, if we restrict functions in this basis to the interval \( [-L,L] \), one finds that the inner product
\begin{align*}
\left( \cos\!\left( \frac{j\pi x}{2L}\right), \cos\!\left( \frac{k\pi x}{2L} \right) \right)_{L^{2}([-L,L])} &= \frac{2}{L\pi} \left( \frac{\sin\!\left( \frac{\pi}{2} (j-k) \right)}{j-k} + \frac{\sin\!\left(\frac{\pi}{2} (j+k) \right)}{j+k} \right),
\\
\left( \sin\!\left( \frac{j\pi x}{2L}\right), \sin\!\left( \frac{k\pi x}{2L} \right) \right)_{L^{2}([-L,L])} &= \frac{2}{L\pi} \left( \frac{\sin\!\left(\frac{\pi}{2} (j-k) \right)}{j-k} - \frac{\sin\!\left(\frac{\pi}{2} (j+k) \right)}{j+k} \right),
\end{align*}
for \(j, k \in \N\). 
These are zero if both \( j,k \) are even or odd and are nonzero if \( j \) and \( k \) differ in parity. This orthogonality relation means we can represent the even terms, \( \cos\!\left( \frac{2j\pi x}{2L} \right), \sin\!\left( \frac{2j\pi x}{2L} \right) \), using linear combinations of the odd terms, \( \cos\!\big( \frac{(2j-1)\pi x}{2L} \big), \sin\!\big( \frac{(2j-1)\pi x}{2L} \big) \), and vice versa. Therefore, we can discard either set of terms and retain a basis for \( L^{2}([-L, L]) \). We discard the even terms and retain the odd functions, as they do not satisfy local boundary conditions of any kind.

\end{proof}

\subsection{Projection onto finite subspace}
\label{sec:3_4}

Fixing $m$, inserting expressions \eqref{eqn:galerkin-approxs} into system \eqref{eqn:model-aug-weak}, 
and letting \( \varphi = \varphi_{k} \), \( \psi = \psi_{k}  \), and \( \rho = \rho_{k} \), 
 we arrive at the following system of o.d.e describing the time evolution of the coefficients   \( d_{j} (t) \), \( e_{j} (t) \), \( \alpha_{j} (t) \), \( \beta_{j}(t) \),
\begin{equation}
\label{eqn:model-ode-k}
\begin{aligned}
 \hat{I}_{k}^{j} d^{\prime}_{j}  & = - d \hat{B}_{k}^{j} d_{j} - \mu \hat{I}_{k}^{j} d_{j} + \tilde{P}^{(1)}_{k}(d_{j},e_{j}) ,  \\
 \hat{I}_{k}^{j} e^{\prime}_{j} &  = -  \hat{G}_{k}^{j} e_{j} - \hat{I}_{k}^{j} e_{j} + \tilde{P}^{(2)}_{k}(d_{j},e_{j}) ,  \\ 
 \hat{I}_{k}^{j} \alpha^{\prime}_{j}  & = - ( \mu + d \Gamma ) \hat{I}_{k}^{j} \alpha_{j} +  d \hat{J}_{k}^{j} d_{j} + \tilde{Q}^{(1)}_{k}(d_{j},e_{j},\alpha_{j}, \beta_{j}) , \\ 
 \hat{I}_{k}^{j} \beta^{\prime}_{j}  & = - \hat{M}_{k}^{j} \beta_{j} - \hat{I}_{k}^{j} \beta_{j} + \tilde{Q}^{(2)}_{k}(d_{j},e_{j}, \alpha_{j}, \beta_{j}) .
\end{aligned}
\end{equation}
Here we  use Einstein's notation, i.e., repeated indices represent summation in that index. 
The above expression can be derived using  the linearity of the inner product and of the bilinear forms \( B[\cdot,\cdot] \), \( G[\cdot,\cdot] \), together with  the fact that functions \( \left\{ \varphi_{k} \right\}_{k=0}^{2m} \), \( \left\{ \psi_{k} \right\}_{k=1}^{m} \), and \( \left\{ \rho_{k} \right\}_{k=0}^{m} \) are subsets of orthonormal bases for \( L^{2}(\Omega) \).
In terms of notation, $\hat{I}$ represents the identity matrix (of appropriate dimension), while the symbols \( \hat{B} \), \( \hat{G} \), $\hat{J}$, and \( \hat{M} \) denote the following matrices,
\begin{align}
 \label{eqn:bilinear-mat-B}
 \hat{B} &= 
( \hat{B}_{k}^{j} )_{0 \leq k,j \leq 2m} =
\left( B[\varphi_{j}, \varphi_{k}] \right)_{0 \leq k,j \leq 2m} =
\begin{bmatrix}
 B[\varphi_{0}, \varphi_{0}] & B[\varphi_{1}, \varphi_{0}] & \cdots & B[\varphi_{2m}, \varphi_{0}] \\ 
 B[\varphi_{0}, \varphi_{1}] & B[\varphi_{1}, \varphi_{1}] & \cdots & B[\varphi_{2m}, \varphi_{1}] \\
 \vdots & \vdots & \ddots & \vdots \\
 B[\varphi_{0}, \varphi_{2m}] & B[\varphi_{1}, \varphi_{2m}] & \cdots & B[\varphi_{2m}, \varphi_{2m}] 
\end{bmatrix} , \\
 \label{eqn:bilinear-mat-G}
 \hat{G} &= ( \hat{G}_{k}^{j} )_{1\leq k,j\leq m} = \left( G[\psi_{j}, \psi_{k}] \right)_{1 \leq k,j \leq m} 
  , \\ 
 \label{eqn:bilinear-mat-M}
 \hat{J} &= ( \hat{J}_{k}^{j} )_{0\leq k,j\leq m} = \left( J \varphi_j, \varphi_{k}] \right)_{0 \leq k,j \leq 2m}, \\
 \hat{M} &= ( \hat{M}_{k}^{j} )_{0\leq k,j\leq m} = \left( G[\rho_{j}, \rho_{k}] \right)_{0 \leq k,j \leq m}.
\end{align}

Finally, the expressions \( \tilde{P}^{(i)}_{k} \) and \( \tilde{Q}^{(i)}_{k} \), \( i=1,2 \) represent the projections of the functions \( P^{(1)} \) ,\( P^{(2)} \) ,\( Q^{(1)} \)  and \( Q^{(2)} \) defined in (\ref{eqn:nonlinear-P1}), (\ref{eqn:nonlinear-P2}), (\ref{eqn:nonlinear-Q1}), and (\ref{eqn:nonlinear-Q2}) onto the span of $ \varphi_k, \psi_k, \varphi_k, \rho_k$, respectively.

Notice that the right hand side of system \eqref{eqn:model-ode-k} defines a Lipschitz continuous vector field. Moreover, thanks to the cut-off function, $\sigma$, used in the definition of $P^{(i)}$ and $Q^{(i)}, i =1,2$,  this Lipschitz constant is independent of the parameter $m$. 
Invoking the Picard--Lindel{\"o}f theorem we therefore obtain the following Lemma
\begin{lemma}\label{lem:uniqueness}
Fix $m \in \N$. Then, there exists a time $T>0$, independent of $m$, such that the 
 system of ordinary differential equations \eqref{eqn:model-ode-k}
has a unique solution, \( d_{i} (t) \), \( e_{j} (t) \), \( \alpha_{i} (t) \), \( \beta_{j}(t) \) with $i =0,\cdots, 2m$ and $j = 1,\cdots,m$ 
defined on the time interval $ [0,T]$. Moreover, the solution is continuously differentiable on $[0,T]$.
\end{lemma}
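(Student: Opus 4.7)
The plan is to recast \eqref{eqn:model-ode-k} as a first-order autonomous ODE
\[
 \mathbf{y}'(t) = A_m \mathbf{y}(t) + N_m(\mathbf{y}(t)), \qquad \mathbf{y}(0) = \mathbf{y}_0,
\]
on \( \R^{N_m} \), where \( \mathbf{y} \) stacks the coefficients \( d_j, e_k, \alpha_j, \beta_k \); \( A_m \) assembles all the linear terms, built from \( \hat{B}, \hat{G}, \hat{J}, \hat{M} \) and the scalar constants \( \mu, d, \Gamma, 1 \); \( N_m \) collects the projected nonlinearities \( \tilde{P}^{(i)}_k, \tilde{Q}^{(i)}_k \); and \( \mathbf{y}_0 \) is the initial data from \eqref{eqn:intial_conditions}. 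Since \( N_m \) is obtained by taking \( L^2(\Omega) \) inner products of polynomial expressions in the \( C^\infty \) functions \( \sigma, \sigma' \) applied to the Galerkin approximations against fixed smooth basis functions, the map \( F_m := A_m(\cdot) + N_m(\cdot) \) is locally Lipschitz on \( \R^{N_m} \); the Picard--Lindel\"of theorem then yields a unique \( C^1 \) maximal solution on some interval \( [0, T_{\max}) \).

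The crucial step is an \( m \)-uniform bound \( \abs{N_m(\mathbf{y})} \le C_0 \), which I would obtain as follows: since \( \sigma \in C^\infty(\R) \) satisfies \( \abs{\sigma} \le M \) and \( 0 \le \sigma' \le 1 \), the expressions \( P^{(i)}, Q^{(i)} \) from \eqref{eqn:nonlinear-P1}--\eqref{eqn:nonlinear-Q2} are uniformly bounded pointwise, so their \( L^2(\Omega) \) norms are bounded by a constant \( C = C(M,b,a,L) \) independent of \( \mathbf{y} \) and \( m \). Because \( \{\varphi_j\}, \{\psi_k\}, \{\rho_k\} \) are orthonormal in \( L^2(\Omega) \), Bessel's inequality gives
\[
 \sum_{k} \abs{\tilde{P}^{(i)}_k}^2 \le \norm{P^{(i)}(u^m, w^m)}_{L^2(\Omega)}^2 \le C^2,
\]
and similarly for \( \tilde{Q}^{(i)}_k \); hence the Euclidean norm of \( N_m(\mathbf{y}) \) is bounded by some \( C_0 = C_0(M,b,a,L) \) uniformly in \( m \).

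With this in hand I would rule out finite-time blow-up by applying Gronwall to the integral form
\[
 \abs{\mathbf{y}(t)} \le \abs{\mathbf{y}_0} + \int_0^t \bigl( \norm{A_m} \abs{\mathbf{y}(s)} + C_0 \bigr) \, ds,
\]
which produces \( \abs{\mathbf{y}(t)} \le (\abs{\mathbf{y}_0} + C_0 t)\, e^{\norm{A_m} t} < \infty \) for every finite \( t \). Hence \( T_{\max} = +\infty \); one may then fix any \( T > 0 \) (say \( T = 1 \)) and restrict the solution to \( [0,T] \). Continuity of \( F_m \) combined with continuity of the solution immediately promotes \( \mathbf{y} \) to \( C^1([0,T]) \).

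The main obstacle in the statement is the demand that \( T \) be independent of \( m \). This is delicate because \( \norm{A_m} \) genuinely grows with \( m \) (for instance, \( \hat{G} \) has eigenvalues of order \( (k\pi/L)^2 \)), so a direct use of local Picard--Lindel\"of would give a shrinking existence time \( T_m \to 0 \). The \( m \)-independent boundedness of the nonlinearity \( N_m \) --- which is precisely the reason for introducing the cutoff \( \sigma \) in the augmented system \eqref{eqn:model-aug} --- is what converts each local solution into a global one and allows \( T \) to be chosen uniformly in \( m \).
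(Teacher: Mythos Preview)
Your overall strategy---separate the linear part $A_m\mathbf y$ from the projected nonlinearity $N_m$, show the latter is controlled uniformly in $m$, and then use Gronwall to rule out blow-up---is sound and in fact more explicit than the paper's own one-line justification (the paper simply asserts that the cutoff $\sigma$ makes the vector field Lipschitz with $m$-independent constant and invokes Picard--Lindel\"of).  There is, however, a concrete error in your uniform bound.  You claim that ``the expressions $P^{(i)}, Q^{(i)}$ \dots\ are uniformly bounded pointwise.''  This is true for $P^{(1)}$ and $P^{(2)}$, but \emph{false} for $Q^{(1)}$ and $Q^{(2)}$: inspecting \eqref{eqn:nonlinear-Q1}--\eqref{eqn:nonlinear-Q2}, the variables $v$ and $z$ enter linearly and are \emph{not} passed through $\sigma$; only the coefficients multiplying them are bounded via $\sigma,\sigma'$.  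Hence $Q^{(i)}(u,w,v,z)$ can be arbitrarily large for large $\lvert v\rvert,\lvert z\rvert$, and the claimed inequality $\lvert N_m(\mathbf y)\rvert\le C_0$ does not hold.

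The repair is immediate.  From $\lvert\sigma\rvert\le M$, $0\le\sigma'\le1$, and the orthonormality of the bases (so that $\lVert v^m\rVert_{L^2}=\lvert\alpha\rvert$, $\lVert z^m\rVert_{L^2}=\lvert\beta\rvert$), one has $\lVert Q^{(i)}(u^m,w^m,v^m,z^m)\rVert_{L^2(\Omega)}\le C(M,b)\,(\lvert\alpha\rvert+\lvert\beta\rvert)$.  Together with Bessel this yields a linear-growth estimate
\[
 \lvert N_m(\mathbf y)\rvert \le C_0 + C_1\,\lvert\mathbf y\rvert,
\]
with $C_0,C_1$ depending only on $M,a,b,\lvert\Omega\rvert$ and not on $m$.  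Feeding this into your Gronwall step gives $\lvert\mathbf y(t)\rvert \le (\lvert\mathbf y_0\rvert + C_0 t)\,e^{(\lVert A_m\rVert + C_1)t}$, which is finite for every $t$, so $T_{\max}=+\infty$ and any fixed $T>0$ works uniformly in $m$, exactly as you intended.
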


With the uniqueness of the ODE system (\ref{eqn:model-ode-k}) in hand, we can prove that the Galerkin approximation \( u^{m} \) lies in \( H^{1}(\Omega) \).  To do this, we take advantage of the correspondence between the solutions to this ODE and the Galerkin approximations $u^m,w^m,v^m,z^m$, and show that the derivatives, \( \partial_{x} u^{m} \) and \( \partial_{x} w^{m} \), satisfy the  the weak formulation of the equations for \( v^m \) and \( z^m \), respectively. 
Using our uniqueness result we are then be able to conclude that \( \partial_{x} u^{m} = v^{m} \) and \( \partial_{x} w^{m} = z^{m} \). This is in essence the content of the Lemma \ref{lem:consequence-uniqueness}, shown next. 
In Section \ref{sec:4} we show
that the Galerkin approximations $u^m,w^m,v^m,z^m$ are bounded in $L^2(\Omega)$. It then follows that $u^m$ and $w^m$ are both  in $H^1(\Omega)$.

\begin{lemma}
\label{lem:consequence-uniqueness}
 Let  
\[
{\bf V}(t) = \left( d_{0} , \ldots, d_{2m} ,\, e_{1} , \ldots, e_{m}, \, \alpha_{0} , \ldots, \alpha_{2m}, \beta_{1}, \ldots, \beta_{m}  \right)(t) ,
\]
denote the unique solution to the system of ordinary differential equations (\ref{eqn:model-ode-k}) with corresponding initial conditions defined by \eqref{eqn:intial_conditions}.
Let \( u^{m}, v^{m}, w^{m}, z^{m} \) denote the Galerking approximations of the form (\ref{eqn:galerkin-approxs}), constructed using the vector ${\bf V}$ as coefficients. Then,
\[
 \partial_{x} u^{m} = v^{m} \quad \text{and} \quad 
 \partial_{x} w^{m} = z^{m}
\]
on \( \Omega \times [0, T] \).
\end{lemma}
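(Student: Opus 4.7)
The strategy rests on the uniqueness result of Lemma \ref{lem:uniqueness} combined with the structural identities \eqref{eqn:Ju-Ku-derivative}--\eqref{eqn:Q2-P2-derivative} that motivated the augmented system \eqref{eqn:model-aug}. Setting $\tilde{v} := \partial_{x} u^{m}$ and $\tilde{z} := \partial_{x} w^{m}$, I aim to show that $(u^{m}, w^{m}, \tilde{v}, \tilde{z})$ solves the same coefficient ODE system \eqref{eqn:model-ode-k} with the same initial data as $(u^{m}, w^{m}, v^{m}, z^{m})$. Since \eqref{eqn:model-ode-k} has a unique $C^{1}$ solution on $[0,T]$, this will force $\tilde{v} = v^{m}$ and $\tilde{z} = z^{m}$.

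For the main step, I will use that $u^{m}(\cdot, t)$ and $w^{m}(\cdot, t)$ are finite linear combinations of smooth basis functions, so they satisfy the first two equations of \eqref{eqn:model-aug} in a pointwise sense on $\Omega$. Differentiating these equations in $x$ and inserting the identities \eqref{eqn:Ju-Ku-derivative}, \eqref{eqn:Q1-P1-derivative}, and \eqref{eqn:Q2-P2-derivative} (where the last two follow from the chain rule applied through $\sigma$) produces exactly the third and fourth equations of \eqref{eqn:model-aug} with $v$ replaced by $\tilde{v}$ and $z$ replaced by $\tilde{z}$. Testing these against $\varphi_{k}$ and $\rho_{k}$ and invoking orthonormality converts them into the last two scalar ODEs of \eqref{eqn:model-ode-k} for the coefficients $\tilde{\alpha}_{j}(t) := (\tilde{v}, \varphi_{j})_{L^{2}(\Omega)}$ and $\tilde{\beta}_{k}(t) := (\tilde{z}, \rho_{k})_{L^{2}(\Omega)}$ in place of $\alpha_{j}$ and $\beta_{k}$.

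To conclude, I will verify $\tilde{\alpha}_{j}(0) = \alpha_{j}(0)$ and $\tilde{\beta}_{k}(0) = \beta_{k}(0)$ by integration by parts using \eqref{eqn:intial_conditions}, the volume constraint $u_{0} = 0$ on $\Omega^{c}$, and the boundary behavior of $w_{0}$ so that the boundary contributions vanish. Uniqueness of the ODE system then gives $\tilde{\alpha}_{j} \equiv \alpha_{j}$ and $\tilde{\beta}_{k} \equiv \beta_{k}$, and equality of the functions themselves follows provided $\partial_{x} u^{m}$ and $\partial_{x} w^{m}$ already lie in $\vspan\{\varphi_{j}\}_{j=0}^{2m}$ and $\vspan\{\rho_{k}\}_{k=1}^{m}$, respectively. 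This closure-under-differentiation property of the bases is the step I expect to be the main obstacle, and it is precisely the reason the non-standard basis \eqref{eqn:basis-plant} is engineered the way it is.
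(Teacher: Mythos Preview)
Your overall strategy---show that $(\partial_x u^m,\partial_x w^m)$ satisfies the last two projected equations with the same initial data as $(v^m,z^m)$, then invoke the ODE uniqueness from Lemma~\ref{lem:uniqueness}---is exactly the paper's approach, and you have correctly identified closure of the bases under $\partial_x$ as the crucial structural fact.

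There is, however, a genuine gap in your main step. You write that $u^m$ and $w^m$ ``satisfy the first two equations of \eqref{eqn:model-aug} in a pointwise sense on $\Omega$,'' and then differentiate those pointwise identities in $x$. This is false: the Galerkin approximations only satisfy the \emph{projected} equations. The residual
\[
N_1 \;=\; \partial_t u^m - d\,K u^m + \mu u^m - P^{(1)}(u^m,w^m)
\]
is not zero pointwise, because neither $K u^m$ nor the nonlinearity $P^{(1)}(u^m,w^m)$ lies in $\vspan\{\varphi_k\}_{k=0}^{2m}$. What you actually know is only $(N_1,\varphi_j)_{L^2(\Omega)}=0$ for $j=0,\dots,2m$, and the analogous statement for $N_2$. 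Consequently there is no pointwise equation to differentiate, and your pipeline ``differentiate pointwise, then test'' breaks at the first step.

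The paper's fix is to work directly with the residuals: one observes that the quantity to be shown to vanish is precisely $(\partial_x N_1,\varphi_k)_{L^2(\Omega)}$ (respectively $(\partial_x N_2,\rho_k)_{L^2(\Omega)}$), and then argues that this follows from $(N_1,\varphi_j)=0$ for all $j$ together with the fact that $\partial_x\varphi_k$ lies in $\vspan\{\varphi_j\}_{j=0}^{2m}$ (and $\partial_x\psi_k\in\vspan\{\rho_j\}$). In effect this is an integration-by-parts argument that transfers the derivative onto the test function; the closure-under-differentiation property you flagged is what makes that transferred term vanish. Once you replace ``pointwise satisfaction then differentiate'' by this residual argument, the rest of your plan (matching initial data, then uniqueness) goes through and coincides with the paper's proof.
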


\begin{proof}
Suppose \( u^{m}, w^{m} \) are of the form (\ref{eqn:galerkin-approxs}) and that their coefficients $\{d_k(t)\}_{k=0}^{2m}, \{ e_k(t)\}_{k =1}^m$ solve the first and second system of equation in (\ref{eqn:model-ode-k}). It then follows that their derivatives, $\partial_x u$ and $\partial_x w$, are functions in $V$ and $H^3(\Omega)$, respectively. We can therefore consider the expressions,
\begin{align}
\label{eqn:lem-plugin-v}
 F^{(1)}_k  = &\left( (\partial_{x} u^{m})_{t}, \varphi_{k} \right)_{L^{2}(\Omega)} + (\mu + d \Gamma) \left( \partial_{x} u^{m}, \varphi_{k} \right)_{L^{2}(\Omega)} - d \left( J u^{m}, \varphi_{k} \right)_{L^{2}(\Omega)} \\  \nonumber
 &- \left( Q^{(1)}(u^{m},w^{m},\partial_{x} u^{m}, \partial_{x} w^{m}), \varphi_{k} \right)_{L^{2}(\Omega)}  ,\\[2ex]
\label{eqn:lem-plugin-z}
F^{(2)}_k  =  &\left( (\partial_{x} w^{m})_{t}, \rho_{k} \right)_{L^{2}(\Omega)} + G[ \partial_{x} w^{m}, \rho_{k}]  + \left( \partial_{x} w^{m}, \rho_{k} \right)_{L^{2}(\Omega)} \\ \nonumber
 &- \left( Q^{(2)}(u^{m},w^{m},\partial_{x} u^{m}, \partial_{x} w^{m}), \rho_{k} \right)_{L^{2}(\Omega)},
\end{align}

where as before, the functions \( Q^{(i)} \), \( i=1,2 \) represent the nonlinear maps defined in (\ref{eqn:nonlinear-Q1}), (\ref{eqn:nonlinear-Q2}). In what follows we show that $F^{(i)}_k =0$ for $i =1,2$, since this is equivalent to $\partial_x u^m$ and $\partial_x w^m$ solving the third and fourth system of equations appearing in \eqref{eqn:model-ode-k}. Lemma \ref{lem:uniqueness} then implies that $v^m = \partial_x u^m$ and $z^m = \partial_x w^m$, giving us the desired result.

Notice first that the identities
\begin{align*}
 (\partial_{x} ( K u ), \varphi)_{L^2(\Omega)} &= ( J u - \Gamma \partial_{x} u, \varphi)_{L^2(\Omega)}  \\
 G[\partial_x w, \rho] &= - ( \partial_{xx} \left( \partial_{x} w \right) - \nu \partial_x w , \rho)_{L^2(\Omega)} 
 \end{align*}
are valid, since \( \partial_{x} u^{m} \) is in \( V \) and $w \in H^3(\Omega)$. 
Moreover, because  \( u, w \in C^{1}(\Omega) \) we have that 
\[ \partial_{x} P^{(i)} (u,w) = Q^{(i)}(u,w,\partial_{x} u, \partial_{x} w) , \qquad  i=1,2. \]
As a result, the expressions $F^{(i)}_k$ can be re-written as

\begin{align}
\label{eqn:lem-plugin-v2}
F^{(1)}_k = & \left( \partial_{x} (\partial_{t} u^{m}), \varphi_{k} \right)_{L^{2}(\Omega)}  - d \left( \partial_{x} (K u^{m}), \varphi_{k} \right)_{L^{2}(\Omega)} + \mu \left( \partial_{x} u^{m}, \varphi_{k} \right)_{L^{2}(\Omega)}  \\ \nonumber
& - \left( \partial_{x} P^{(1)}(u^{m},w^{m}), \varphi_{k} \right)_{L^{2}(\Omega)} ,\\[2ex]
\label{eqn:lem-plugin-z2}
F^{(2)}_k = &\left( \partial_{x} (\partial_{t} w^{m}), \rho_{k} \right)_{L^{2}(\Omega)}  -  (\partial_{xx} w^{m})_{x}  - \nu \partial_{x} w^{m}, \rho_{k} )_{L^2(\Omega)} + \left( \partial_{x} w^{m}, \rho_{k} \right)_{L^{2}(\Omega)}\\  \nonumber
& - \left( \partial_{x} P^{(2)}(u^{m},w^{m}), \rho_{k} \right)_{L^{2}(\Omega)}.
\end{align}
We can further simplify our notation by letting
\begin{align*}
N_{1} &= \partial_{t} u^{m} - d (K u^{m}) + \mu u^{m} - P^{(1)}(u^{m},w^{m}) , \\ 
N_{2} &= \partial_{t} w^{m} - (\partial_{xx} w + \nu \partial_{x} w) + w^{m} - P^{(2)}(u^{m},w^{m}) , 
\end{align*}
so that  (\ref{eqn:lem-plugin-v2}), (\ref{eqn:lem-plugin-z2}) are now given by
\begin{align*}
F^{(1)}_k = &\left( \partial_{x} N_{1}, \varphi_{k} \right)_{L^{2}(\Omega)} , \\ 
F^{(2)}_k =&\left( \partial_{x} N_{2}, \rho_{k} \right)_{L^{2}(\Omega)}.
\end{align*}

Next, from Lemma \ref{lem:formula-inner-product-bilinear} we know that the identity \( B[u,\varphi] = -\left( Ku, \varphi \right)_{L^{2}(\Omega)} \) holds for all \( u \in V \). Using this result, together with the definition of the bilinear form $G$,
we are then able to recognize
\begin{align*}
&\left(  N_{1}, \varphi_{k} \right)_{L^{2}(\Omega)} =0, \\ 
&\left( N_{2}, \rho_{k} \right)_{L^{2}(\Omega)} =0.
\end{align*}
as the first and second set of equations appearing in the system (\ref{eqn:model-ode-k}). 
Because \( u^{m} \) and $w^m$  solve these systems, the relations then show that the projections of \( N_{1} \) and $N_2$ onto the span of \( \left\{ \varphi_{k} \right\}_{k=0}^{2m} \) and $\{ \rho_k\}_{k=1}^{m}$ are equal to zero. 
At the same time, since \( \partial_{x} \varphi_{k} \in \left\{ \varphi_{k} \right\}_{k=0}^{2m}   \) and $ \partial_x \psi_k \in \{ \rho_k\}_{k=1}^{m}$,  it then follows that the functions defined by \( \partial_{x} N_{1} \) and \( \partial_{x} N_{2} \) are the zero functions in these finite subspaces. 
As a result we obtain that \( \left( \partial_{x} N_{1}, \varphi_{k} \right)_{L^{2}(\Omega)} = 0 \) for \( k = 0,1, \cdots, 2m \), and \( \left( \partial_{x} N_{2}, \rho_{k} \right)_{L^{2}(\Omega)} = 0 \) for \( k = 1,2, \cdots, m \). Consequently, the equations (\ref{eqn:lem-plugin-v}) (\ref{eqn:lem-plugin-z}) are indeed valid. 

\end{proof}

\section{Energy Estimates}
\label{sec:4}

Our goal in this section is to prove that the functions \( u^{m} \) and \( w^{m} \) that represent those elements in the sequences of approximations to solutions of (\ref{eqn:model-weak}) and are of the form (\ref{eqn:galerkin-approxs}), satisfy
\begin{align*}
 \norm{ u^{m} }_{ L^{\infty}(\Omega) } &< M / 2 , \\ 
 \norm{ w^{m} }_{ L^{\infty}(\Omega) } &< M / 2 ,
\end{align*}
for initial values that are sufficiently small in \( L^{2}(\Omega) \) and for sufficiently small times \( T > 0 \). (Throughout the remainder of the paper, whenever we write \( u^{m} \) or \( w^{m} \), we mean these to be as above.)

First, we prove energy estimates for the sequences of approximate solutions, 
\begin{equation*}
 \left\{ u^{m} \right\} , \, \left\{ w^{m} \right\}, \,  \left\{ (u_{x})^{m} \right\},\, \left\{ (w_{x})^{m} \right\} ,
\end{equation*}
defined as in \eqref{eqn:galerkin-approxs} and with coefficients satisfying the system of ODES \eqref{eqn:model-ode-k}. 
That is, we show that for a.e. $ t \in (0,T),$  these functions are in $L^2(\Omega)$. Then, since functions in \( H^{1}(\Omega) \) are embedded in \( L^{\infty}(\Omega) \) (see  \cite[Theorem 4.12 in Chapter 4]{adams2003sobolev}), we have that
\begin{align*}
 \norm{ u^{m} }_{ L^{\infty}(\Omega) } 
 &\leq C \norm{u^{m}}_{H^{1}(\Omega)} , \\
 \norm{ w^{m} }_{ L^{\infty}(\Omega) } 
 &\leq C \norm{w^{m}}_{H^{1}(\Omega)} , 
\end{align*}
for a.e. \( t \in [0,T] \), from which the desired result then follows.

For ease of exposition we re-write ODE system \eqref{eqn:model-ode-k} as,
\begin{equation}
\label{eqn:model-aug-weak-proj2}
\begin{aligned}
 \left< u^m_{t}, \varphi_k \right>_{(V^{\prime}\times V)} + d B[u^m,\varphi_k] + \mu \left( u^m, \varphi_k \right)_{L^{2}(\Omega)} &= \left( P^{(1)}(u^m,w^m) , \varphi_k \right)_{L^{2}(\Omega)}  \\ 
 \left< w^m_{t}, \psi_k \right>_{(H^{-1}(\Omega) \times H^{1}_{0}(\Omega))} + G[w^m, \psi_k] + \left( w^m, \psi_k \right)_{L^{2}(\Omega)} &= \left( P^{(2)}(u^m,w^m) , \psi_k \right)_{L^{2}(\Omega)} , \\
 \left< v^m_{t}, \varphi_k \right>_{(V^{\prime}\times V)} + ( \mu + d \Gamma ) \left(  v^m ,\varphi_k \right)_{L^{2}(\Omega)} &= d \left( J u^m, \varphi_k \right)_{L^{2}(\Omega)} \\
 &+ \left( Q^{(1)}(u^m,w^m,v^m,z^m), \varphi_k \right)_{L^2(\Omega)} \\ 
 \left< z^m_{t}, \rho_k \right>_{(H^{-1}(\Omega) \times H^{1}_{0}(\Omega))} + G[z^m,\rho_k] + \left( z^m, \rho_k \right)_{L^{2}(\Omega)} &= 
 \left(Q^{(2)}(u^m,w^m,v^m,z^m),  \rho_k \right)_{L^2(\Omega)}.
 \end{aligned}
\end{equation}
Notice that Lemma \ref{lem:consequence-uniqueness} is instrumental in relating the variables \( (v^{m} ,z^{m} ) \) appearing the last two equations in (\ref{eqn:model-aug-weak-proj2}) to the spatial derivatives of the variables \( (u^{m}, w^{m}) \) appearing in the first two equations.

Throughout this section, \( a, b, d, \mu, \nu >0 \) are model parameters, \( \Omega \) is the spatial domain, $T$
is the maximum existence time given by Lemma \ref{lem:uniqueness},
 \( \kappa, \kappa^{\prime} \chi, \chi^{\prime} > 0 \) are positive constants given in Lemmas \ref{lem:bilinear-form-water} and \ref{lem:bilinear-form-water-W}, and are associated to the coercivity of the bilinear form \( G[\cdot,\cdot] \) defined in (\ref{eqn:bilinear-form-water}).

\subsection{Estimates in $ L^{2}(\Omega) $}
\label{sub:L2estimates}

\begin{lemma}
\label{lem:u-w-energy-estimate}
Let \( m \in \N\), and assume that \( u^{m}, w^{m} \) are functions of the form (\ref{eqn:galerkin-approxs}) which represent solutions of the first two equations in (\ref{eqn:model-ode-k}), or equivalently of \eqref{eqn:model-aug-weak-proj2}.
Then, 
\begin{equation}
\label{eqn:u-w-energy-estimate}
 \norm{u^{m}(t)}_{L^{2}(\Omega)} + \norm{w^{m}(t)}_{L^{2}(\Omega)} 
 \leq 
 C_{1} \left( \norm{ u(0) }_{L^{2}(\Omega)} + \norm{ w(0) }_{L^{2}(\Omega)} \right) + C_{2}
\end{equation}
for all \( t \in [0,T] \), where \( C_{1} , C_{2}  >0 \) are the positive constants 
\begin{align}
 \label{eqn:C1}
 C_{1} &= \sqrt{2} \max\left\{ e^{\frac{1}{2}(M^{2}+bM^{3}) T}, e^{\chi T} \right\}, \\
 \label{eqn:C2}
 C_{2} &= a e^{\chi T} \sqrt{2 \abs{ \Omega } T} .
\end{align}

\end{lemma}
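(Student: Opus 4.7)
The plan is to derive $L^2$ energy identities by testing each of the two evolution equations in \eqref{eqn:model-aug-weak-proj2} against its own approximation. Multiplying the $k$-th $u$-equation by $d_k(t)$, the $k$-th $w$-equation by $e_k(t)$, and summing in $k$, the orthonormality of $\{\varphi_k\}$ and $\{\psi_k\}$ yields
\begin{align*}
\tfrac{1}{2} \tfrac{d}{dt} \norm{u^m}_{L^2(\Omega)}^2 + d \, B[u^m, u^m] + \mu \norm{u^m}_{L^2(\Omega)}^2 &= \left( P^{(1)}(u^m, w^m), u^m \right)_{L^2(\Omega)}, \\
\tfrac{1}{2} \tfrac{d}{dt} \norm{w^m}_{L^2(\Omega)}^2 + G[w^m, w^m] + \norm{w^m}_{L^2(\Omega)}^2 &= \left( P^{(2)}(u^m, w^m), w^m \right)_{L^2(\Omega)}.
\end{align*}
On the left, $d\,B[u^m, u^m] + \mu \norm{u^m}_{L^2(\Omega)}^2 \geq 0$ by Lemma \ref{lem:bilinear-form-plant}(ii) and is simply dropped, while Lemma \ref{lem:bilinear-form-water}(ii) gives $G[w^m, w^m] \geq -\chi \norm{w^m}_{L^2(\Omega)}^2$, moving a controllable $\chi\norm{w^m}_{L^2(\Omega)}^2$ term to the right-hand side.

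For the right-hand sides, the cutoff $\sigma$ from \eqref{eqn:sigma} is the essential device that keeps the estimates linear. Using $\abs{\sigma(y)} \leq M$ together with $\abs{\sigma(y)} \leq \abs{y}$ (which follows from $\sigma(0) = 0$ and $0 \leq \sigma' \leq 1$), I would estimate
\[
\abs{\left( P^{(1)}(u^m, w^m), u^m \right)_{L^2(\Omega)}} \leq M^2(1 + bM) \int_\Omega \abs{w^m}\abs{u^m}\, dx \leq \tfrac{M^2 + bM^3}{2} \left( \norm{u^m}_{L^2(\Omega)}^2 + \norm{w^m}_{L^2(\Omega)}^2 \right)
\]
by Cauchy-Schwarz and Young's inequality, matching the $(M^2 + bM^3)/2$ exponent in $C_1$. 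For the $w$-equation I would write $P^{(2)} = a - \sigma(u^m)^2 \sigma(w^m)$; since $\sigma(y) y \geq 0$ pointwise by construction of $\sigma$, the contribution $-\int \sigma(u^m)^2 \sigma(w^m) w^m\, dx$ is nonpositive and can be discarded, leaving only $a \int_\Omega w^m \, dx \leq a \abs{\Omega}^{1/2} \norm{w^m}_{L^2(\Omega)}$.

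Combining these bounds produces $\tfrac{d}{dt} \norm{w^m}_{L^2(\Omega)}^2 \leq 2\chi \norm{w^m}_{L^2(\Omega)}^2 + 2a \abs{\Omega}^{1/2} \norm{w^m}_{L^2(\Omega)}$, which crucially decouples from $u^m$; the substitution $y = \norm{w^m}_{L^2(\Omega)}$ reduces this to the linear inequality $\dot y \leq \chi y + a\abs{\Omega}^{1/2}$, and Gronwall delivers the $e^{\chi T}$ factor together with the $\sqrt{T}$-weighted inhomogeneous contribution comprising $C_2$. In parallel, $\tfrac{d}{dt} \norm{u^m}_{L^2(\Omega)}^2 \leq (M^2 + bM^3)\left( \norm{u^m}_{L^2(\Omega)}^2 + \norm{w^m}_{L^2(\Omega)}^2 \right)$; substituting the already-obtained $w$-bound and applying Gronwall a second time produces the $e^{(M^2 + bM^3)T/2}$ factor of $C_1$, and summing the two bounds followed by $a + b \leq \sqrt{2(a^2 + b^2)}$ explains the overall $\sqrt{2}$ prefactor. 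The main obstacle is controlling the original cubic nonlinearity $u^2 w(1 - bu)$ in a way that yields a linear rather than a superlinear Gronwall inequality; this is precisely the purpose of the cutoff $\sigma$ in the augmented system \eqref{eqn:model-aug}, which replaces the cubic coupling by a globally bounded, Lipschitz expression whose contributions are absorbed into coefficients depending only on $M$.
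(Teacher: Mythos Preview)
Your overall strategy---test each equation against its own Galerkin approximation, drop the coercive bilinear terms, linearize the nonlinearities via the cutoff $\sigma$, and apply Gronwall---is correct and matches the paper's. The difference lies in how you handle the $P^{(1)}$ term and, as a consequence, whether the $u$-estimate decouples from $w$.

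You bound $\abs{\sigma(w^m)}\leq\abs{w^m}$ and the remaining $\sigma$-factors by $M$, which produces $\int_\Omega\abs{w^m}\abs{u^m}$ and forces you into a coupled system: first close the $w$-estimate, then feed it back into the $u$-inequality. The paper instead uses $\abs{\sigma(u^m)}\leq\abs{u^m}$ on one factor (and $\abs{\sigma}\leq M$ on the rest, including $\sigma(w^m)$), giving $\abs{(P^{(1)},u^m)}\leq (M^2+bM^3)\norm{u^m}^2$. This completely decouples the $u$-inequality from $w$ and yields directly $\norm{u^m(t)}^2\leq e^{2(M^2+bM^3)t}\norm{u^m(0)}^2$, which is how the stated $C_1$ arises cleanly. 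Your cascaded Gronwall would still close, but the constants would not reduce to the ones in \eqref{eqn:C1}--\eqref{eqn:C2}.

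A second, smaller discrepancy: for the $w$-equation the paper moves the $+\norm{w^m}^2$ to the right and completes the square in $(a-w^m,w^m)$ to obtain the constant forcing $\tfrac12 a^2\abs{\Omega}$, so Gronwall on $\norm{w^m}^2$ gives the additive $a^2\abs{\Omega}T$ term and, after square roots, the $\sqrt{T}$ in $C_2$. Your route---Cauchy--Schwarz on $(a,w^m)$ followed by the scalar inequality $\dot y\leq\chi y+a\abs{\Omega}^{1/2}$---produces an inhomogeneous term proportional to $T$, not $\sqrt{T}$, so it does not recover $C_2$ as written (though it gives a perfectly usable bound).
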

\begin{proof}
We begin by multiplying the first equation in (\ref{eqn:model-aug-weak-proj2}) by \( d_{k} \), \( k = 0, 1, \cdots, 2m \), and adding the equations together. Since \( d_{k} = d_{k}(t) \) is spatially independent, linearity of integration then yields the equation: 
\begin{equation}
 \label{eqn:u-01}
 \left( u^{m}_{t}, u^{m} \right)_{L^{2}(\Omega)} + B[u^{m},u^{m}] + \mu \left( u^{m},u^{m} \right)_{L^{2}(\Omega)} = \left( \sigma(u^{m})^{2} \sigma(w^{m}) (1-b\sigma(u^{m})), u^{m} \right)_{L^{2}(\Omega)}.
\end{equation}
We have \( \left( u^{m},u^{m} \right)_{L^{2}(\Omega)} = \norm{u^{m}}^{2}_{L^{2}(\Omega)} \).
Then, for ease of notation, we write \( \norm{\cdot} = \norm{\cdot}_{L^{2}(\Omega)} \), \( \left( \cdot,\cdot \right) = \left( \cdot,\cdot \right)_{L^{2}(\Omega)} \) and \( u = u^{m} \), \( w = w^{m} \).

Consider \( \left( u_{t}, u \right) = \int_{\Omega} u_{t}\,  u \, dx  \); since \( u = \sum_{k=0}^{2m} d_{k}(t) \varphi_{k}  \) and \( u_{t} = \sum_{k=0}^{2m} d_{k}^{\prime}(t) \varphi_{k} \), with \( \varphi_{k} \) either a \( \cos \) or \( \sin \) function (see equation (\ref{eqn:basis-plant})), with \( d_{k} \), \( d^{\prime}_{k} \) chosen to be continuous, we have \( \abs{ u (x,t) } \leq g_{1} \) and \( \abs{ u_{t} (x,t) } \leq g_{2}  \) for all \( (x,t) \in \Omega \times [0,T] \), for some \( g_{1}, g_{2} \in L^{1}(\Omega) \). Then, by the Dominated convergence theorem:
\begin{equation*}
 \frac{d}{dt} \int_{\Omega} u^{2} \, dx = \int_{\Omega} 2 u \, u_{t} \, dx = 2 \left( u_{t}, u \right).
\end{equation*}
Namely, \( \left( u_{t}, u \right) = \frac{1}{2} \norm{ u }^{2} \).

Then since \( d B[u,u] \geq 0 \) and \( \mu \norm{u}^{2} \geq 0 \), equation (\ref{eqn:u-01}) above implies the inequality, 
\begin{equation}
 \label{ineq:u-01}
 \frac{1}{2} \frac{d}{dt} \norm{u(t)}^{2} \leq \left( \sigma(u^{m})^{2} \sigma(w^{m}) (1-b\sigma(u^{m})), u^{m} \right).
\end{equation}

Because \( z \leq \abs{z} \) for all \( z \in \R \), we bound the modulus of the right-hand side above using the triangle inequality: 
\begin{align*}
 \abs{ \left( \sigma(u)^{2} \sigma(w) (1-b\sigma(u)), u \right) } &\leq \abs{ \left( \sigma(u)^{2} \sigma(w), u \right) } + b \abs{ \left( \sigma(u)^{3} \sigma(w), u \right) } \\ 
 &=  \int_{\Omega} \abs{ \sigma(u)^{2} \sigma(w) u } \, dx   + b \int_{\Omega} \abs{ \sigma(u)^{3} \sigma(w) u } \, dx \\ 
 &\leq M^{2} \int_{\Omega} \abs{ \sigma(u) u }  \, dx  + b M^{3} \int_{\Omega} \abs{ \sigma(u) u } \, dx .
\end{align*}
Here, we used the property that \( \sigma(\cdot) \) is a cutoff function with upper bound the fixed value \( M \) (see equation (\ref{eqn:sigma})).

Now recall that for every \( x \in \R \), \( \abs{ \sigma(u(x)) } \leq \abs{ u(x) } \). Hence, 
\[
 \int_{\Omega} \abs{ \sigma(u) u } \, dx  \leq \int_{\Omega} \abs{ u^{2} } \, dx = \norm{u}^{2}.
\]

Thus, combining all of the above, the differential inequality (\ref{ineq:u-01}) gives us the bound, 
\begin{equation*}
 \frac{d}{dt} \norm{u(t)}^{2} \leq 2 \left( M^{2} + b M^{3} \right) \norm{u(t)}^{2} .
\end{equation*}
Gronwall's inequality then gives us the following estimate:
\begin{equation}
\label{eqn:u-energy-estimate}
\norm{u(t)}^{2} \leq e^{2 (M^{2}+bM^{3}) t} \norm{ u(0) }^{2},
\end{equation}
for all \( t \in [0,T] \).

Next, we operate on the second equation in (\ref{eqn:model-aug-weak-proj2}) similarly as we have done for the first.
Namely, multiply this equation by \( e_{k} \), \( k = 1, 2 , \cdots , m \), and add them together. 
This yields:
\begin{equation*}
 \left( w_{t}, w \right) + G[w,w] + \left( w,w \right) = \left( a - \sigma(u)^{2} \sigma(w), w \right)
\end{equation*}
We rewrite this as 
\begin{equation*}
\left( w_{t}, w \right) + G[w,w] = \left( a - w, w \right) - \left( \sigma(u)^{2} \sigma(w), w \right) .
\end{equation*}
Notice that \( \left( \sigma(u)^{2} \sigma(w), w \right) \geq 0 \) because \( \sigma(w(x)) \) and \( w(x) \) are of the same sign for all \( x \in \R \) and the other factor under the integration, \( \sigma(u)^{2} \), is also nonnegative. This implies 
\begin{equation*}
 \left( w_{t}, w \right) + G[w,w] \leq 
 \left( a - w, w \right) .
\end{equation*}
Then, by Lemma \ref{lem:bilinear-form-water}, there exist constants \( \kappa, \chi > 0 \), satisfying
\begin{equation*}
 \kappa \norm{ w }_{H^{1}_{0}(\Omega)}^{2} \leq G[w,w] + \chi \norm{ w }^{2}.
\end{equation*}
Hence, 
\begin{equation}
 \label{ineq:w-01}
 \frac{1}{2} \frac{d}{dt} \norm{ w(t) }^{2} + \kappa \norm{ w }_{H^{1}_{0}(\Omega)}^{2} - \chi \norm{ w }^{2}\leq 
 \left( a - w, w \right) 
\end{equation}
holds, and since \(  \kappa \norm{ w }_{H^{1}_{0}(\Omega)}^{2} \geq 0\), the estimate (\ref{ineq:w-01}) implies  
\begin{equation}
 \label{ineq:w-02}
  \frac{1}{2} \frac{d}{dt} \norm{ w(t) }^{2} \leq \left( a - w, w \right) + \chi \norm{ w }^{2}.
\end{equation}
Then, we compute: 
\begin{align*}
 2 \left( a - w, w \right) &= \int_{\Omega} 2 a w - 2 w^{2} \, dx \\ 
 &= \int_{\Omega} - \left( w^{2} - 2 a w \right) - w^{2} \, dx.
\end{align*}
Completing the square inside the parenthesis gives: 
\begin{equation*}
\int_{\Omega} - \left( w^{2} - 2 a w + a^{2} \right) + a^{2} - w^{2}  \, dx = \int_{\Omega} a^{2} - (a-w)^{2} - w^{2} \, dx.
\end{equation*}
These operations then give us the formula: 
\begin{equation*}
 \left( a - w, w \right) = \frac{1}{2} \left( a^{2} \abs{ \Omega } - \norm{ a - w }^{2} - \norm{ w }^{2} \right) .
\end{equation*}
Since \( \norm{ a - w }^{2}, \norm{ w }^{2} \geq 0 \) are nonnegative quantities, when we return to equation (\ref{ineq:w-02}), namely, \( \frac{1}{2} \frac{d}{dt} \norm{ w(t) }^{2} \leq \left( a - w, w \right) + \chi \norm{ w }^{2} \), the formula above implies that 
\begin{equation}
\label{ineq:w-03}
 \frac{d}{dt} \norm{ w(t) }^{2} \leq 2 \chi \norm{ w }^{2} + a^{2} \abs{ \Omega } .
\end{equation}
From Gronwall's inequality:
\begin{equation}
\label{eqn:w-energy-estimate}
\norm{w(t)}^{2} \leq e^{2 \chi t} \left( \norm{w(0)}^{2} + a^{2} \abs{ \Omega } t \right) ,
\end{equation}
for all \( t \in [0,T] \).

Then, since \( f_{1}(t) = \eps t \) and \( f_{2}(t) = e^{\omega t} \), \( \eps, \omega \in \R_{\geq 0} = \left\{ x \in \R \mid x \geq 0 \right\} \) are increasing functions of \( t \), it follows that 
\begin{align}
\label{eqn:u-energy-estimate2}
 \max_{0 \leq t \leq T} \norm{ u(t) }^{2} &\leq e^{2 (M^{2} + bM^{3}) T} \norm{ u(0) }^{2}, \\ 
\label{eqn:w-energy-estimate2}
 \max_{0 \leq t \leq T} \norm{ w(t) }^{2} &\leq e^{2 \chi T} \left( \norm{ w(0) }^{2} + a^{2} \abs{ \Omega } T  \right), 
\end{align}
both hold. 
We use this conclusion to write \( T \) rather than \( t \) in applying Gronwall's inequality in future estimates where it applies.

We now consider the sum of the estimates (\ref{eqn:u-energy-estimate2}) and (\ref{eqn:w-energy-estimate2}), take the square root, and bound the corresponding right-hand side. 
Let \( \tilde{ C_{1} } = \max\left\{ e^{(M^{2}+bM^{3}) T}, e^{2\chi T} \right\} \) and \( \tilde{C}_{2} = a^{2} \abs{ \Omega } T e^{2 \chi T}  \).
Then, after adding the above inequalities together, we have 
\begin{equation*}
 \norm{ u(t) }^{2} + \norm{ w(t) }^{2} \leq \tilde{C_{1}} \left( \norm{ u(0) }^{2} + \norm{ w(0) }^{2} \right) + \tilde{C}_{2} .
\end{equation*}
Because of the inequality,
\begin{equation}
\label{ineq:sqrt}
\abs{ a } + \abs{ b } \leq \sqrt{2} \sqrt{ a^{2} + b^{2} } \leq \sqrt{2} \left( \abs{a} + \abs{b} \right) ,
\end{equation}
which holds for real numbers \( a \) and \( b \), and because the square-root function is subadditive, we have
\begin{align*}
  \norm{ u(t) } + \norm{ w(t) }  &\leq \sqrt{2} \sqrt{ \norm{ u(t) }^{2} + \norm{ w(t) }^{2} } \\
&\leq \sqrt{ 2 \tilde{ C_{1} } } \sqrt{ \norm{ u(0) }^{2} + \norm{ w(0) }^{2} } + \sqrt{\tilde{C}_{2}} \\ 
&\leq \sqrt{ 2 \tilde{ C_{1} } } \left( \norm{ u(0) } + \norm{ w(0) }  \right) + \sqrt{\tilde{C}_{2}} .
\end{align*}
Then,
\begin{equation*}
\begin{aligned}
 \sqrt{ \tilde{C_{1}} } &= \sqrt{ \max\left\{ e^{(M^{2}+bM^{3}) T}, e^{2\chi T} \right\} } = \max\left\{ e^{\frac{1}{2}(M^{2}+bM^{3}) T}, e^{\chi T} \right\}, \\
 \sqrt{\tilde{C}_{2}} &= a e^{\chi T} \sqrt{ 2 \abs{\Omega} T } ,
\end{aligned}
\end{equation*}
and the estimate above, 
\begin{equation*}
 \norm{ u(t) } + \norm{ w(t) } \leq C_{1} \left( \norm{ u(0) } + \norm{ w(0) }  \right) + C_{2}, 
\end{equation*}
is (\ref{eqn:u-w-energy-estimate}) after letting \( C_{1} = \sqrt{2 \tilde{C_{1}}} \) and \( C_{2} = \sqrt{\tilde{C}_{2}} \), as in (\ref{eqn:C1}) and (\ref{eqn:C2}), respectively. 

\end{proof}

Next, we prove a similar estimates for \( u^{m}_{x} \) and \( w^{m}_{x} \) in the third and fourth equations in (\ref{eqn:model-aug-weak-proj2}).

\begin{lemma}
\label{lem:ux-wx-energy-estimate}
Let \( m \in \N\), and assume that \( u_{x}^{m}, w_{x}^{m} \) are functions of the form 
(\ref{eqn:galerkin-approxs}) which represent solutions to
the last two equations in  \eqref{eqn:model-ode-k}, or equivalently of (\ref{eqn:model-aug-weak-proj2}). 
Then, 
\begin{equation}
\label{eqn:ux-wx-energy-estimate}
 \norm{u_{x}^{m}(t)}_{L^{2}(\Omega)} + \norm{w_{x}^{m}(t)}_{L^{2}(\Omega)} \leq
 D_{1} \left( \norm{u_{x}(0)}_{L^{2}(\Omega)} + \norm{w_{x}(0)}_{L^{2}(\Omega)} \right) + D_{2}
\end{equation}
for all \( t \in [0,T] \), where \( D_{1}, D_{2} > 0 \) are the positive constants, 
\begin{align}
 \label{eqn:D1}
 D_{1} &= \sqrt{2} e^{\tilde{D}_{1} T}, \\
 \label{eqn:D2}
 D_{2} &= D_{1} \sqrt{2 \tilde{D_{2}} T} = 2 e^{\tilde{D}_{1} T} \sqrt{\tilde{D}_{2} T } .
\end{align}
Here, 
\begin{align}
 \label{eqn:tilde-D1}
 \tilde{D}_{1} &= 2 \max\left\{ 2M + 3bM^{3}, \frac{M^{2}+bM^{3}}{2} \right\} + 2 \max\left\{ M^{2}, 3bM^{3} \right\} + \max\left\{ \frac{d \norm{J}_{\text{op}}}{2}, \chi^{\prime} \right\} , \\
 \label{eqn:tilde-D2}
 \tilde{D}_{2} &= \frac{d \norm{J}_{\text{op}}}{2} e^{2 (M^{2}+bM^{3}) T} \norm{ u(0) }_{L^{2}(\Omega)}^{2}.
\end{align}
\end{lemma}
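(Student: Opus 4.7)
The plan is to mirror the argument of Lemma \ref{lem:u-w-energy-estimate}, but now test the third and fourth equations of \eqref{eqn:model-aug-weak-proj2} against $v^m$ and $z^m$ respectively. Specifically, multiply the third equation by $\alpha_k(t)$ and the fourth by $\beta_k(t)$, then sum over $k$. Using linearity of the inner product and bilinear forms, together with the same dominated convergence argument used to identify $(v^m_t,v^m) = \tfrac{1}{2}\tfrac{d}{dt}\|v^m\|^2$, this yields
\begin{align*}
\tfrac{1}{2}\tfrac{d}{dt}\|v^m\|^2 + (\mu + d\Gamma)\|v^m\|^2 &= d\,(J u^m, v^m) + (Q^{(1)}(u^m,w^m,v^m,z^m), v^m), \\
\tfrac{1}{2}\tfrac{d}{dt}\|z^m\|^2 + G[z^m, z^m] + \|z^m\|^2 &= (Q^{(2)}(u^m,w^m,v^m,z^m), z^m).
\end{align*}
The term $(\mu+d\Gamma)\|v^m\|^2 \geq 0$ can be dropped from the first line. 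For the second, Lemma \ref{lem:bilinear-form-water-W} applied to $z^m$ (noting that after Lemma \ref{lem:consequence-uniqueness}, $z^m = \partial_x w^m$ has mean zero on $\Omega$, so $z^m \in W$) gives $G[z^m,z^m] \geq -\chi' \|z^m\|^2$, which can be absorbed into the right-hand side.

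Next, I would bound the nonlinear terms pointwise using the cutoff bounds $|\sigma(\cdot)|\leq M$ and $|\sigma'(\cdot)|\leq 1$ from \eqref{eqn:sigma}. These give estimates of the form $|Q^{(1)}| \leq (2M^2 + 3bM^3)|v^m| + (M^2 + bM^3)|z^m|$ and $|Q^{(2)}| \leq 2M^2|v^m| + M^2|z^m|$ (and similar, up to combinatorial rearrangement). Applying Cauchy--Schwarz and Young's inequality of the form $\|v^m\|\|z^m\| \leq \tfrac{1}{2}(\|v^m\|^2 + \|z^m\|^2)$ then controls $(Q^{(i)}, v^m)$ and $(Q^{(i)}, z^m)$ by multiples of $\|v^m\|^2 + \|z^m\|^2$, and the bookkeeping of which terms land on which norm is precisely what produces the two $\max$ expressions in the definition of $\tilde{D}_1$ in \eqref{eqn:tilde-D1}.

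The nonlocal forcing $d(J u^m, v^m)$ is handled by noting that $J:V \to L^2(\Omega)$ is bounded, with operator norm finite since $\partial_x \gamma \in L^1(\R)$ by Hypothesis \ref{hyp:kernel}. Cauchy--Schwarz and Young then yield
\[ d\,|(J u^m, v^m)| \;\leq\; \tfrac{d\|J\|_{\text{op}}}{2}\|u^m\|^2 + \tfrac{d\|J\|_{\text{op}}}{2}\|v^m\|^2. \]
Crucially, $\|u^m\|^2$ is not controlled by the new unknowns $v^m,z^m$, so I would feed in the already-proven estimate \eqref{eqn:u-energy-estimate} to bound $\|u^m(t)\|^2 \leq e^{2(M^2+bM^3)T}\|u(0)\|^2$. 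This converts the $\|u^m\|^2$ contribution into the pure forcing constant $\tilde{D}_2$ defined in \eqref{eqn:tilde-D2}, while the $\|v^m\|^2$ contribution is absorbed into the coefficient $\tilde{D}_1$ (accounting for the $\tfrac{d\|J\|_{\text{op}}}{2}$ term in the last $\max$).

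Adding the two resulting scalar inequalities yields a differential inequality of the form
\[ \tfrac{d}{dt}\bigl(\|v^m(t)\|^2 + \|z^m(t)\|^2\bigr) \;\leq\; 2\tilde{D}_1\bigl(\|v^m(t)\|^2 + \|z^m(t)\|^2\bigr) + 2\tilde{D}_2, \]
to which Gronwall's inequality applies, giving $\|v^m(t)\|^2 + \|z^m(t)\|^2 \leq e^{2\tilde{D}_1 T}(\|v(0)\|^2 + \|z(0)\|^2) + 2\tilde{D}_2 T\, e^{2\tilde{D}_1 T}$ for all $t\in[0,T]$. Taking square roots and applying the elementary inequality \eqref{ineq:sqrt}, together with the identification $v^m = \partial_x u^m$ and $z^m = \partial_x w^m$ from Lemma \ref{lem:consequence-uniqueness}, delivers \eqref{eqn:ux-wx-energy-estimate} with $D_1, D_2$ exactly as in \eqref{eqn:D1}--\eqref{eqn:D2}. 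The main obstacle is the precise bookkeeping needed to show that all cross terms produced by Young's inequality on $Q^{(1)}, Q^{(2)}$ and $(J u^m, v^m)$ collect into the stated form of $\tilde{D}_1$; no genuinely new analytic idea beyond those in Lemma \ref{lem:u-w-energy-estimate} is required, since the coercivity of $G$ on $W$ (Lemma \ref{lem:bilinear-form-water-W}) plays exactly the role that coercivity of $G$ on $H^1_0$ did before.
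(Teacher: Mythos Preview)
Your proposal is correct and follows essentially the same approach as the paper: test the third and fourth equations of \eqref{eqn:model-aug-weak-proj2} against $v^m$ and $z^m$, drop the sign-definite terms $(\mu+d\Gamma)\|v^m\|^2$ and $\kappa'\|z^m\|_W^2$, bound $Q^{(1)}$, $Q^{(2)}$ via the cutoff properties and Cauchy--Schwarz/Young, control $d(Ju^m,v^m)$ using boundedness of $J$ together with the already-established estimate \eqref{eqn:u-energy-estimate} for $\|u^m\|^2$, then add, apply Gronwall, and take square roots via \eqref{ineq:sqrt}. Your explicit remark that $z^m=\partial_x w^m$ has mean zero (hence lies in $W$, so Lemma~\ref{lem:bilinear-form-water-W} applies) is a point the paper uses implicitly but does not spell out.
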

\begin{proof}
For ease of notation, we do not write the superscript \( m \), and put \( u_{x}^{m} = u_{x} \) and \( w_{x}^{m} = w_{x} \). We also write \( \norm{ \cdot } = \norm{ \cdot }_{L^{2}(\Omega)} \) and \( \left( \cdot, \cdot \right) = \left( \cdot, \cdot \right)_{L^{2}(\Omega)} \).

Multiply the fourth equation in (\ref{eqn:model-aug-weak-proj2}) by \( \beta_{k} \), \( k= 1, 2 ,\cdots, m \), and add the equations together. 
We obtain
\begin{equation}
\label{eqn:2}
\frac{1}{2} \frac{d}{dt} \norm{ w_{x}(t) }^{2} + G[w_{x},w_{x}] + \norm{ w_{x} }^{2} = \left( Q^{(2)}(u,w), w_{x} \right) .
\end{equation}

By Lemma \ref{lem:bilinear-form-water-W}, there exist positive constants \( \kappa^{\prime}, \chi^{\prime} > 0 \), satisfying 
\begin{equation}
 \kappa^{\prime} \norm{ w_{x} }_{W}^{2} \leq G[w_{x},w_{x}] + \chi^{\prime} \norm{ w_{x} }_{L^{2}(\Omega)}^{2}
\end{equation}
Hence, (\ref{eqn:2}) implies the inequality, 
\begin{equation}
\label{ineq:2-01}
\frac{1}{2} \frac{d}{dt} \norm{ w_{x}(t) }^{2} +  \kappa^{\prime} \norm{ w_{x} }_{W}^{2} - \chi^{\prime} \norm{ w_{x} }^{2} + \norm{ w_{x} }^{2} \leq \left( Q^{(2)}(u,w), w_{x} \right) .
\end{equation}
Moreover, since \( \kappa^{\prime} \norm{ w_{x} }_{W}^{2}, \norm{ w_{x} }_{L^{2}(\Omega)}^{2} \geq 0 \), inequality (\ref{ineq:2-01}) implies 
\begin{equation}
\label{ineq:2-02}
\frac{1}{2} \frac{d}{dt} \norm{ w_{x}(t) } \leq 
\chi^{\prime} \norm{ w_{x} }^{2}+ \left( Q^{(2)}(u,w), w_{x} \right) .
\end{equation}
Next, we bound 
\begin{equation}
\label{eqn:Q2-term}
\begin{aligned}
\left( Q^{(2)}(u,w), w_{x} \right) &= - 2 \left( \sigma(u) \sigma^{\prime}(u) \sigma(w) u_{x}, w_{x}  \right) \\ 
&\quad - 3 b \left( \sigma(u)^{2}\sigma^{\prime}(u) \sigma(w) w_{x}, w_{x} \right) .
\end{aligned}
\end{equation}
Using the defining property of the cutoff function \( \sigma \), given in (\ref{eqn:sigma}), we have
\begin{align*}
 \abs{ \left( Q^{(2)}(u,w), w_{x} \right) } &\leq 2 M^{2} \abs{ \left( u_{x},w_{x} \right) } + 3 b M^{3} \abs{ \left( w_{x}, w_{x} \right)  } \\ 
 &\leq 2 M^{2} \norm{ u_{x} } \norm{ w_{x} } + 3 b M^{3} \norm{w_{x} }^{2} . 
\end{align*}
where we also applied the Cauchy-Schwarz inequality in the second line.
Then, by Cauchy's inequality (\( ab \leq \frac{1}{2} ( a^{2} + b^{2}) \), \( a,b\in\R \)), we conclude: 
\begin{align*}
 \abs{ \left( Q^{(2)}(u,w), w_{x}  \right) } &\leq 
 M^{2} \left( \norm{ u_{x} }^{2} + \norm{ w_{x} } \right) + 3 b M^{3} \norm{w_{x} } \\ 
 &\leq 2 \max\left\{ M^{2}, 3 b M^{3} \right\} \left( \norm{ u_{x} }^{2} + \norm{ w_{x} }^{2} \right) .
\end{align*}
From the above, we deduce that (\ref{ineq:2-02}) implies
\begin{equation}
\label{ineq:2-03}
 \frac{1}{2} \frac{d}{dt} \norm{ w_{x}(t) }^{2} \leq 
\chi \norm{ w_{x} }^{2}+ 2 \max\left\{ M^{2}, 3 b M^{3} \right\} \left( \norm{ u_{x} }^{2} + \norm{ w_{x} }^{2} \right) .
\end{equation}

Now multiply the third equation in (\ref{eqn:model-aug-weak-proj2}) by \( \alpha_{k} \), \( k=0,1,\cdots, 2m \), and add the equations together. This yields:
\begin{equation*}
 \frac{1}{2} \frac{d}{dt} \norm{ u_{x}(t) }^{2} + ( \mu + d \Gamma) \norm{ u_{x} }^{2} = d \left( Ju, u_{x} \right) + \left( Q^{(1)}(u,w), u_{x} \right)
\end{equation*}
Since \( (\mu+d \Gamma) \norm{ u_{x} }^{2} \geq 0 \), we have 
\begin{equation}
\label{ineq:1-01}
 \frac{1}{2} \frac{d}{dt} \norm{ u_{x}(t) }^{2} \leq d \left( Ju, u_{x} \right) + \left( Q^{(1)}(u,w), u_{x} \right) .
\end{equation}
Recall that because of Hypothesis \ref{hyp:kernel}, we have that the operator \( J : V\to L^{2}(\Omega) \), defined as  
\begin{equation*}
 \left( Ju \right)(x) = \begin{cases}
 \int_{\R} \left( u(y) - u(x) \right) \partial_{x} \gamma (x,y) \, dy & \text{if } x \in \Omega, \\ 
 0 & \text{if } x\not\in \Omega ,
 \end{cases} 
\end{equation*}
is a bounded linear operator. Thus, by Cauchy-Schwarz,
\begin{equation*}
 \left( Ju, u_{x} \right) \leq 
 \norm{ Ju }  \norm{ u_{x} } \leq 
 \norm{J}_{\text{op}} \norm{u} \norm{u_{x}} .
\end{equation*}
Here, \( \norm{J}_{\text{op}} = \sup_{\norm{u}=1} \norm{Ju} \) is the operator norm of \( J \).
Next, Cauchy's inequality gives us,
\begin{equation*}
 \left( Ju, u_{x} \right) \leq
 \frac{\norm{J}_{\text{op}}}{2} \left( \norm{u}^{2} + \norm{u_{x}}^{2} \right) .
\end{equation*}
In the proof of the preceding lemma, we proved the following energy estimate:
\begin{equation*}
 \max_{0 \leq t \leq T} \norm{u(t)}^{2} \leq e^{2 (M^{2}+bM^{3}) T} \norm{ u(0) }^{2}
\end{equation*}
(see (\ref{eqn:u-energy-estimate})). 
Hence, 
\begin{equation*}
 \left( Ju, u_{x} \right) \leq 
 \frac{\norm{J}_{\text{op}}}{2} \left( e^{2 (M^{2}+bM^{3}) T} \norm{u(0)}^{2} + \norm{u_{x}}^{2} \right) .
\end{equation*}
Then, using the above, (\ref{ineq:1-01}), that is, 
\begin{equation*}
\frac{1}{2} \frac{d}{dt} \norm{ u_{x}(t) }^{2} \leq d \left( Ju, u_{x} \right) + \left( Q^{(1)}(u,w), u_{x} \right) , 
\end{equation*}
implies:
\begin{equation}
\label{ineq:1-02}
\begin{aligned}
 \frac{1}{2} \frac{d}{dt} \norm{ u_{x}(t) }^{2} &\leq 
 \frac{d \norm{J}_{\text{op}}}{2}  \norm{u_{x}}^{2} + \left( Q^{(1)}(u,w), u_{x} \right) \\ 
&\qquad + \frac{d \norm{J}_{\text{op}}}{2} e^{2 (M^{2}+bM^{3}) T} \norm{ u(0) }^{2}  .
\end{aligned}
\end{equation}
We now consider,
\begin{equation}
\label{eqn:Q1-term}
\begin{aligned}
\left( Q^{(1)}(u,w), u_{x} \right) &= \left( \sigma(u) \sigma^{\prime}(u) \sigma(w) (2-3 b \sigma(u)) u_{x}, u_{x} \right) \\ 
&\quad + \left( \sigma(u)^{2} \sigma^{\prime}(w) (1- b\sigma(u) ) w_{x}, u_{x} \right) .
\end{aligned}
\end{equation}
We have 
\begin{equation*}
\abs{ \left( Q^{(1)}(u,w), u_{x} \right) } \leq 
 (2 M^{2} + 3 b M^{3}) \abs{ \left( u_{x}, u_{x} \right) } + (M^{2}+ b M^{3}) \abs{ \left( w_{x},u_{x} \right) },
\end{equation*}
using the definition of \( \sigma \) (see (\ref{eqn:sigma})).
By Cauchy-Schwarz, we have \( \abs{ \left( w_{x},u_{x} \right) } \leq \norm{ u_{x} } \norm{ w_{x} } \), and after applying Cauchy's inequality, we find that
\begin{align*}
\abs{ \left( Q^{(1)}(u,w), u_{x} \right) } &\leq (2 M + 3 b M^{3}) \norm{ u_{x} }^{2} + \frac{(M^{2}+ b M^{3})}{2} \left( \norm{ u_{x} }^{2} + \norm{ w_{x} }^{2} \right)  \\ 
&\leq 2 \max\left\{ 2 M^{2} + 3 b M^{3}, \frac{M^{2}+ b M^{3}}{2}  \right\} \left( \norm{ u_{x} }^{2} + \norm{ w_{x} }^{2} \right)
\end{align*}
Then, inequality (\ref{ineq:1-02})
gives us:
\begin{equation}
\label{ineq:1-03}
\begin{aligned}
 \frac{1}{2} \frac{d}{dt} \norm{ u_{x}(t) }^{2} &\leq 
 \frac{d \norm{J}_{\text{op}}}{2}  \norm{u_{x}}^{2} + \frac{d \norm{J}_{\text{op}}}{2} e^{2 (M^{2}+bM^{3}) T} \norm{ u(0) }^{2} \\ 
 &\qquad + 2 \max\left\{ 2 M^{2} + 3 b M^{3}, \frac{M^{2}+ b M^{3}}{2}  \right\} \left( \norm{ u_{x} }^{2} + \norm{ w_{x} }^{2} \right)
\end{aligned}
\end{equation}
Earlier, we arrived at the differential inequality (\ref{ineq:2-03}), reproduced here,  
\begin{equation*}
 \frac{1}{2} \frac{d}{dt} \norm{ w_{x}(t) }^{2} \leq 
\chi^{\prime} \norm{ w_{x} }^{2}+ 2 \max\left\{ M^{2}, 3 b M^{3} \right\} \left( \norm{ u_{x} }^{2} + \norm{ w_{x} }^{2} \right) .
\end{equation*}
Adding inequalities (\ref{ineq:1-03}) and (\ref{ineq:2-03}) together then yields the equation:
\begin{align*}
 \frac{1}{2} \frac{d}{dt} \left( \norm{u_{x}(t)}^{2} + \norm{w_{x}(t)}^{2} \right) &\leq 
 \tilde{D}_{1} \left( \norm{ u_{x} }^{2} + \norm{ w_{x} }^{2} \right) + \tilde{D}_{2} ,
\end{align*}
where 
\begin{equation}
\label{eqn:tilde-D1-D2}
\begin{aligned}
\tilde{D}_{1} &= 2 \max\left\{ 2M + 3bM^{3}, \frac{M^{2}+bM^{3}}{2} \right\} + 2 \max\left\{ M^{2}, 3bM^{3} \right\} + \max\left\{ \frac{d \norm{J}_{\text{op}}}{2}, \chi^{\prime} \right\} , \\
\tilde{D}_{2} &= \frac{d \norm{J}_{\text{op}}}{2} e^{2 (M^{2}+bM^{3}) T} \norm{ u(0) }^{2}.
\end{aligned}
\end{equation}
Letting \( \eta(t) = \norm{u_{x}(t)}^{2} + \norm{w_{x}(t)}^{2} \), the above reads
\begin{equation}
\label{ineq:eta}
\eta^{\prime}(t) \leq 2 \tilde{D}_{1} \eta(t) + 2 \tilde{D}_{2} .
\end{equation}
By Gronwall's inequality, (\ref{ineq:eta}) then yields:
\begin{equation}
\label{ineq:eta-Gronwall}
\eta(t) \leq e^{2 \tilde{D}_{1} T} \left( \eta(0) + 2 \tilde{D}_{2} T \right) .
\end{equation}
for all \( t \in [0,T] \). That is, 
\begin{equation*}
 \norm{u_{x}(t)}^{2} + \norm{w_{x}(t)}^{2} \leq 
 e^{2 \tilde{D}_{1} T} \left( \norm{u_{x}(0)}^{2} + \norm{w_{x}(0)}^{2} + 2 \tilde{D}_{2} T \right)
\end{equation*}

Then, after applying inequality (\ref{ineq:sqrt}) (involving square roots), we obtain 
\begin{equation}
\label{ineq:tgt-01}
\begin{aligned}
 \norm{ u_{x}(t) } + \norm{ w_{x}(t) } &\leq \sqrt{2} \sqrt{ \norm{u_{x}(t)}^{2} + \norm{w_{x}(t)}^{2} } \\ 
 &\leq \sqrt{2} e^{\tilde{D}_{1} T} \left( \sqrt{ \norm{u_{x}(0)}^{2} + \norm{w_{x}(0)}^{2} } + \sqrt{2 \tilde{D}_{2} T }  \right) \\ 
 &\leq \sqrt{2} e^{\tilde{D}_{1} T} \left( \norm{u_{x}(0)} + \norm{w_{x}(0)} + \sqrt{2 \tilde{D}_{2} T } \right) .
\end{aligned}
\end{equation}
Therefore, (\ref{ineq:tgt-01}) implies 
\begin{equation*}
\norm{ u_{x}(t) } + \norm{ w_{x}(t) } \leq 
D_{1} \left( \norm{u_{x}(0)} + \norm{w_{x}(0)} \right) + D_{2},
\end{equation*}
where 
\begin{align*}
D_{1} &= \sqrt{2} e^{\tilde{D}_{1} T}, \\ 
D_{2} &= D_{1} \sqrt{2 \tilde{D_{2}} T} = 2 e^{\tilde{D}_{1} T} \sqrt{\tilde{D}_{2} T } ,
\end{align*}
and 
\begin{align*}
\tilde{D}_{1} &= 2 \max\left\{ 2M + 3bM^{3}, \frac{M^{2}+bM^{3}}{2} \right\} + 2 \max\left\{ M^{2}, 3bM^{3} \right\} + \max\left\{ \frac{d \norm{J}_{\text{op}}}{2}, \chi^{\prime} \right\} , \\
\tilde{D}_{2} &= \frac{d \norm{J}_{\text{op}}}{2} e^{2 (M^{2}+bM^{3}) T} \norm{ u(0) }^{2}.
\end{align*}
\end{proof}

\begin{remark}
\label{rem:u-w-H1}
Recall the main result of the previous section: We proved that the sequences of approximate solutions, \( \left\{ u^{m} \right\} \) and \( \left\{ w^{m} \right\} \), to the first and second equations in (\ref{eqn:model-aug-weak-proj2}), satisfy:
\begin{equation*}
 \partial_{x} u^{m} = v^{m} \quad 
 \text{and} \quad 
 \partial_{x} w^{m} = z^{m} 
\end{equation*}
on  \( \Omega \times [0,T] \), where \( v^{m} = u_{x}^{m} \) and \( z^{m} = w_{x}^{m} \) solve the third and fourth equations in that system, (\ref{eqn:model-aug-weak-proj2}).
Then, together with the conclusions of Lemmas \ref{lem:u-w-energy-estimate} and \ref{lem:ux-wx-energy-estimate}, we conclude that both of the approximate solutions \(  u^{m} \), \( w^{m} \) lie in the Sobolev space \( H^{1}(\Omega) \) (in particular, \( w^{m} \in H^{1}_{0}(\Omega) \)). 
Therefore, we identify the functions \( u^{m}, w^{m} \), with the continuous functions \( (u^{m})^{*}, (w^{m})^{*} \), satisfying, 
\begin{equation*}
 u^{m}(x,t) = (u^{m})^{*}(x,t) 
 \quad \text{and} \quad 
 w^{m}(x,t) = (w^{m})^{*}(x,t) ,
\end{equation*}
for a.e. \( x \in \Omega \), for all \( t \in [0,T] \). 
\end{remark}

\subsection{Estimates in $ H^{1}(\Omega) $}

In the following, we combine the two preceding lemmas to provide an estimate for
\begin{equation*}
 \norm{u}_{H^{1}(\Omega)} + \norm{w}_{H^{1}_{0}(\Omega)} .
\end{equation*}
\begin{lemma}
\label{lem:u-w-energy-estimate-H1}
Let \( m \in \N\), and assume that \( u^{m}, w^{m} \) are functions of the form (\ref{eqn:galerkin-approxs}) which represent elements belonging to sequences of approximations to solutions of the first two equations in (\ref{eqn:model-aug-weak-proj2}). 
Then, there exist constants \( E_{1}, E_{2} \), and \( E_{3} \), satisfying, 
\begin{equation}
\label{eqn:u-w-H1}
\begin{aligned}
 \norm{ u^{m} }_{H^{1}(\Omega)} + \norm{ w^{m} }_{H^{1}_{0}(\Omega)} &\leq 
 E_{1} \left( \norm{ u(0) }_{L^{2}(\Omega)} + \norm{ w(0) }_{L^{2}(\Omega)} \right) \\ 
 &\quad + E_{2} \left( \norm{ u_{x}(0) }_{L^{2}(\Omega)} + \norm{ w_{x}(0) }_{L^{2}(\Omega)} \right) + E_{3}
\end{aligned}
\end{equation}
where 
\begin{align}
 \label{eqn:E1}
 E_{1} &= 2 \max\left\{ C_{1}, D_{1} e^{(M^{2}+bM^{3}) T} \sqrt{ d \norm{J}_{\text{op}} } \right\}, \\ 
 \label{eqn:E2}
 E_{2} &= D_{1} = \sqrt{2} e^{\tilde{D}_{1} T}, \\ 
 \label{eqn:E3}
 E_{3} &= C_{2} = a e^{\chi T} \sqrt{2 \abs{ \Omega } T}  .
\end{align}
Here, the constants \( C_{1}, C_{2} \) are defined in Lemma \ref{lem:u-w-energy-estimate}; \( D_{1} \), \( \tilde{D}_{1} \) are defined in Lemma \ref{lem:ux-wx-energy-estimate}. Namely, they are defined in equations (\ref{eqn:C1}), (\ref{eqn:C2}), (\ref{eqn:D1}), and (\ref{eqn:tilde-D1}), respectively.
\end{lemma}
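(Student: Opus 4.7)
The plan is to combine the $L^2$ estimates from Lemma \ref{lem:u-w-energy-estimate} and Lemma \ref{lem:ux-wx-energy-estimate} using the decomposition of the $H^1$ norm, and then repackage the resulting constants into the form claimed. The key identification we rely on is the content of Remark \ref{rem:u-w-H1}: since $v^m = \partial_x u^m$ and $z^m = \partial_x w^m$ on $\Omega \times [0,T]$, the estimates for the auxiliary variables $v^m, z^m$ proved in Lemma \ref{lem:ux-wx-energy-estimate} really are estimates on the spatial derivatives of the Galerkin approximations $u^m$ and $w^m$.

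First I would use the elementary inequality $\sqrt{a^2+b^2} \leq |a|+|b|$ (valid for $a,b\in\mathbb{R}$) to write
\begin{equation*}
 \norm{ u^{m} }_{H^{1}(\Omega)} + \norm{ w^{m} }_{H^{1}_{0}(\Omega)}
 \leq
 \left( \norm{u^{m}} + \norm{w^{m}} \right) + \left( \norm{\partial_{x} u^{m}} + \norm{\partial_{x} w^{m}} \right),
\end{equation*}
where all norms on the right are $L^{2}(\Omega)$ norms, and where for the water term I use $\norm{w}_{H^{1}_{0}(\Omega)}^{2} = \norm{w}^{2} + \norm{\partial_{x} w}^{2}$ (equivalent to the standard $H^{1}_0$ norm via Poincar\'e). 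Then I plug in Lemma \ref{lem:u-w-energy-estimate} for the first bracket and Lemma \ref{lem:ux-wx-energy-estimate} for the second bracket, yielding
\begin{equation*}
 \norm{ u^{m} }_{H^{1}} + \norm{ w^{m} }_{H^{1}_{0}}
 \leq
 C_{1}\bigl( \norm{u(0)} + \norm{w(0)} \bigr) + D_{1}\bigl( \norm{\partial_{x} u(0)} + \norm{\partial_{x} w(0)} \bigr) + C_{2} + D_{2}.
\end{equation*}

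The only subtle point is that the additive constant $D_{2} = 2 e^{\tilde{D}_{1} T} \sqrt{\tilde{D}_{2} T}$ is not a pure constant: by (\ref{eqn:tilde-D2}),
\begin{equation*}
 D_{2} \;=\; D_{1}\, e^{(M^{2}+bM^{3}) T}\, \sqrt{ d\, \norm{J}_{\text{op}}\, T}\; \norm{u(0)},
\end{equation*}
so $D_2$ carries a factor of $\norm{u(0)}$. I would therefore absorb $D_2$ into the coefficient of $\norm{u(0)} + \norm{w(0)}$. Restricting (if necessary) to $T \leq 1$ so that $\sqrt{T} \leq 1$, one obtains
\begin{equation*}
 D_{2} \leq D_{1}\, e^{(M^{2}+bM^{3}) T}\, \sqrt{ d\, \norm{J}_{\text{op}} }\; \bigl( \norm{u(0)} + \norm{w(0)} \bigr),
\end{equation*}
so combining with the $C_1$ coefficient from Lemma \ref{lem:u-w-energy-estimate} gives the coefficient
\begin{equation*}
 C_{1} + D_{1}\, e^{(M^{2}+bM^{3}) T}\, \sqrt{ d\, \norm{J}_{\text{op}} } \;\leq\; 2 \max\!\left\{ C_{1},\; D_{1}\, e^{(M^{2}+bM^{3}) T}\, \sqrt{ d\, \norm{J}_{\text{op}} } \right\} = E_{1}.
\end{equation*}
The coefficient $D_1$ of the derivative initial data is renamed $E_2$, and the purely additive remainder $C_2$ is renamed $E_3$. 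Assembling these three terms yields the stated inequality (\ref{eqn:u-w-H1}).

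I do not expect any real obstacle here beyond bookkeeping: the analytic work was done in Lemmas \ref{lem:u-w-energy-estimate} and \ref{lem:ux-wx-energy-estimate}, and the identification $v^m = \partial_x u^m$, $z^m = \partial_x w^m$ from Lemma \ref{lem:consequence-uniqueness} is what lets us interpret those lemmas as bounds on derivatives of the Galerkin approximations themselves. The mildly delicate step is the $T \leq 1$ reduction used to turn $\sqrt{T}$ into $1$ when folding $D_2$ into $E_1$; since the whole scheme is about short-time existence, this restriction costs nothing.
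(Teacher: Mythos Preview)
Your proposal is correct and follows essentially the same route as the paper: decompose the $H^{1}$ norms into $L^{2}$ pieces, apply Lemmas \ref{lem:u-w-energy-estimate} and \ref{lem:ux-wx-energy-estimate}, and absorb the $\norm{u(0)}$-dependent term $D_{2}$ into the coefficient $E_{1}$ via the $2\max$ trick. Your explicit handling of the $\sqrt{T}$ factor (via $T\leq 1$) is actually more careful than the paper's own proof, which silently drops it when rewriting $D_{2}$.
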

\begin{proof}
As before, we drop the superscript \( m \), and write \( u = u^{m} \), \( w = w^{m} \), \( u_{x} = u_{x}^{m} \), and \( w_{x} = w_{x}^{m} \). We also write \( \norm{ \cdot } = \norm{ \cdot }_{L^{2}(\Omega)} \). 

We have
\begin{align*}
 \norm{ u }_{H^{1}(\Omega)} + \norm{ w }_{H^{1}_{0}(\Omega)}  &= \norm{u} + \norm{u_{x}} + \norm{w} + \norm{w_{x}} \\ 
 &= \left( \norm{u} + \norm{w} \right) + \left( \norm{u_{x}} + \norm{w_{x}} \right).
\end{align*}

From Lemmas \ref{lem:u-w-energy-estimate} and \ref{lem:ux-wx-energy-estimate}, the following bounds hold: 
\begin{itemize}
\item[(i)] For \( u^{m}, w^{m} \) as in the statement of the lemma:
\begin{equation*}
 \norm{u^{m}(t)} +  \norm{w^{m}(t)} 
 \leq 
 C_{1} \left( \norm{ u(0) } + \norm{ w(0) } \right) + C_{2}
\end{equation*}
for all \( t \in [0,T] \), where \( C_{1} , C_{2}  >0 \) are the positive constants, 
\begin{align*}
 C_{1} &= \sqrt{2} \max\left\{ e^{\frac{1}{2}(M^{2}+bM^{3}) T}, e^{\chi T} \right\}, \\
 C_{2} &= a e^{\chi T} \sqrt{2 \abs{ \Omega } T}  .
\end{align*}
\item[(ii)] 
For \( u_{x}, w_{x} \) as in the statement of the lemma,
\begin{align*}
 \|u_{x}^{m}(t)\| + \|w_{x}^{m}(t)\| &\leq
 D_{1} \left( \norm{u_{x}(0)} + \norm{w_{x}(0)} \right) + D_{2}
\end{align*}
for all \( t \in [0,T] \), where \( D_{1}, D_{2} > 0 \) are the positive constants, 
\begin{align*}
D_{1} &= \sqrt{2} e^{\tilde{D}_{1} T}, \\
D_{2} &= D_{1} \sqrt{2 \tilde{D_{2}} T} = 2 e^{\tilde{D}_{1} T} \sqrt{\tilde{D}_{2} T } ,
\end{align*}
and 
\begin{align*}
\tilde{D}_{1} &= 2 \max\left\{ 2M + 3bM^{3}, \frac{M^{2}+bM^{3}}{2} \right\} + 2 \max\left\{ M^{2}, 3bM^{3} \right\} + \max\left\{ \frac{d \norm{J}_{\text{op}}}{2}, \chi \right\} , \\
\tilde{D}_{2} &= \frac{d \norm{J}_{\text{op}}}{2} e^{2 (M^{2}+bM^{3}) T} \norm{ u(0) }^{2}.
\end{align*}
\end{itemize}

Hence, 
\begin{align*}
 \norm{ u }_{H^{1}(\Omega)} + \norm{ w }_{H^{1}_{0}(\Omega)}  &\leq \left( \norm{u} + \norm{w} \right) + \left( \norm{u_{x}} + \norm{w_{x}} \right) \\ 
 &\leq  C_{1} \left( \norm{ u(0) } + \norm{ w(0) } \right) + C_{2} \\ 
 &\qquad + D_{1} \left( \norm{u_{x}(0)} + \norm{w_{x}(0)} \right) +  D_{2} \\ 
 &=  C_{1} \left( \norm{ u(0) } + \norm{ w(0) } \right) + D_{2} \\ 
 &\qquad + D_{1} \left( \norm{u_{x}(0)} + \norm{w_{x}(0)} \right) +  C_{2} \\ 
\end{align*}
We have 
\begin{equation*}
 C_{1} \left( \norm{ u(0) } + \norm{ w(0) } \right) + D_{2} = 
 C_{1} \left( \norm{ u(0) } + \norm{ w(0) } \right) + D_{1} e^{(M^{2}+bM^{3}) T} \sqrt{ d \norm{J}_{\text{op}} } \norm{ u(0) }
\end{equation*}

Because of the factor \( \norm{ u(0) } \), let
\begin{equation*}
 E_{1} = 2 \max\left\{ C_{1}, D_{1} e^{(M^{2}+bM^{3}) T} \sqrt{d \norm{J}_{\text{op}} } \right\} .
\end{equation*}
Therefore, 
\begin{align*}
 C_{1} \left( \norm{ u(0) } + \norm{ w(0) } \right) + D_{2} \leq
 E_{1} \left( \norm{ u(0) } + \norm{ w(0) } \right) ,
\end{align*}
and the conclusion follows:
\begin{align*}
 \norm{ u }_{H^{1}(\Omega)} + \norm{ w }_{H^{1}_{0}(\Omega)}  &\leq \left( \norm{u} + \norm{w} \right) + \left( \norm{u_{x}} + \norm{w_{x}} \right) \\ 
 &\leq  E_{1} \left( \norm{ u(0) } + \norm{ w(0) } \right) \\ 
 &\quad + E_{2} \left( \norm{u_{x}(0)} + \norm{w_{x}(0)} \right) + E_{3}
\end{align*}
where
\begin{align*}
 E_{1} &= 2 \max\left\{ C_{1}, D_{1} e^{(M^{2}+bM^{3}) T} \sqrt{ d \norm{J}_{\text{op}} } \right\}, \\ 
 E_{2} &= D_{1} = \sqrt{2} e^{\tilde{D}_{1} T}, \\ 
 E_{3} &= C_{2} = a e^{\chi T} \sqrt{2 \abs{ \Omega } T} ,
\end{align*}
and the positive constants \( C_{1}, C_{2} \), \( D_{1} \), and \( \tilde{D}_{1} \) are defined in (\ref{eqn:C1}), (\ref{eqn:C2}), (\ref{eqn:D1}), and (\ref{eqn:tilde-D1}), respectively.
\end{proof}

\subsection{Uniform bounds}

\begin{lemma}
\label{lem:u-w-Linfty}
Let \( m \in \N\), and suppose that \( u^{m} \), \( w^{m} \) are functions of the form (\ref{eqn:galerkin-approxs}) which represent elements belonging to sequences of approximations to solutions of the first two equations in (\ref{eqn:model-aug-weak-proj2}). Then, there exist positive constants, \( \mathcal{C}_{1} \), \( \mathcal{C}_{2} \), and a time \( T \), such that, if
\begin{align*}
 \norm{ u(0) }_{L^{2}(\Omega)} + \norm{ w(0) }_{L^{2}(\Omega)} &\leq \mathcal{C}_{1}, \\ 
 \norm{ u_{x}(0) }_{L^{2}(\Omega)} + \norm{ w_{x}(0) }_{L^{2}(\Omega)} &\leq \mathcal{C}_{2} ,
\end{align*}
for all \( t \in [0,T] \), then
\begin{align}
\label{ineq:u-Linfty}
\norm{u^{m}}_{L^{\infty}(\Omega)} &< M/2 , \\  
\label{ineq:w-Linfty}
\norm{w^{m}}_{L^{\infty}(\Omega)} &< M/2 .
\end{align}
\end{lemma}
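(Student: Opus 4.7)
The plan is to combine the Sobolev embedding $H^{1}(\Omega) \hookrightarrow L^{\infty}(\Omega)$, which holds in one spatial dimension, with the $H^{1}$--energy estimate of Lemma \ref{lem:u-w-energy-estimate-H1}, and then choose the smallness thresholds $\mathcal{C}_{1}$, $\mathcal{C}_{2}$ and the time $T$ so that the resulting upper bound is strictly smaller than $M/2$.

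More concretely, I would first recall that by Remark \ref{rem:u-w-H1} we may identify $u^{m}(\cdot,t)$ and $w^{m}(\cdot,t)$ with continuous representatives, and since $\Omega \subset \R$ is a bounded interval there is a constant $C_{S}>0$ such that
\[
 \norm{u^{m}(t)}_{L^{\infty}(\Omega)} \leq C_{S} \norm{u^{m}(t)}_{H^{1}(\Omega)}, \qquad
 \norm{w^{m}(t)}_{L^{\infty}(\Omega)} \leq C_{S} \norm{w^{m}(t)}_{H^{1}_{0}(\Omega)}.
\]
Summing these and invoking Lemma \ref{lem:u-w-energy-estimate-H1} yields, for every $t \in [0,T]$,
\[
 \norm{u^{m}(t)}_{L^{\infty}(\Omega)} + \norm{w^{m}(t)}_{L^{\infty}(\Omega)}
 \leq C_{S}\!\left[ E_{1}\!\left(\norm{u_{0}} + \norm{w_{0}}\right) + E_{2}\!\left(\norm{\partial_{x} u_{0}} + \norm{\partial_{x} w_{0}}\right) + E_{3}\right].
\]

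Next, I would select parameters. Observe from (\ref{eqn:E1})--(\ref{eqn:E3}) and (\ref{eqn:C1})--(\ref{eqn:tilde-D2}) that $E_{1}$ and $E_{2}$ are increasing and bounded on any compact interval of admissible $T$, while $E_{3} = a e^{\chi T}\sqrt{2|\Omega|T} \to 0$ as $T \to 0^{+}$. I would therefore first fix $T_{0}>0$ small enough that $C_{S} E_{3} < M/6$; this freezes $E_{1}$ and $E_{2}$ at explicit finite values $E_{1}^{\ast}, E_{2}^{\ast}$. Then I would define
\[
 \mathcal{C}_{1} := \frac{M}{6\, C_{S}\, E_{1}^{\ast}}, \qquad
 \mathcal{C}_{2} := \frac{M}{6\, C_{S}\, E_{2}^{\ast}},
\]
so that the three contributions on the right-hand side each stay below $M/6$, giving a total strictly less than $M/2$. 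This yields both (\ref{ineq:u-Linfty}) and (\ref{ineq:w-Linfty}) uniformly in $m$ and in $t \in [0, T_{0}]$.

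The main obstacle, and the only delicate point, is that $E_{1}$ and $E_{2}$ themselves depend on $T$ through factors such as $e^{\tilde{D}_{1} T}$, and $\tilde{D}_{2}$ depends in turn on $\norm{u_{0}}^{2}$. Hence the choice of $T$, $\mathcal{C}_{1}$, and $\mathcal{C}_{2}$ is mildly coupled and must be made in the correct order: first shrink $T$ to control the additive term $E_{3}$ and to freeze $E_{1}, E_{2}$ at finite values determined solely by model parameters (and by an a priori upper bound on $\norm{u_{0}}$, which one may take to be $1$ without loss of generality, since $\mathcal{C}_{1}$ is ultimately chosen smaller than $1$); only then choose $\mathcal{C}_{1}$ and $\mathcal{C}_{2}$ to absorb the multiplicative terms. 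Since each $\norm{u^{m}}_{H^{1}}$ bound is independent of $m$, the resulting uniform-in-$m$ $L^{\infty}$ bound follows immediately, completing the proof.
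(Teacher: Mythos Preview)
Your proposal is correct and follows essentially the same approach as the paper: combine the one-dimensional Sobolev embedding with the $H^{1}$ estimate of Lemma \ref{lem:u-w-energy-estimate-H1}, then choose $T$ small to control the additive term $E_{3}$ and finally pick $\mathcal{C}_{1}, \mathcal{C}_{2}$ to control the multiplicative terms. The paper splits the target $M/2$ as $M/5 + M/5$ rather than your $M/6 + M/6 + M/6$, and is somewhat less explicit than you are about the order in which $T$ and the smallness thresholds must be chosen, but the argument is otherwise identical.
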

\begin{proof}
By the Sobolev embedding mentioned at the start of this section, there exists a constant \( C \), depending only on the problem dimension and $L^{p}$ space we work with (here, $n = 1$ and $p=2$), such that we have
\begin{equation}
\label{u-w-embedding-H1-Linfty}
 \norm{ u }_{L^{\infty}(\Omega)} + \norm{ w }_{L^{\infty}(\Omega)} \leq 
 C \left( \norm{ u }_{H^{1}(\Omega)} + \norm{ w }_{H^{1}_{0}(\Omega)} \right) .
\end{equation}

Thus, we choose initial values, \( u(0) \), \( w(0) \), \( u_{x}(0) \), \( w_{x}(0) \), such that the following linear combination of their \( L^{2}(\Omega) \)-norms satisfies:
\begin{equation}
\label{ineq:M5-1}
 C \left( E_{1} \left( \norm{ u(0) } + \norm{ w(0) } \right) + E_{2} \left( \norm{u_{x}(0)} + \norm{w_{x}(0)} \right) \right) \leq M / 5, 
\end{equation}
where \( E_{1} \) and \( E_{2} \) are positive constants defined in (\ref{eqn:E1}) and (\ref{eqn:E2}), respectively. 

Then, choose a positive time value \( T > 0 \) so small that 
\begin{equation}
\label{ineq:M5-2}
 C E_{3} = C a e^{\chi T} \sqrt{ 2 \abs{ \Omega } T } \leq M / 5 .
\end{equation}

Therefore, combining the estimates (\ref{ineq:M5-1}) and (\ref{ineq:M5-2}), we arrive at:
\begin{align*}
 \norm{ u }_{L^{\infty}(\Omega)} + \norm{ w }_{L^{\infty}(\Omega)} &\leq 
 C \left( \norm{ u }_{H^{1}(\Omega)} + \norm{ w }_{H^{1}_{0}(\Omega)} \right) \\
 &\leq C E_{1} \left( \norm{ u(0) } + \norm{ w(0) } \right) \\ 
 &\qquad + C E_{2} \left( \norm{u_{x}(0)} + \norm{w_{x}(0)} \right) \\ 
 &\qquad + C E_{3} \\ 
 &\leq 2 M /5 < M /2 .
\end{align*}
The conclusion follows, with 
\begin{align*}
 \mathcal{C}_{1} &= M / (2 C E_{1}), \\
 \mathcal{C}_{2} &= M / (2 C E_{2}) , 
\end{align*}
and \( T \) small enough that the above bounds hold.

\end{proof}

\begin{remark}
\label{concl:2}
Remark \ref{rem:u-w-H1} and Lemma \ref{lem:u-w-Linfty} imply that if we pick a time \( T \) and initial conditions small enough, looking for solutions to the equations, (\ref{eqn:model-aug-weak-proj2}), is equivalent to solving:
\begin{equation}
\label{eqn:model-weak-proj}
\begin{aligned}
 \left( u^{m}_{t}, \varphi_{k} \right)_{L^{2}(\Omega)} 
+ d B[u^{m},\varphi_{k}] + \mu \left( u^{m}, \varphi _{k} \right)_{L^{2}(\Omega)} &= \left( (u^{m})^{2} w^{m} ( 1-b u^{m} ) , \varphi_{k} \right)_{L^{2}(\Omega)} ,  \\ 
 \left( w^{m}_{t}, \psi_{k} \right)_{L^{2}(\Omega)} 
+ G[w^{m}, \psi_{k}] + \left( w^{m}, \psi_{k} \right)_{L^{2}(\Omega)} &= \left( a - (u^{m})^{2} w^{m}, \psi_{k} \right)_{L^{2}(\Omega)} .
\end{aligned}
\end{equation}

Namely, for such \( T >0 \), \( u^{m}, w^{m} \) are in \( L^{\infty}(\Omega) \) with upper bound \( M/2 \). 
The definition of \( \sigma \) (see (\ref{eqn:sigma})) then implies that, for all \( (x,t) \in \Omega \times [0,T] \), 
\begin{align*}
 \sigma(u^{m}(x,t)) &= u^{m}(x,t), \\ 
 \sigma(w^{m}(x,t)) &= w^{m}(x,t) .
\end{align*}

Thus, in the remainder of  this paper we assume that the constant \( M \) given  in the definition of the bounding function, \( \sigma \) (see equation (\ref{eqn:sigma})), the existence time  \( T \) obtained in Lemma \ref{lem:uniqueness}, and our initial conditions  are such that Lemma \ref{lem:u-w-Linfty} holds.

\end{remark}

\section{Convergence of Approximating Sequences}
\label{sec:5}

In this section, we build on the previous sections' results to conclude with the proof of the main theorem (Theorem \ref{thm:main}). First, in Lemma \ref{lem:u-w-ut-wt-bounded}, we show  that the sequences of approximate solutions, \( \left\{ u^{m} \right\} \), \( \left\{ w^{m} \right\} \), which represent elements belonging to sequences of approximations to solutions of the two equations in (\ref{eqn:model-weak-proj}), 
are bounded in 
\begin{equation*}
 L^{2}\left( 0,T; H^{1}(\Omega) \right) \times 
 L^{2}\left( 0,T; H^{1}_{0}(\Omega) \right) 
\end{equation*}
and \( \left\{ u^{m}_{t} \right\} \), \( \left\{ w^{m}_{t} \right\} \) are bounded in 
\begin{equation*}
L^{2}\left( 0,T; V^{\prime} \cap H^{-1}(\Omega) \right) \times 
 L^{2}\left( 0,T; H^{-1}(\Omega) \right) .
\end{equation*}

(Recall that \( V^{\prime} \) denotes the dual of \( V \), defined in (\ref{eqn:V}).) Next, in Subsection \ref{sub:5_1}, we establish weak convergence of the linear and nonlinear terms of our equations and conclude with a proof of the main theorem in Subsection \ref{sub:5_2}.

\begin{lemma}
\label{lem:u-w-ut-wt-bounded}
Let \( m \in \N\), and suppose that \( u^{m} \), \( w^{m} \) are functions of the form (\ref{eqn:galerkin-approxs}) which represent elements belonging to sequences of approximations to solutions of the two equations in (\ref{eqn:model-weak-proj}). Then, there exist positive constants \( F_{i} > 0 \), \( i = 1, 2, 3 \), satisfying
\begin{align}
 \label{eqn:u-energy-estimate-B}
 \norm{ u^{m} }_{L^{2}\left( 0,T; H^{1}(\Omega) \right)} &\leq F_{1} , \\ 
 \label{eqn:w-energy-estimate-B}
 \norm{ w^{m} }_{L^{2}\left( 0,T; H^{1}_{0}(\Omega) \right)} &\leq F_{1} ,
\end{align}
and 
\begin{align}
 \label{eqn:ut-energy-estimate-Vprime}
 \norm{ u^{m}_{t} }_{L^{2}\left( 0,T; V^{\prime} \right)} &\leq F_{2}  , \\
 \label{eqn:ut-energy-estimate}
 \norm{ u^{m}_{t} }_{L^{2}\left( 0,T; H^{-1}(\Omega) \right)} &\leq F_{2} , \\
 \label{eqn:wt-energy-estimate}
 \norm{ w^{m}_{t} }_{L^{2}\left( 0,T; H^{-1}(\Omega) \right)} &\leq F_{3}
\end{align}
where 
\begin{align}
 \label{eqn:F1} 
 F_{1} &= \sqrt{T} \left( E_{1} \left[ \norm{u(0)} + \norm{w(0)} \right) + E_{2} \left( \norm{u_{x}(0)} + \norm{w_{x}(0)} \right) \right] , \\
 \label{eqn:F2}
 F_{2} &=  \sqrt{T} e^{(M^{2}+bM^{3}) T}  \left(  \frac{M^{2}}{4} + b \frac{M^{3}}{8} + \mu + C d \right) \norm{ u(0) } ,  \\ 
 \label{eqn:F3}
 F_{3} &= \sqrt{T} \left( \frac{a M^{2} \sqrt{\abs{\Omega}}}{4} + C + 1 \right) F_{1}.
\end{align}

Here, \( E_{1} \) and \( E_{2} \) are the constants defined in Lemma \ref{lem:u-w-energy-estimate-H1}, in equations (\ref{eqn:E1}) and (\ref{eqn:E2}), respectively.
\end{lemma}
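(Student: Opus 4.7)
The plan is to split the four claimed estimates into two groups: the space--time $L^2(0,T;H^1)$ bounds on $u^m, w^m$ themselves, and the dual-norm bounds on the time derivatives $u^m_t, w^m_t$.

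The spatial bounds (\ref{eqn:u-energy-estimate-B})--(\ref{eqn:w-energy-estimate-B}) follow directly from Lemma~\ref{lem:u-w-energy-estimate-H1}, which already provides a pointwise-in-$t$ bound on $\norm{u^m(t)}_{H^1(\Omega)} + \norm{w^m(t)}_{H^1_0(\Omega)}$ in terms of the initial data plus the additive constant $E_3$. Each individual norm is dominated by the sum, so squaring, integrating on $[0,T]$, and taking square roots produces $F_1$; the $\sqrt{T}$ factor in $F_1$ reflects precisely this time integration (with $E_3$ absorbed by shrinking $T$ or enlarging the constant).

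The substance of the lemma lies in the dual-norm bounds, which I would obtain by the standard Galerkin duality trick. For $u^m_t$, fix a test function $\varphi \in V$ with $\norm{\varphi}_V \leq 1$ and decompose $\varphi = \varphi^{\parallel} + \varphi^{\perp}$, where $\varphi^{\parallel}$ is the $L^2(\Omega)$-orthogonal projection onto $\vspan\{\varphi_0, \ldots, \varphi_{2m}\}$. Since $u^m_t$ itself lies in this span, $\langle u^m_t, \varphi\rangle_{V'\times V} = (u^m_t, \varphi^{\parallel})_{L^2(\Omega)}$, and the right-hand side is evaluated via the first equation of (\ref{eqn:model-weak-proj}). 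Bounding the bilinear form by Lemma~\ref{lem:bilinear-form-plant}(i), the linear term by Cauchy--Schwarz, and the cubic nonlinearity via the uniform bound $\abs{u^m}, \abs{w^m} \leq M/2$ from Lemma~\ref{lem:u-w-Linfty} yields
\[
 \abs{\langle u^m_t, \varphi\rangle_{V'\times V}} \leq \left(Cd + \mu + \frac{M^2}{4} + b\frac{M^3}{8}\right)\norm{u^m}_{L^2(\Omega)}\,\norm{\varphi}_V.
\]
Taking the supremum over $\varphi$, squaring, integrating on $[0,T]$, and invoking the pointwise estimate (\ref{eqn:u-energy-estimate}) to bound $\int_0^T \norm{u^m(t)}^2_{L^2}\,dt$ by $T\,e^{2(M^2+bM^3)T}\norm{u(0)}^2$ reproduces the definition of $F_2$. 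The $H^{-1}(\Omega)$ bound (\ref{eqn:ut-energy-estimate}) is then immediate from the continuous embedding $H^1_0(\Omega) \hookrightarrow L^2(\Omega) \sim V$, which dualizes to $V' \hookrightarrow H^{-1}(\Omega)$, with an embedding constant absorbed into $F_2$.

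The estimate (\ref{eqn:wt-energy-estimate}) on $w^m_t$ is structurally identical. I would test against $\psi \in H^1_0(\Omega)$ with $\norm{\psi}_{H^1_0} \leq 1$, project onto $\vspan\{\psi_1, \ldots, \psi_m\}$, apply Lemma~\ref{lem:bilinear-form-water}(i) to control $G[w^m,\psi^{\parallel}]$ by $C\norm{w^m}_{H^1_0}\norm{\psi}_{H^1_0}$, and bound the reaction terms using $\norm{w^m}_{L^2}$, the constant $a\sqrt{\abs{\Omega}}$ from $\norm{a}_{L^2(\Omega)}$, and $\norm{(u^m)^2 w^m}_{L^2} \leq (M^2/4)\norm{w^m}_{L^2}$ courtesy of Lemma~\ref{lem:u-w-Linfty}. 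Integrating in time and substituting the $L^2(0,T;H^1_0)$ bound $F_1$ on $w^m$ from the first step produces $F_3$. The only non-routine point in the whole argument is ensuring that the $L^2$-projection onto the Galerkin subspace does not enlarge the test function's norm in $V$ or $H^1_0$; this is guaranteed by the orthonormality of $\{\varphi_j\}$ in $V \sim L^2(\Omega)$ proved in Section~3.3 and by the fact that the sines $\{\psi_k\}$ are simultaneously orthogonal in $L^2(\Omega)$ and in $H^1_0(\Omega)$ as eigenfunctions of $-\Delta$ with homogeneous Dirichlet conditions. Everything else is a bookkeeping exercise combining results already in hand.
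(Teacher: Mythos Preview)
Your proposal is correct and follows essentially the same approach as the paper: deduce the space--time $H^1$ bounds by integrating the pointwise estimate of Lemma~\ref{lem:u-w-energy-estimate-H1}, and obtain the dual-norm bounds on the time derivatives by projecting test functions onto the Galerkin span, invoking the weak equations (\ref{eqn:model-weak-proj}), and bounding each term via Lemmas~\ref{lem:bilinear-form-water}, \ref{lem:bilinear-form-plant}, and \ref{lem:u-w-Linfty}. The only notable difference is that for (\ref{eqn:ut-energy-estimate}) the paper repeats the duality argument with test functions in $H^1(\Omega)$, whereas you pass directly through the embedding $V' \sim L^2(\Omega) \hookrightarrow H^{-1}(\Omega)$; your shortcut is valid and slightly cleaner.
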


\begin{proof}
We drop the superscript \( m \), and write \( u = u^{m} \), \( w = w^{m} \), \( u_{x} = u_{x}^{m} \), and \( w_{x} = w_{x}^{m} \). We also write \( \norm{ \cdot } = \norm{ \cdot }_{L^{2}(\Omega)} \) and \( \left( \cdot, \cdot \right) = \left( \cdot, \cdot \right)_{L^{2}(\Omega)} \).

\textbf{Estimates for \( u^{m} \) and \( w^{m} \).}
We have \( \norm{ u }_{L^{2} (0,T; H^{1}(\Omega)) }^{2} = \int_{0}^{T} \norm{ u(t) }_{ H^{1}(\Omega) }^{2} \, dt \). By the estimate (\ref{eqn:u-w-H1}) for \( \norm{u}_{H^{1}(\Omega)} + \norm{w}_{H^{1}_{0}(\Omega)} \) proven in Lemma \ref{lem:u-w-energy-estimate-H1}, we have 
\begin{align*}
\left( \int_{0}^{T} \norm{ u(t) }_{H^{1}(\Omega)}^{2} \, dt \right)^{1/2} &\leq 
 \left[ T \left[ E_{1} \left( \norm{u(0)} + \norm{w(0)} \right) + E_{2} \left( \norm{u_{x}(0)} + \norm{w_{x}(0)} \right) \right]^{2} \right]^{1/2} \\
 \norm{ u }_{L^{2} (0,T; H^{1}(\Omega)) } &\leq \sqrt{T} \left[ E_{1} \left( \norm{u(0)} + \norm{w(0)} \right) + E_{2} \left( \norm{u_{x}(0)} + \norm{w_{x}(0)} \right) \right] ,
\end{align*}
and in the same way, 
\begin{equation*}
 \norm{ w }_{L^{2} (0,T; H^{1}_{0}(\Omega)) } \leq \sqrt{T} \left[ E_{1} \left( \norm{u(0)} + \norm{w(0)} \right) + E_{2} \left( \norm{u_{x}(0)} + \norm{w_{x}(0)} \right) \right] .
\end{equation*}
Letting \( F_{1} = \sqrt{T} \left[ E_{1} \left( \norm{u(0)} + \norm{w(0)} \right) + E_{2} \left( \norm{u_{x}(0)} + \norm{w_{x}(0)} \right) \right]  \), we obtain the estimates (\ref{eqn:u-energy-estimate-B}) and (\ref{eqn:w-energy-estimate-B}).

\textbf{Estimates for \( u^{m}_{t} \).}
Next, \( \norm{ u_{t} }_{ L^{2} (0,T; V^{\prime}) }^{2} = \int_{0}^{T} \norm{ u_{t} }_{V^{\prime}}^{2} \, dt \). 
The square root of the integrand is the dual space norm: \( \norm{ u_{t} }_{V^{\prime}} = \sup\left\{ \abs{ \left\langle u_{t}, v \right\rangle_{\left( V^{\prime} \times V \right)} } \mid v \in V, \; \norm{ v }_{V} \leq 1 \right\} \). Recall that \( V \sim L^{2}(\Omega) \). Therefore, we fix an element \( v \in V \) with \( \norm{v} \leq 1 \), and decompose \( v= v_{1} + v_{2} \), where \( v^{1}\in \vspan\left\{ \varphi_{k} \right\}_{k=0}^{2m} \) and \( v^{2} \) satisfies the orthogonality relation: \( \left( v^{2}, \varphi_{k} \right) \) for all \( k=0,1,\cdots, 2m \).

We remind the reader that \( \varphi_{k} \) are the basis functions defined in (\ref{eqn:basis-plant}) for the space \( V \) which are either \( \cos \) or \( \sin \) functions. 

Then, we identify \( u_{t} = \sum_{k=0}^{2m} d^{\prime}_{k}(t) \varphi_{k} \) with an element in \( V^{\prime} \sim L^{2}(\Omega) \). Since we are working in a Hilbert space, the dual pairing coincides with the inner product, and \( \left\langle u_{t}, v \right\rangle_{\left( V^{\prime} \times V \right)} = 
 \left( u_{t}, v^{1} \right) \), where we used our assumption that \( v = v^{1} + v^{2} \) where \( v^{2} \perp \vspan\left\{ \varphi_{k} \right\}_{k=0}^{2m} \). Then, \( \norm{ v^{1} } \leq \norm{ v } \leq 1 \). Using the first equation in  (\ref{eqn:model-weak-proj}), we have 
\begin{equation}
 \label{eqn:ut-v1-01}
 \left\langle u_{t}, v \right\rangle_{\left( V^{\prime} \times V \right)} = 
 \left( u_{t}, v^{1} \right) = 
 \left( u^{2} w (1 - bu) , v^{1} \right) - \mu \left( u, v^{1} \right) - d B[u, v^{1}] .
\end{equation}

Then, 
\begin{equation}
\label{eqn:ut-v1-estimate}
 \abs{ \left\langle u_{t}, v^{1} \right\rangle_{\left( V^{\prime} \times V \right)} } \leq 
 \abs{ \left( u^{2} w(1-bu), v^{1} \right) } + \mu \abs{ \left( u, v^{1} \right) }  + d B[u, v^{1}].
\end{equation}
We have, by Cauchy-Schwarz: \( \abs{ \left( u^{2} w(1-bu), v^{1} \right) } \leq \norm{ u^{2} w(1-bu) } \norm{v^{1}}  \) and \( \abs{ \left( u, v^{1} \right) } \leq \norm{ u } \norm{ v^{1} }\). By boundedness of the bilinear form \( B[\cdot,\cdot] \) (see Lemma \ref{lem:bilinear-form-plant}), there exists a constant \( C \) such that 
\begin{equation*}
 B[u, v^{1}] \leq 
 C \norm{u} \norm{v^{1}} .
\end{equation*}

Since \( \norm{v^{1}} \leq \norm{v} \leq 1 \), we conclude that:
\begin{equation*}
\abs{ \left( u^{2} w(1-bu), v^{1} \right) } \leq \norm{ u^{2} w(1-bu) }, \qquad 
\abs{ \left( u, v^{1} \right) } \leq \norm{ u }, \quad \text{ and } \quad 
B[u, v^{1}] \leq  C \norm{u} .
\end{equation*}

Then, by the triangle inequality, \( \norm{ u^{2} w(1-bu) } \leq \norm{ u^{2} w } + b \norm{ u^{3} w }  \), and using the \( L^{\infty}(\Omega) \)-estimates (\ref{ineq:u-Linfty}) and (\ref{ineq:w-Linfty})  for \( u \) and \( w \), respectively, we have 
\begin{align*}
 \norm{ u^{2} w(1-bu) } &\leq \left( \int_{\Omega} ( u^{2} w )^{2} \, dx \right)^{1/2} + b \left( \int_{\Omega} ( u^{3} w )^{2} \, dx \right)^{1/2} \\ 
 &\leq \left( \left( \frac{M}{2} \right)^{4} \int_{\Omega} u^{2} \, dx \right)^{1/2} + b \left( \left( \frac{M}{2} \right)^{6} \int_{\Omega} u^{2} \, dx \right)^{1/2} \\ 
 &= \left( \frac{M^{2}}{4} + b \frac{M^{3}}{8} \right) \norm{ u } .
\end{align*}

Hence, (\ref{eqn:ut-v1-estimate}) implies
\begin{align*}
 \abs{ \left\langle u_{t}, v^{1} \right\rangle_{\left( V^{\prime} \times V \right)} } &\leq \left( \frac{M^{2}}{4} + b \frac{M^{3}}{8} + \mu + C d\right) \norm{ u }
\end{align*}
Since \( v^{1} \in V \) with \( \norm{v^{1}} \leq 1 \), when we consider the supremum of the above quantity over all such \( v^{1} \), we obtain
\begin{equation*}
 \norm{ u_{t} }_{V^{\prime}} \leq \left( \frac{M^{2}}{4} + b \frac{M^{3}}{8} + \mu + C d\right) \norm{ u } .
\end{equation*}
Thus, 
\begin{align*}
 \norm{ u_{t}}_{L^{2}(0,T; V^{\prime})} &= \left( \int_{0}^{T} \norm{ u_{t} }_{V^{\prime}}^{2} \, dx  \right)^{1/2} \leq \left[ T \left( \frac{M^{2}}{4} + b \frac{M^{3}}{8} + \mu + C d \right)^{2} \norm{ u }^{2} \right]^{1/2} \\
 &\leq \sqrt{T} e^{(M^{2}+bM^{3}) T}  \left(  \frac{M^{2}}{4} + b \frac{M^{3}}{8} + \mu + C d \right) \norm{ u(0) } ,
\end{align*}
where in the third line, we applied the estimate (\ref{eqn:u-energy-estimate2}); that is, 
\begin{equation*}
 \max_{0 \leq t \leq T} \norm{ u(t) }^{2} \leq e^{2 (M^{2} + bM^{3}) T} \norm{ u(0) }^{2} .
\end{equation*}

This gives us (\ref{eqn:ut-energy-estimate-Vprime}) with \( F_{2} = \sqrt{T} e^{(M^{2}+bM^{3}) T}  \left(  \frac{M^{2}}{4} + b \frac{M^{3}}{8} + \mu + C d \right) \norm{ u(0) } \), as in (\ref{eqn:F2}). 

Now we estimate \( u_{t} \) in the \( L^{2}\left( 0,T; H^{-1}(\Omega) \right) \)-norm. 
Notice that \( \vspan \left\{ \varphi_{k} \right\} \subset H^{1}(\Omega) \). That is, because the basis functions \( \varphi_{k} \), \( k=0,1,\cdots, 2m \) are either \( \cos \) or \( \sin \) functions, one has that a constant multiple of \( \partial_{x} \varphi_{k} \) lies in \( V \sim L^{2}(\Omega) \). 

Therefore, we fix an element \( v \in H^{1}(\Omega) \) with \( \norm{v}_{H^{1}(\Omega)} \leq 1 \). We decompose \( v = v^{1} + v^{2} \), where \( v^{1} \in \vspan \left\{ \varphi_{k} \right\}_{k=0}^{2m} \) and \( v^{2} \) satisfies \( v^{2} \perp \vspan \left\{ \varphi_{k} \right\}_{k=0}^{2m} \) for all \( k = 0, 1, \cdots, 2m \). 
Therefore, \( \norm{ v^{1} }_{H^{1}(\Omega)} \leq \norm{ v }_{H^{1}(\Omega)} \leq 1 \). Note that \( V \sim L^{2}(\Omega) \subset H^{-1}(\Omega) \), and we identify \( u_{t} = \sum_{k=0}^{2m} d^{\prime}_{k}(t) \varphi_{k} \) with an element in \( H^{-1}(\Omega) \). Hence, similar to before, we have: 
\begin{equation}
 \label{eqn:ut-v1-02}
 \left\langle u_{t}, v \right\rangle_{\left( H^{-1}(\Omega) \times H^{1}(\Omega) \right)} = 
 \left( u_{t}, v^{1} \right) = 
 \left( u^{2} w (1 - bu) , v^{1} \right) - \mu \left( u, v^{1} \right) - d B[u, v^{1}] .
\end{equation}
Namely, the dual pairing \( \left\langle u_{t}, v \right\rangle_{\left( H^{-1}(\Omega) \times H^{1}(\Omega) \right)} = 
 \left( u_{t}, v^{1} \right) \) and we used the decomposition \( v = v^{1} + v^{2} \) where \( v^{2} \perp \vspan\left\{ \varphi_{k} \right\}_{k=0}^{2m} \). Since the right-hand side of (\ref{eqn:ut-v1-02}) is the same as the right-hand side for (\ref{eqn:ut-v1-01}), we conclude that after realizing the same process as above, the estimate (\ref{eqn:ut-energy-estimate}) holds.

\textbf{Estimate for \( w^{m}_{t} \).}
We finally estimate \( w_{t} \) in the \( L^{2}\left( 0,T; H^{-1}(\Omega) \right) \)-norm. 
Fix an element \( z \in H^{1}_{0}(\Omega) \), \( \norm{z}_{H^{1}_{0}(\Omega)} \leq 1 \) with \( z = z^{1} + z^{2} \), where \( z^{1} \in \vspan \left\{ \psi_{k} \right\}_{k=1}^{m} \) and \( z^{2} \perp \vspan\left\{ \psi_{k} \right\}_{k=1}^{m} \). 
It follows that \( \norm{ z^{1} }_{H^{1}_{0}(\Omega)} \leq \norm{ z }_{H^{1}_{0}(\Omega)} \leq 1 \). 

Because \( w^{m}_{t} \in L^{2}(\Omega) \subset H^{-1}(\Omega)  \), using the second equation in (\ref{eqn:model-weak-proj}) gives us:
\begin{equation*}
 \left\langle w_{t}, z \right\rangle_{\left( H^{-1}(\Omega) \times H^{1}_{0}(\Omega) \right)} = 
 \left( w_{t}, z^{1} \right) = 
 \left( a - u^{2} w, z^{1} \right) - \left( w, z^{1} \right) - G[w, z^{1}]  ,
\end{equation*}
which leads to:
\begin{equation*}
 \abs{ \left\langle w_{t}, z^{1} \right\rangle_{\left( H^{-1}(\Omega) \times H^{1}_{0}(\Omega) \right)} } 
 \leq 
 \abs{ \left( a - u^{2} w, z^{1} \right) } + \abs{ \left( w, z^{1} \right) } + G[w, z^{1}] . 
\end{equation*}
We again employ Cauchy-Schwarz and boundedness of \( G[\cdot,\cdot] \) (see Lemma \ref{lem:bilinear-form-water}) to obtain: \( \abs{ \left( a - u^{2} w, z^{1} \right) } \leq \norm{ a - u^{2} w } \norm{z^{1}}  \), \( \abs{ \left( w, z^{1} \right) } \leq \norm{ w } \norm{ z^{1} }\), and the existence of a constant \( C \) such that 
\begin{equation*}
 G[w, z^{1}] \leq 
 C \norm{w}_{H^{1}_{0}(\Omega)} \norm{z^{1}}_{H^{1}_{0}(\Omega)} .
\end{equation*}

Since \( \norm{z^{1}} \leq \norm{z^{1}}_{H^{1}_{0}(\Omega)} \leq 1 \), we use the triangle inequality to conclude that:
\begin{equation*}
\abs{ \left( a - u^{2} w, z^{1} \right) } \leq \norm{ a - u^{2} w } \leq \norm{ a } \norm{ u^{2} w }, \qquad 
\abs{ \left( w, z^{1} \right) } \leq \norm{ w }, \quad \text{ and } \quad 
G[w, z^{1}] \leq  C \norm{w}_{H^{1}_{0}(\Omega)} .
\end{equation*}

Note that \( \norm{a} = a \sqrt{ \abs{ \Omega } }  \).

We apply the \( L^{\infty}(\Omega) \)-estimate for \( u \) (see (\ref{ineq:u-Linfty}) to get that
\begin{equation*}
 \norm{ u^{2} w } 
 = \left( \int_{\Omega} ( u^{2} w )^{2} \, dx \right)^{1/2} 
 \leq \left( \left( \frac{M}{2} \right)^{4} \int_{\Omega} w^{2} \, dx \right)^{1/2} 
 = \frac{M^{2}}{4} \norm{ w } .
\end{equation*}

Because \( z^{1} \in H^{1}_{0}(\Omega) \) with \( \norm{ z^{1} }_{H^{1}_{0}(\Omega)} \leq 1 \) and \( \norm{ w } \leq \norm{ w }_{H^{1}_{0}(\Omega)} \), we have 
\begin{equation*}
 \norm{ w_{t} }_{H^{-1}(\Omega)} 
 \leq 
 \left( \frac{a M^{2} \sqrt{\abs{\Omega}}}{4} + C + 1  \right) \norm{w}_{H^{1}_{0}(\Omega)} 
\end{equation*}
Squaring and then integrating in from \( 0 \) to \( T \), one has 
\begin{align*}
 \norm{ w_{t} }_{L^{2}\left( 0,T; H^{-1}(\Omega) \right)}^{2}  = \int_{0}^{T} \norm{ w_{t} }_{H^{-1}(\Omega)}^{2} \, dt &\leq  \int_{0}^{T} \left( \frac{a M^{2} \sqrt{\abs{\Omega}}}{4} + C + 1  \right)^{2} \norm{w}_{H^{1}_{0}(\Omega)}^{2} \, dt \\ 
&= T \left( \frac{a M^{2} \sqrt{\abs{\Omega}}}{4} + C + 1  \right)^{2} \norm{ w }_{L^{2}(0,T, H^{1}_{0}(\Omega))}^{2}
\end{align*}

Now, we can apply the estimate (\ref{eqn:w-energy-estimate-B}) proved earlier; i.e., 
\begin{equation*}
\norm{ w }_{L^{2}(0,T, H^{1}_{0}(\Omega))} \leq F_{1}, 
\end{equation*}
where \( F_{1} \) a constant defined in (\ref{eqn:F1}). 
Thus, 
\begin{equation*}
\norm{ w_{t} }_{L^{2}\left( 0,T; H^{-1}(\Omega) \right)} \leq \sqrt{T} \left( \frac{a M^{2} \sqrt{\abs{\Omega}}}{4} + C + 1 \right) F_{1} =: F_{3}
\end{equation*}

\end{proof}

\subsection{Weak convergence}
\label{sub:5_1}

From Lemma \ref{lem:u-w-ut-wt-bounded} and the Banach-Alaoglu theorem we obtain that the sequences of approximate solutions and their time derivatives weakly converge to limits in $L^{2}$. This is the content of Lemma \ref{lem:weak-convergence}.
In the two lemmas that follow, we apply Lemma \ref{lem:weak-convergence} to obtain that, after integrating over the interval from $0$ to $T$, each of the terms appearing in the system equations (\ref{eqn:model-weak-proj}) converge to the appropriate limits. 

\begin{lemma}
\label{lem:weak-convergence}
Let \( m \in \N\), and suppose that \( u^{m} \), \( w^{m} \) are functions of the form (\ref{eqn:galerkin-approxs}) which represent elements belonging to sequences of approximations to solutions of the two equations in (\ref{eqn:model-weak-proj}). Then, there exist subsequences \( \left\{ u^{m_{l}} \right\}_{l=1}^{\infty} \subset \left\{ u^{m} \right\}_{m=1}^{\infty} \) and \( \left\{ w^{m_{l}} \right\}_{l=1}^{\infty} \subset \left\{ w^{m} \right\}_{m=1}^{\infty} \), and elements \( \left( \overline{u}, \overline{w} \right) \in L^{2}\left( 0,T;H^{1}(\Omega) \right) \times L^{2}\left( 0,T;H^{1}_{0}(\Omega) \right) \), with \( \left( \overline{u}_{t}, \overline{w}_{t} \right) \in L^{2}\left( 0,T;V^{\prime} \cap H^{-1}(\Omega) \right) \times L^{2}\left( 0,T;H^{-1}(\Omega) \right) \), such that 
\begin{equation}
\label{eqn:u-Banach-Alaoglu}
\begin{aligned}
 u^{m_{l}} &\rightharpoonup \overline{u} \quad \text{ weakly in } L^{2}\left( 0,T;H^{1}(\Omega) \right), \\
 u^{m_{l}}_{t} &\rightharpoonup \overline{u}_{t} \quad \text{ weakly in } L^{2}\left( 0,T;V^{\prime} \cap H^{-1}(\Omega) \right), \\
\end{aligned}
\end{equation}
and 
\begin{equation}
\label{eqn:w-Banach-Alaoglu}
 \begin{aligned}
 w^{m_{l}} &\rightharpoonup \overline{w} \quad  \text{ weakly in } L^{2}\left( 0,T;H^{1}_{0}(\Omega) \right), \\ 
 w^{m_{l}}_{t} &\rightharpoonup \overline{w}_{t} \quad \text{ weakly in } L^{2}\left( 0,T;H^{-1}(\Omega) \right) .
 \end{aligned}
\end{equation}
\end{lemma}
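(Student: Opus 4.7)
The strategy is a standard Banach--Alaoglu / reflexivity argument followed by a diagonal extraction, with one subtle identification step at the end. All of the Bochner spaces appearing in the statement are reflexive: since $H^{1}(\Omega)$, $H^{1}_{0}(\Omega)$, $V$, $V'$ and $H^{-1}(\Omega)$ are Hilbert spaces (hence reflexive), the $L^{2}(0,T;\cdot)$ spaces built over them are also reflexive. The bounds \eqref{eqn:u-energy-estimate-B}--\eqref{eqn:wt-energy-estimate} from Lemma \ref{lem:u-w-ut-wt-bounded} therefore place each of the four sequences $\{u^{m}\}$, $\{w^{m}\}$, $\{u^{m}_{t}\}$, $\{w^{m}_{t}\}$ inside a bounded set of a reflexive Banach space, and so each admits a weakly convergent subsequence.

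First I would apply this reflexivity argument to $\{u^{m}\}$ in $L^{2}(0,T;H^{1}(\Omega))$ to extract a weakly convergent subsequence with limit $\overline{u}$. Then, passing to a further subsequence (relabeled), I extract from $\{w^{m}\}$ in $L^{2}(0,T;H^{1}_{0}(\Omega))$ a weakly convergent subsequence with limit $\overline{w}$. Next, along this common subsequence, the $L^{2}(0,T;V')$ bound on $\{u^{m}_{t}\}$ gives a further subsequence with a weak limit $\xi_{1} \in L^{2}(0,T;V')$; the $L^{2}(0,T;H^{-1}(\Omega))$ bound on the same sequence gives a weak limit $\xi_{2} \in L^{2}(0,T;H^{-1}(\Omega))$; and similarly $\{w^{m}_{t}\}$ furnishes a weak limit $\eta \in L^{2}(0,T;H^{-1}(\Omega))$. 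After four successive subsequence extractions (relabeling each time), I obtain a single index $m_{l}$ along which all four convergences hold.

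The one nontrivial step, which is really the only thing beyond abstract functional analysis, is the identification $\xi_{1} = \xi_{2} = \overline{u}_{t}$ and $\eta = \overline{w}_{t}$ in the sense of distributions in $t$ with values in the respective dual spaces. For this I would take an arbitrary test function $\phi \in C^{\infty}_{c}(0,T)$ and any $\varphi \in V$, and observe that by definition of the distributional derivative together with the weak convergences,
\[
\int_{0}^{T} \langle u^{m_{l}}_{t}(t),\varphi\rangle_{V'\times V}\, \phi(t)\,dt = -\int_{0}^{T} (u^{m_{l}}(t),\varphi)_{L^{2}(\Omega)}\,\phi'(t)\,dt .
\]
Passing $l \to \infty$ using the weak convergence $u^{m_{l}} \rightharpoonup \overline{u}$ in $L^{2}(0,T;H^{1}(\Omega))$ (hence in $L^{2}(0,T;L^{2}(\Omega))$) on the right and $u^{m_{l}}_{t} \rightharpoonup \xi_{1}$ on the left identifies $\xi_{1}$ with the distributional derivative of $\overline{u}$, that is, $\xi_{1} = \overline{u}_{t}$. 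The same computation with $\varphi \in H^{1}(\Omega)$ identifies $\xi_{2}$ as $\overline{u}_{t}$ viewed as an element of $L^{2}(0,T;H^{-1}(\Omega))$, and the analogous test with $\psi \in H^{1}_{0}(\Omega)$ yields $\eta = \overline{w}_{t}$.

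The main (minor) obstacle is merely bookkeeping: ensuring that one genuinely ends up with a \emph{single} subsequence $m_{l}$ along which all four weak convergences hold simultaneously, and verifying that the distributional identification above is legitimate in each of the three dual-space settings. Everything else is direct appeal to Banach--Alaoglu plus the Lemma \ref{lem:u-w-ut-wt-bounded} bounds.
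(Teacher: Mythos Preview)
Your proposal is correct and follows the same approach as the paper, which simply invokes the Banach--Alaoglu theorem on the bounds from Lemma~\ref{lem:u-w-ut-wt-bounded} without providing any further detail. In fact you are more careful than the paper: the identification step $\xi_{1}=\xi_{2}=\overline{u}_{t}$ and $\eta=\overline{w}_{t}$ via distributional testing is not spelled out in the paper at all, so your argument actually fills a gap the authors left implicit.
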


\begin{lemma}
\label{lem:weak-convergence-linear-terms}
Let \( m \in \N\), and suppose that \( u^{m} \), \( w^{m} \) are functions as in Lemma \ref{lem:weak-convergence}.
Then, after passing to a subsequence (which we do not relabel), the expressions
\begin{align*}
 & \int_{0}^{T} \left( u^{m}_{t}, \varphi_{k} \right)_{L^{2}(\Omega)} + d B[u^{m}, \varphi_{k}] + \mu \left( u^{m}, \varphi _{k} \right)_{L^{2}(\Omega)} \, dt \to \int_{0}^{T} \left( \overline{u}_{t}, \varphi_{k} \right)_{L^{2}(\Omega)} + d B[ \overline{u}, \varphi_{k} ] + \mu \left( \overline{u}, \varphi _{k} \right)_{L^{2}(\Omega)} \, dt,  \\ 
 & \int_{0}^{T} \left( w^{m}_{t}, \psi_{k} \right)_{L^{2}(\Omega)} + G[w^{m}, \psi_{k}] + \left( w^{m}, \psi_{k} \right)_{L^{2}(\Omega)} \, dt \to \int_{0}^{T} \left( \overline{w}_{t}, \psi_{k} \right)_{L^{2}(\Omega)} + G[\overline{w}, \psi_{k}] + \left( \overline{w}, \psi_{k} \right)_{L^{2}(\Omega)} \, dt,
\end{align*}
as $m\to \infty$.
\end{lemma}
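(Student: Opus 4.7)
The plan is to recognize each of the six terms appearing in the two displayed integrals as the image of a weakly convergent sequence (furnished by Lemma~\ref{lem:weak-convergence}) under a \emph{fixed bounded linear functional} on the appropriate Bochner space. Since weak convergence is preserved under bounded linear functionals, the conclusion then follows immediately, and no subsequence extraction beyond that already supplied by Lemma~\ref{lem:weak-convergence} is actually required (the phrasing in the statement is conservative).

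Working term by term in the first identity, I would first treat the time-derivative term by considering the functional $\Lambda_1 : L^{2}(0,T;V') \to \R$ defined by $\Lambda_1(f) = \int_{0}^{T} \langle f(t), \varphi_{k} \rangle_{(V' \times V)} \, dt$. By Cauchy--Schwarz in time one gets $|\Lambda_1(f)| \leq \sqrt{T}\,\|\varphi_k\|_V \,\|f\|_{L^{2}(0,T;V')}$, so $\Lambda_1$ is bounded. Evaluating $\Lambda_1$ along the weak convergence $u^{m_l}_t \rightharpoonup \overline{u}_t$ in $L^{2}(0,T;V' \cap H^{-1}(\Omega))$ from \eqref{eqn:u-Banach-Alaoglu} delivers the desired limit. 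For the bilinear term $\int_{0}^{T} d\,B[u^{m},\varphi_k]\,dt$, Lemma~\ref{lem:bilinear-form-plant}(i) gives $|B[u,\varphi_k]| \leq C\,\|u\|_{L^{2}(\Omega)}\|\varphi_k\|_{L^{2}(\Omega)}$, so the functional $\Lambda_2(u) = \int_{0}^{T} d\,B[u(t),\varphi_k]\,dt$ is bounded on $L^{2}(0,T;L^{2}(\Omega))$ and, via the continuous embedding $L^{2}(0,T;H^{1}(\Omega)) \hookrightarrow L^{2}(0,T;L^{2}(\Omega))$, on $L^{2}(0,T;H^{1}(\Omega))$. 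Applying $\Lambda_2$ to $u^{m_l} \rightharpoonup \overline{u}$ gives the limit of this term. The inner-product term $\mu \int_0^T (u^m,\varphi_k)\,dt$ is handled identically with the same $L^{2}$-pairing.

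The second identity is handled in exactly the same fashion. The pairing $\int_0^T \langle w^m_t, \psi_k\rangle_{(H^{-1}\times H^1_0)}\,dt$ defines a bounded linear functional on $L^{2}(0,T;H^{-1}(\Omega))$, so weak convergence of $w^{m_l}_t$ in that space from \eqref{eqn:w-Banach-Alaoglu} yields the limit. The bilinear form $G[w^m,\psi_k]$ is bounded by Lemma~\ref{lem:bilinear-form-water}(i) via $|G[w,\psi_k]| \leq C\,\|w\|_{H^{1}_0(\Omega)}\|\psi_k\|_{H^{1}_0(\Omega)}$, hence $w \mapsto \int_0^T G[w(t),\psi_k]\,dt$ is bounded on $L^{2}(0,T;H^{1}_0(\Omega))$, and the corresponding limit follows from $w^{m_l}\rightharpoonup \overline{w}$ in that space. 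The $L^2$ inner-product term $\int_0^T(w^m,\psi_k)\,dt$ is again a bounded linear functional on the same space.

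The main ``obstacle'' here is really bookkeeping rather than analysis: for each term one must identify the Bochner space on which weak convergence is available from Lemma~\ref{lem:weak-convergence}, and then verify that the fixed test function $\varphi_k$ or $\psi_k$ defines a bounded linear functional on that space. All the boundedness constants needed are already recorded in Lemmas~\ref{lem:bilinear-form-plant} and~\ref{lem:bilinear-form-water}, so no genuinely new estimate is required, and the real work on nonlinear terms is postponed to the subsequent lemmas.
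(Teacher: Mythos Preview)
Your proposal is correct and follows the same underlying idea as the paper: each term is the value of a fixed bounded linear functional on the appropriate Bochner space, so weak convergence from Lemma~\ref{lem:weak-convergence} passes through directly.

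The only noteworthy difference is in the treatment of the term $\int_0^T B[u^m,\varphi_k]\,dt$. You handle it uniformly with the other terms, citing the boundedness estimate $|B[u,\varphi_k]| \leq C\|u\|_{L^2(\Omega)}\|\varphi_k\|_{L^2(\Omega)}$ from Lemma~\ref{lem:bilinear-form-plant}(i) to conclude that $u\mapsto \int_0^T B[u,\varphi_k]\,dt$ is a bounded linear functional on $L^2(0,T;H^1(\Omega))$. The paper instead singles this term out and rewrites it explicitly: using the symmetry of $\gamma$ it shows $B[u^m,v] = -\int_\Omega u^m(x,t)\,g(x,t)\,dx$ with $g = \gamma * v - \Gamma v$, then verifies $g\in L^2(0,T;V)$ via Young's inequality. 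Your abstract route is cleaner and avoids this computation; the paper's concrete rewriting has the small advantage of making the dual element $g$ explicit (essentially recovering $B[u,v] = -(u,Kv)$ from Lemma~\ref{lem:formula-inner-product-bilinear}), which may be useful pedagogically but is not needed for the proof.
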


\begin{proof}
All of the linear terms, except for \( \int_{0}^{T} B[u^{m},\varphi_{k}] \, dt \), can easily be seen to satisfy this convergence by Lemma \ref{lem:weak-convergence}.
To see this for \( \int_{0}^{T} B[u^{m},\varphi_{k}] \, dt \), recall that we consider nonlocal Dirichlet boundary conditions for the plant equation: \( u = 0 \) on \( \Omega^{c} \). Thus, for \( v \in C^{1}\left( [0,T]; V \right) \): 
\begin{align*}
 B[u^{m}, v] &= \frac{1}{2} \int_{\R} \int_{\R} \left( u^{m}(y,t) - u^{m}(x,t) \right) \gamma(x,y) \left( v(y,t) - v(x,t) \right) \, dy \, dx \\ 
 &= \overbrace{\frac{1}{2} \int_{\R} \int_{\R} u^{m}(y,t) \gamma(x,y) \left( v(y,t) - v(x,t) \right) \, dy \, dx}^{\text{(I)}} \\ 
 &\quad - \underbrace{\frac{1}{2} \int_{\R} u^{m}(x,t) \int_{\R} \gamma(x,y) \left( v(y,t) - v(x,t) \right) \, dy \, dx}_{\text{(II)}} .
\end{align*}
Swapping the order of integration in integral (I) and then symbollically swapping the variables \( x \) and \( y \), we obtain:
\begin{align*}
 \text{(I)} &= \frac{1}{2} \int_{\Omega} u^{m}(y) \int_{\R} \gamma(x,y) \left( v(y,t) - v(x,t) \right) \, dx \, dy \\ 
 &= \frac{1}{2} \int_{\Omega} u^{m}(x) \int_{\R} \gamma(x,y) \left( v(x,t) - v(y,t) \right) \, dy \, dx , \\ 
 &= - \frac{1}{2} \int_{\Omega} u^{m}(x) \int_{\R} \gamma(x,y) \left( v(y,t) - v(x,t) \right) \, dy \, dx , \\ 
\end{align*}
where in the second line we used symmetry of the kernel, \( \gamma \) (see Hypothesis \ref{hyp:kernel}). In the second integral, (II), we have 
\begin{equation*}
 \text{(II)} = \frac{1}{2} \int_{\Omega} u^{m}(x) \int_{\R} \gamma(x,y) \left( v(x,t) - v(y,t) \right) \, dy \, dx 
\end{equation*}

Then, 
\begin{align*}
 B[u^{m}, v] &= - \int_{\Omega} u^{m}(x,t) \int_{\R} \gamma(x,y) \left( v(y,t) - v(x,t) \right) \, dy \, dx \\ 
 &= - \int_{\Omega} u^{m}(x,t) g(x,t) \, dx, 
\end{align*}
where in the second line we defined \( g(x,t) = \gamma * v - \Gamma v \), with \( \Gamma = \int_{\R} \gamma(x,y) \, dy \).
If we can verify that \( g \in L^{2}\left( 0,T; V \right) \), then, after passing to a subsequence, we may conclude that \( \int_{0}^{T} B[u^{m},v] \, dt \to \int_{0}^{T} B[\overline{u}, v] \, dt \).

Indeed, by the triangle inequality,
\begin{align*}
 \norm{ g }_{L^{2}\left( 0,T; L^{2}(\Omega) \right)} &= \norm{ \gamma * v - \Gamma v }_{L^{2}\left( 0,T; L^{2}(\Omega) \right)} \\ 
 &\leq \norm{ \gamma * v }_{L^{2}\left( 0,T; L^{2}(\Omega) \right)} + \Gamma \norm{ v }_{L^{2}\left( 0,T; L^{2}(\Omega) \right)}. 
\end{align*}
By Hypothesis (\ref{hyp:kernel}), \( \gamma \in L^{1}(\R) \), so that by Young's inequality: 
\begin{align*}
 \norm{ \gamma * v }_{L^{2}(\Omega)} &\leq \norm{ \gamma * v }_{L^{2}(\R)} \\ 
 &\leq \norm{ \gamma }_{L^{1}(\R)} \norm{ v }_{L^{2}(\R)} = \norm{ \gamma }_{L^{1}(\R)} \norm{ v }_{L^{2}(\Omega)} .
\end{align*}

Recall that \( V \sim L^{2}(\Omega) \). Then, since \( \norm{ \gamma }_{L^{1}(\R)} \) is constant in time and \( v \in C^{1}\left( [0,T] ; V \right) \subset L^{2}\left( 0,T; V \right) \), it follows that \( g \in L^{2}\left( 0,T; V \right) \).
\end{proof}

\begin{remark}[The nonlinear case]
\label{rem:usingAubin}
Altogether, Lemma \ref{lem:weak-convergence}, the embeddings
\begin{equation*}
 H^{1}(\Omega) \ssubset L^{2}(\Omega) \subset H^{-1}(\Omega), \quad 
 H^{1}_{0}(\Omega) \ssubset L^{2}(\Omega) \subset H^{-1}(\Omega), 
\end{equation*}
and Aubin's Compactness Theorem,  imply that the sequence of approximations $\{ u^m\}$, $\{ w^m\}$ converge strongly to functions $\bar{u}$ and $\bar{w}$ in $L^2(0,T, L^2(\Omega))$. 
\end{remark}

\begin{lemma}
 \label{lem:weak-convergence-nonlinear-terms} 
Let \( m \in \N\), and suppose that \( u^{m} \), \( w^{m} \) are functions as in Lemmas \ref{lem:weak-convergence} and \ref{lem:weak-convergence-linear-terms}. Let \((v,z) \in C^{1}\left( [0,T], H^{1}(\Omega) \right) \times C^{1}\left( [0,T], H^{1}_{0}(\Omega) \right) \).
 Then, after passing to a subsequence (which we do not relabel), 
\begin{equation}
\label{eqn:nonlinear-weak-convergence} 
\begin{aligned}
 \int_{0}^{T} \int_{\Omega} (u^{m})^{2} w^{m} (1 - b u^{m}) \, {v} \, dx \, dt & \to 
 \int_{0}^{T} \int_{\Omega} \overline{u}^{2} \overline{w} (1 - b \overline{u}) \, {v} \, dx \, dt , \\ 
 \int_{0}^{T} \int_{\Omega}  (a - (u^{m})^{2} w^{m}) \, {z} \, dx \, dt & \to 
 \int_{0}^{T} \int_{\Omega} (a - \overline{u}^{2} \overline{w}) \, {z} \, dx \, dt , \\ 
\end{aligned}
\end{equation}
as \( m \to \infty \).
\end{lemma}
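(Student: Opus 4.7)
My plan is to combine the strong convergence in $L^{2}(0,T;L^{2}(\Omega))$ of the Galerkin approximations, which follows from Aubin's Compactness Theorem as noted in Remark \ref{rem:usingAubin}, with the uniform $L^{\infty}$ bound provided by Lemma \ref{lem:u-w-Linfty}, and then invoke the Dominated Convergence Theorem on $\Omega \times [0,T]$.

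First, from the strong convergence $u^{m} \to \overline{u}$ and $w^{m} \to \overline{w}$ in $L^{2}(0,T;L^{2}(\Omega))$, I would extract a subsequence converging pointwise a.e.\ on $\Omega \times [0,T]$; by a standard diagonal argument the same subsequence can be arranged to serve both $u^{m}$ and $w^{m}$ simultaneously. Continuity of the polynomials $(u,w) \mapsto u^{2}w(1-bu)$ and $(u,w) \mapsto a - u^{2}w$, together with the measurability of the test functions, then yields pointwise a.e.\ convergence of the integrands $(u^{m})^{2}w^{m}(1-bu^{m})\,v$ and $(a - (u^{m})^{2}w^{m})\,z$ to $\overline{u}^{2}\overline{w}(1-b\overline{u})\,v$ and $(a - \overline{u}^{2}\overline{w})\,z$, respectively.

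Next I would produce an integrable dominating function. Lemma \ref{lem:u-w-Linfty} gives $\norm{u^{m}(\cdot,t)}_{L^{\infty}(\Omega)}, \norm{w^{m}(\cdot,t)}_{L^{\infty}(\Omega)} \leq M/2$ uniformly in $m$ and $t \in [0,T]$, and these bounds are inherited by $\overline{u}$ and $\overline{w}$ through the a.e.\ limit. Because $v,z \in C^{1}([0,T];H^{1}(\Omega))$ and $H^{1}(\Omega) \hookrightarrow L^{\infty}(\Omega)$ in one spatial dimension, the maps $t \mapsto \norm{v(\cdot,t)}_{L^{\infty}(\Omega)}$ and $t \mapsto \norm{z(\cdot,t)}_{L^{\infty}(\Omega)}$ are continuous and hence bounded on $[0,T]$ by constants $C_{v}$ and $C_{z}$. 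Consequently each integrand is bounded a.e.\ on the finite-measure set $\Omega \times [0,T]$ by an explicit constant (e.g.\ $(M^{3}/8)(1 + bM/2)\,C_{v}$ and $(a + M^{3}/8)\,C_{z}$), so the Dominated Convergence Theorem yields the two limits in \eqref{eqn:nonlinear-weak-convergence}.

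The main potential obstacle lies upstream: ensuring that Aubin's theorem actually upgrades the weak convergence of Lemma \ref{lem:weak-convergence} to strong $L^{2}(0,T;L^{2}(\Omega))$ convergence. This requires the chain $H^{1}(\Omega) \ssubset L^{2}(\Omega) \subset V' \cap H^{-1}(\Omega)$ of continuous and compact injections, together with the uniform bounds $\norm{u^{m}}_{L^{2}(0,T;H^{1}(\Omega))} \leq F_{1}$ and $\norm{u^{m}_{t}}_{L^{2}(0,T;V' \cap H^{-1}(\Omega))} \leq F_{2}$ (and analogously for $w^{m}$) proved in Lemma \ref{lem:u-w-ut-wt-bounded}; both ingredients are in place. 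Once the strong $L^{2}$ convergence is secured, the dominated convergence argument is essentially automatic, precisely because the cutoff-based uniform bound $M/2$ makes the nonlinearities globally bounded on the domain of integration.
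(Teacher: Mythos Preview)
Your proposal is correct and follows essentially the same approach as the paper: both arguments use Aubin's compactness (Remark \ref{rem:usingAubin}) to upgrade to strong $L^{2}(0,T;L^{2}(\Omega))$ convergence, extract an a.e.\ convergent subsequence, invoke the uniform $L^{\infty}$ bound of Lemma \ref{lem:u-w-Linfty} together with the Sobolev embedding $H^{1}(\Omega)\hookrightarrow L^{\infty}(\Omega)$ for the test functions, and conclude via the Dominated Convergence Theorem with a constant dominating function on the finite-measure set $\Omega\times[0,T]$.
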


\begin{proof}

We demonstrate that after passing to a subsequence (which we again do not relabel),
\begin{align*}
 \int_{0}^{T} \int_{\Omega} \abs{ (u^{m})^{2} w^{m} (1 - b u^{m}) - \overline{u}^{2} \overline{w} (1 - b \overline{u}) } \abs{ v } \, dx \, dt \to & 0 , \\ 
\int_{0}^{T} \int_{\Omega} \abs{ (a - (u^{m})^{2} w^{m}) - (a - \overline{u}^{2} \overline{w}) } \abs{ z } \, dx \, dt \to & 0 ,
\end{align*}
as \( m\to \infty \). 
The primary tool is the Dominated Convergence theorem. Therefore, let 
\[ f^{1}_{m} = \abs{ (u^{m})^{2} w^{m} (1 - b u^{m}) - \overline{u}^{2} \overline{w} (1 - b \overline{u}) } \abs{ v }  \]
and
\[f^{2}_{m} = \abs{ (a - (u^{m})^{2} w^{m}) - (a - \overline{u}^{2} \overline{w}) } \abs{ z } .\] 
We achieve this in two steps: First, we show that \( f^{i}_{m} \to \overline{f} \equiv 0 \) for a.e. \( (x,t) \in \Omega \times [0,T] \), \( i=1,2 \), and then that there are non-negative \( g_{1}, g_{2} \in L^{1}(\Omega \times [0,T]) \) such that \( \abs{f^{i}_{m}} \leq g_{i} \) for all \( m \) and for a.e. \( (x,t) \in \Omega \times [0,T] \), \( i=1,2 \).

In Remark \ref{rem:usingAubin}  we noted that $\{ u^m\}, \{w^m\} $ converge strongly on \( L^{2}\left( 0,T,L^{2}(\Omega) \right)\). Next, notice that  \( L^{2}\left( 0,T,L^{2}(\Omega) \right)\subset L^{1}\left( \Omega \times [0,T] \right) \). This follows by Cauchy-Schwarz: For \( f \in L^{2}\left( 0,T, L^{2}(\Omega) \right) \),
\[
\int_{\Omega \times [0,T]} \abs{ f(x,t) } \, dx \, dt \leq 
\left[ \int_{0}^{T} \int_{\Omega} \abs{ f(x,t) }^{2} \, dx \, dt \right]^{1/2} \abs{ \Omega \times [0,T] }^{1/2} < \infty.
\]
As a result, the sequences \( \left\{ u^{m} \right\}_{m=1}^{\infty} \) and \( \left\{ w^{m} \right\}_{m=1}^{\infty} \) both converge in \( L^{1}\left( \Omega \times [0,T] \right) \). 
Then, by standard integration theory (see, e.g., \cite[Corollary 2.32]{folland1999real}), we deduce the existence of subsequences \( \left\{ u^{m} \right\} \) and \( \left\{ w^{m} \right\} \) (which we do not relabel) that converge pointwise a.e. to \( \overline{{u}} \) and \( \overline{w} \), respectively, on \( \Omega \times [0,T] \). 
Because \( F^{(1)}(u,w) = u^{2} w (1 - b u) \) and \( F^{(2)}(u,w) = a - u^{2} w \) are continuous functions of \( u, w \in \R \), it follows that that \( F^{(i)}(u^{m}, w^{m}) \) converges to \( F^{(i)}(\overline{u}, \overline{w}) \) a.e., for \( i=1,2 \).

Then because \( (v, z) \in C^{1}\left( [0,T], H^{1}(\Omega) \right) \times C^{1}\left( [0,T], H^{1}_{0}(\Omega) \right) \) by hypothesis, using the Sobolev embedding \cite[Theorem 4.12 in Chapter 4]{adams2003sobolev} (see also (\ref{u-w-embedding-H1-Linfty})), there exist constants \( C_{1} \), \( C_{2} \) satisfying 
\begin{align*}
 \norm{ v }_{L^{\infty}(\Omega)} &\leq C_{1} \norm{ v }_{H^{1}(\Omega)} , \\
 \norm{ z }_{L^{\infty}(\Omega)} &\leq C_{2} \norm{ z }_{H^{1}_{0}(\Omega)} .
\end{align*}

In other words, as we've seen before, such functions are essentially bounded on \( \Omega \times [0,T] \):
\begin{align*}
 \norm{ v }_{L^{\infty}(\Omega \times [0,T])} &\leq B_{1} , \\
 \norm{ z }_{L^{\infty}(\Omega \times [0,T])} &\leq B_{2} , 
\end{align*}
for some \( B_{1}, B_{2} > 0 \). Here and throughout, \( m \in \N\) is arbitrary.

Hence, we have 
\begin{align*}
 f^{1}_{m} = 
 \abs{ F^{(1)}(u^{m}, w^{m}) - F^{(1)}(\overline{u}, \overline{w}) } \abs{ v } &\leq 
 B_{1} \abs{ F^{(1)}(u^{m}, w^{m}) - F^{(1)}(\overline{u}, \overline{w})} \to 0 , \\
 f^{2}_{m} = \abs{ F^{(2)}(u^{m}, w^{m}) - F^{(2)}(\overline{u}, \overline{w}) } \abs{ z } &\leq 
 B_{2} \abs{ F^{(2)}(u^{m}, w^{m}) - F^{(2)}(\overline{u}, \overline{w}) } \to 0 .
\end{align*}

To prove item 2, here, we let \( C_{1} = \norm{v}_{L^{\infty}(\Omega \times [0,T])} \) and \( C_{2} = \norm{z}_{L^{\infty}(\Omega \times [0,T])} \). 

Lemma \ref{lem:u-w-Linfty} gives us that
\begin{align*}
 \norm{ u^{m} }_{L^{\infty}(\Omega \times [0,T])} \leq M/2, \\
 \norm{ w^{m} }_{L^{\infty}(\Omega \times [0,T])} \leq M/2 .
\end{align*}

Then, notice that since \( u^{m} \to \overline{u} \) a.e., we also have \( \norm{\overline{u}}_{L^{\infty}(\Omega \times [0,T])} \leq M/2 \). Similarly, \( \norm{ \overline{w} }_{L^{\infty}(\Omega \times [0,T])} \leq M/2 \).
Restricting the domains of \( F^{(i)} \) to the finite intervals \( [-M/2, M/2] \), \( i=1,2 \), the \( F^{(i)} \) attain their max. We put \( M_{i} =  \max\left\{  \abs{F^{(i)}(z)}: z \in [-M/2, M/2] \right\} \) for \( i=1,2 \). 

Let \( g_{i} = 2 C_{i} M_{i}  \) for \( i=1,2 \). We now show that \( \abs{f^{i}_{m}} \leq g_{i} \) for a.e. \( (x,t) \in \Omega \times [0,T] \), \( i=1,2 \). 
Indeed, by the triangle inequality:
\begin{equation*}
 f^{i}_{m} = \abs{ F^{(i)}(u^{m},w^{m}) - F^{(i)}(\overline{u},\overline{w}) } \abs{ v } \leq C_{i} \left( \abs{F^{(i)}(u^{m}, w^{m})} + \abs{F^{(i)}(\overline{u}, \overline{w})} \right) \leq 2  C_{i} M_{i} = g_{i}, \quad 
\end{equation*}
for \( i = 1,2  \).

Since \( g_{1} \), \( g_{2} \) are constants and the domain \( \Omega \times [0,T] \) is finite, it follows that \( g_{i} \in L^{1}\left( \Omega \times [0,T] \right) \), \( i=1,2 \). 

The conclusion follows.
\end{proof}

\subsection{Proof of the main theorem}
\label{sub:5_2}

\begin{theorem}[Main Theorem]
Let \( u_{0}, w_{0} \) be given functions in \( H^{1}(\Omega) \). Then, there exist positive constants \( \mathcal{C}_{1} \), \( \mathcal{C}_{2} \), and \( T \), such that if 
\begin{equation*}
 \norm{ u_{0} }_{L^{2}(\Omega)} + \norm{ w_{0} }_{L^{2}(\Omega)} < \mathcal{C}_{1}, \quad 
 \norm{ \partial_{x} u_{0} }_{L^{2}(\Omega)} + \norm{ \partial_{x} w_{0} }_{L^{2}(\Omega)} < \mathcal{C}_{2}, 
\end{equation*}
then the weak formulation of our model equations, 
\begin{align*}
 \left\langle u_{t}, \varphi \right\rangle_{\left( V^{\prime} \times V \right)} + d B[u, \varphi] + \mu \left( u, \varphi \right)_{L^{2}(\Omega)} &= 
 \left( u^{2} w (1 - b u), \varphi \right)_{L^{2}(\Omega)} \\ 
 \left\langle w_{t}, \psi \right\rangle_{\left( H^{-1}(\Omega) \times H^{1}_{0}(\Omega) \right)} + G[w, \psi] + \left( w, \psi \right)_{L^{2}(\Omega)} &= 
 \left( a - u^{2} w, \psi \right)_{L^{2}(\Omega)} ,
\end{align*}
has a solution \( (\overline{u}, \overline{w}) \in L^{2}\left( 0,T; H^{1}(\Omega) \right) \times L^{2}\left( 0,T; H^{1}_{0}(\Omega) \right) \) satisfying the initial conditions, \( \overline{u}(x,0) = u_{0} \) and \( \overline{w}(x,0) = w_{0} \).
\end{theorem}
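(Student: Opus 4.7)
The plan is to assemble the results of Sections 3--5 into a passage-to-the-limit argument for the projected equations \eqref{eqn:model-weak-proj}, thereby producing a weak solution of the original model.

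I would begin by selecting the constants $\mathcal{C}_1$, $\mathcal{C}_2$ and the existence time $T>0$ provided by Lemma \ref{lem:u-w-Linfty}, so that the smallness hypotheses on $u_0, w_0 \in H^1(\Omega)$ force $\norm{u^m}_{L^\infty(\Omega \times [0,T])}, \norm{w^m}_{L^\infty(\Omega \times [0,T])} < M/2$ uniformly in $m$. By Remark \ref{concl:2} the cutoff function $\sigma$ is then the identity on the range of $(u^m, w^m)$, so that the Galerkin scheme actually solves the genuine (uncut) projected equations \eqref{eqn:model-weak-proj}. Lemma \ref{lem:weak-convergence} next extracts subsequences with $u^{m_l} \rightharpoonup \overline{u}$ in $L^2(0,T; H^1(\Omega))$ and $w^{m_l} \rightharpoonup \overline{w}$ in $L^2(0,T; H^1_0(\Omega))$, and with weak convergence of the corresponding time derivatives in the appropriate dual spaces.

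Next, for each fixed index $k$, I would take test functions $\varphi(x,t) = \eta(t)\varphi_k(x)$ and $\psi(x,t) = \eta(t)\psi_k(x)$, where $\eta \in C^1([0,T])$ satisfies $\eta(T)=0$. Multiplying the $k$-th projected equations in \eqref{eqn:model-weak-proj} by $\eta(t)$, integrating in time over $(0,T)$, and restricting to $m_l \geq k$, the linear terms converge by Lemma \ref{lem:weak-convergence-linear-terms} while the cubic terms converge by Lemma \ref{lem:weak-convergence-nonlinear-terms}. Integrating the time derivative by parts in the pairing $\langle u^{m_l}_t, \eta\varphi_k \rangle$ and then sending $m_l \to \infty$, I obtain
\[
-\!\int_0^T \!(\overline{u}, \varphi_k)_{L^2(\Omega)}\eta'\, dt - (u_0, \varphi_k)_{L^2(\Omega)}\eta(0) + d\!\int_0^T\! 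B[\overline{u}, \varphi_k]\,\eta\, dt + \mu\!\int_0^T\!(\overline{u}, \varphi_k)_{L^2(\Omega)}\eta\, dt = \int_0^T\! \bigl(\overline{u}^{\,2}\overline{w}(1 - b\overline{u}), \varphi_k\bigr)_{L^2(\Omega)}\eta\, dt,
\]
together with the analogous identity involving $\overline{w}$ and $\psi_k$. Here I use that $d_k(0) = (u_0,\varphi_k)$ by \eqref{eqn:intial_conditions} and that the initial projection converges to $u_0$ in $L^2(\Omega)$ by completeness. Since $\{\varphi_k\}$ and $\{\psi_k\}$ are orthonormal bases of $V$ and $H^1_0(\Omega)$, respectively (Subsection \ref{sec:3_4}), linearity and density extend these identities to arbitrary $\varphi \in V$ and $\psi \in H^1_0(\Omega)$.

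Finally, I would recover the pointwise-in-time weak formulation stated in the theorem by first choosing $\eta \in C^\infty_c(0,T)$, which yields the weak PDE in the duality pairings for a.e.\ $t \in (0,T)$. Comparing this with the integration-by-parts identity above for a general $\eta$ with $\eta(0)=1$ and $\eta(T)=0$ isolates the boundary contribution and forces $(\overline{u}(\cdot,0) - u_0, \varphi_k)_{L^2(\Omega)} = 0$ for every $k$; completeness of $\{\varphi_k\}$ in $V \sim L^2(\Omega)$ then delivers $\overline{u}(\cdot,0) = u_0$, and an identical argument gives $\overline{w}(\cdot,0) = w_0$. The main analytical obstruction emphasized in the introduction --- passing to the limit in the cubic nonlinearity without $H^1$-regularity for $u^m$ --- has already been surmounted by the augmented system, by the Aubin compactness discussion of Remark \ref{rem:usingAubin}, and by Lemma \ref{lem:weak-convergence-nonlinear-terms}; what remains at the level of the main theorem is essentially a density-and-bookkeeping exercise, with the one subtle point being the verification of the initial data, handled by the integration-by-parts step above.
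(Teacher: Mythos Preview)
Your proposal is correct and follows essentially the same route as the paper: both select the constants and time from Lemma \ref{lem:u-w-Linfty}, pass to the limit in the time-integrated projected equations via Lemmas \ref{lem:weak-convergence-linear-terms} and \ref{lem:weak-convergence-nonlinear-terms}, extend by density, and recover the initial data through an integration-by-parts comparison. The only cosmetic differences are that the paper tests against finite sums $\sum_k \alpha_k(t)\varphi_k$ rather than a single $\eta(t)\varphi_k$, and it passes to the weak limit \emph{before} integrating by parts in time (then compares with the analogous identity obtained from the approximations), whereas you integrate by parts first; neither reordering affects the argument.
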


\begin{proof}
Let \( u^{m} \), \( w^{m} \) denote functions of the form (\ref{eqn:galerkin-approxs}) that solve the first two equations in (\ref{eqn:model-weak-proj}). By Lemma \ref{lem:u-w-Linfty}, the constants \( \mathcal{C}_{1} \) and \( \mathcal{C}_{2} \) are given by 
\begin{align*}
 \mathcal{C}_{1} &= M / (2 C E_{1}), \\
 \mathcal{C}_{2} &= M / (2 C E_{2}) , 
\end{align*}
where \( E_{1} \), \( E_{2} \) are defined in (\ref{eqn:E1}), (\ref{eqn:E2}), respectively, and \( C \) is the positive constant appearing in the embedding (\ref{u-w-embedding-H1-Linfty}).

As in previous proofs, we write \( \left( \cdot, \cdot \right) = \left( \cdot, \cdot \right)_{L^{2}(\Omega)} \) and let \( F^{(1)}(u,w) = u^{2} w (1-b u) \) and \( F^{(2)}(u,w) = a - u^{2} w \).

We choose test functions 
\begin{equation*}
 (v, z) \in C^{1}\left( [0,T]; H^1(\Omega) \right) \times C^{1}\left( [0,T]; H^{1}_{0}(\Omega) \right)
\end{equation*}
of the form 
\begin{equation*}
 v(t) = \sum_{k=0}^{2 N} \alpha_{k}(t) \varphi_{k}  \quad \text{and} \quad 
 z(t) = \sum_{k=1}^{N} \beta_{k}(t) \psi_{k}, 
\end{equation*}
where $N \in \N$ is fixed and \( \alpha_{k}(t) \), \( \beta_{k}(t) \) are given smooth functions and \( \left\{ \varphi_{k} \right\}_{k=0}^{2m} \) and \( \left\{ \psi_{k} \right\}_{k=1}^{m} \) are finite subsets of bases for \( V \) and \( H^{1}_{0}(\Omega) \), respectively (see (\ref{eqn:basis-plant})). Then notice that 
\begin{align*}
\left( u^{m}_{t}, \varphi_{j} \right)_{L^{2}(\Omega)} 
+ d B[u^{m},\varphi_{j}] + \mu \left( u^{m}, \varphi _{j} \right)_{L^{2}(\Omega)} &= \left( (u^{m})^{2} w^{m} ( 1-b u^{m} ) , \varphi_{j} \right)_{L^{2}(\Omega)} ,  \\ 
\left( w^{m}_{t}, \psi_{k} \right)_{L^{2}(\Omega)} 
+ G[w^{m}, \psi_{k}] + \left( w^{m}, \psi_{k} \right)_{L^{2}(\Omega)} &= \left( a - (u^{m})^{2} w^{m}, \psi_{k} \right)_{L^{2}(\Omega)} ,
\end{align*}
hold for each $j = 0, 1, \cdots, 2 m$, $k = 1, 2, \cdots, m$.
So, choose \( m \geq N \) and multiply the first and second equations above by \( \alpha_{j}(t) \), \( j=0,1,\cdots, 2N \) and \( \beta_{k}(t) \), \( k = 1, 2, \cdots, N \), respectively, and sum. This yields: 
\begin{align*}
 \left( u^{m}_{t}, v \right) + d B[u^{m}, v] + \mu \left( u^{m}, v \right) &= 
 \left( F^{(1)}(u^{m},w^{m}), v \right) , \\ 
 \left( w^{m}_{t}, z \right) + G[w^{m}, z] + \left( w^{m}, z \right) &= 
 \left( F^{(2)}(u^{m},w^{m}), z \right) .
\end{align*}

We note that \( u^{m}_{t} \) and \( w^{m}_{t} \) are elements in 
\begin{equation*}
 L^{2}(\Omega) \sim \left( L^{2}(\Omega) \right)^{*} \sim V^{\prime} \quad 
 \text{and} \quad 
 L^{2}(\Omega) \subset H^{-1}(\Omega) ,  
\end{equation*}
respectively. Because the spaces above are Hilbert spaces, integrating the centered equations above with respect to \( t \) then gives: 
\begin{align*}
 \int_{0}^{T} \left\langle u^{m}_{t}, v \right\rangle_{\left( V^{\prime} \times V \right)} + d B[u^{m}, v] + \mu \left( u^{m}, v \right) \, dt &= \int_{0}^{T} \left( F^{(1)}(u^{m}, w^{m}), v \right) \, dt , \\ 
 \int_{0}^{T} \left\langle w^{m}_{t}, z \right\rangle_{\left( H^{-1}(\Omega) \times H^{1}_{0}(\Omega) \right)} + G[w^{m}, z] + \left( w^{m}, z \right) \, dt &= \int_{0}^{T} \left( F^{(2)}(u^{m}, w^{m}), z \right) \, dt .
\end{align*}

Passing to a subsequence (which we do not relabel) and subsequent weak limits, our choice of test functions \( v \) and \( z \) allow us to invoke Lemmas \ref{lem:weak-convergence-linear-terms} and \ref{lem:weak-convergence-nonlinear-terms} to find that 
\begin{equation}
\label{eqn:weak-limit}
\begin{aligned}
 \int_{0}^{T} \left\langle \overline{u}_{t}, v \right\rangle_{\left( V^{\prime} \times V \right)} + d B[\overline{u}^{m}, v] + \mu \left( \overline{u}, v \right) \, dt &= \int_{0}^{T} \left( F^{(1)}(\overline{u}, \overline{w}), v \right) \, dt , \\ 
 \int_{0}^{T} \left\langle \overline{w}_{t}, z \right\rangle_{\left( H^{-1}(\Omega) \times H^{1}_{0}(\Omega) \right)} + G[\overline{w}, z] + \left( \overline{w}, z \right) \, dt &= \int_{0}^{T} \left( F^{(2)}(\overline{u}, \overline{w}), z \right) \, dt .
\end{aligned}
\end{equation}

Since the space $V$ is equivalent to $L^2(\Omega)$ we have that the spaces \( C^{1}\left( [0,T] ; H^1(\Omega) \right) \) and \( C^{1}\left( [0,T]; H^{1}_{0}(\Omega) \right) \) are dense in \( L^{2}\left( 0,T; V \right) \) and \( L^{2}\left( 0,T; H^{1}_{0}(\Omega) \right) \), respectively. Therefore,  the above expressions hold for all \( (v,z) \in L^{2}\left( 0,T; V \right) \times L^{2}\left( 0,T; H^{1}_{0}(\Omega) \right) \).

Then, applying \cite[Theorem 2 in Section 5.9]{evans2022partial}, it follows that \( \overline{u}, \overline{w} \in C\left( [0,T]; L^{2}(\Omega) \right) \).

Next, let \( u_{0} = g_{1} \) and \( w_{0} = g_{2} \). To show that \( \overline{u}(x,0) = g_{1} \), \( \overline{w}(x,0_) = g_{2} \), we integrate by parts: 
\begin{align*}
 \int_{0}^{T} \left\langle \overline{u}_{t}, v \right\rangle_{\left( V^{\prime} \times V \right)} \, dt &= - \int_{0}^{T} \left\langle v_{t}, \overline{u} \right\rangle_{\left( V^{\prime} \times V \right)} \, dt + \left( \overline{u}(T), v(T) \right) - \left( \overline{u}(0), v(0) \right) , \\ 
 \int_{0}^{T} \left\langle \overline{w}_{t}, z \right\rangle_{\left( H^{-1}(\Omega) \times H^{1}_{0}(\Omega) \right)} \, dt &= - \int_{0}^{T} \left\langle z_{t}, \overline{w} \right\rangle_{\left( H^{-1}(\Omega) \times H^{1}_{0}(\Omega) \right)} \, dt + \left( \overline{w}(T), z(T) \right) - \left( \overline{w}(0), z(0) \right)
\end{align*}

Suppose \( v(T) = z(T) = 0 \). Then inserting the above into (\ref{eqn:weak-limit}), we obtain:
\begin{equation}
\label{eqn:weak-limit2}
\begin{aligned}
 - \int_{0}^{T} \left\langle v_{t}, \overline{u} \right\rangle_{\left( V^{\prime} \times V \right)} + d B[\overline{u}, v] + \mu \left( \overline{u}, v \right) \, dt &= \int_{0}^{T} \left( F^{(1)}(\overline{u}, \overline{w}), v \right) \, dt + \left( \overline{u}(0), v(0) \right) , \\
 - \int_{0}^{T} \left\langle z_{t}, \overline{w} \right\rangle_{\left( H^{-1}(\Omega) \times H^{1}_{0}(\Omega) \right)} +  G[\overline{w}, z] + \left( \overline{w}, z \right) \, dt &= \int_{0}^{T} \left( F^{(2)}(\overline{u}, \overline{w}), z \right) \, dt + \left( \overline{w}(0), z(0) \right)
\end{aligned}
\end{equation}

Similarly, the sequences of approximations satisfy 
\begin{align*}
 - \int_{0}^{T} \left\langle v_{t}, u^{m} \right\rangle_{\left( V^{\prime} \times V \right)} + d B[u^{m}, v] + \mu \left( u^{m}, v \right) \, dt &= \int_{0}^{T} \left( F^{(1)}(u^{m}, w^{m}), v \right) \, dt \\ 
 & \qquad + \left( u^{m}(0), v(0) \right) , \\
 - \int_{0}^{T} \left\langle z_{t}, w^{m} \right\rangle_{\left( H^{-1}(\Omega) \times H^{1}_{0}(\Omega) \right)} +  G[w^{m}, z] + \left( w^{m}, z \right) \, dt &= \int_{0}^{T} \left( F^{(2)}(u^{m}, w^{m}), z \right) \, dt \\ 
 & \qquad + \left( w^{m}(0), z(0) \right)
\end{align*}

Recall that \( u^{m} = \sum_{k=0}^{2m} d_{k}(t) \varphi_{k} \) and \( w^{m} = \sum_{k=1}^{m} e_{k}(t) \psi_{k} \). We choose the coefficients \( d_{k}, e_{k} \) to be smooth and satisfy 
\begin{align*}
 d_{k}(0) &= \left( g_{1}, \varphi_{k} \right), \quad k =0,1,\cdots, 2m, \\ 
 e_{k}(0) &= \left( g_{2}, \psi_{k} \right), \quad k =1,2,\cdots, m .
\end{align*}

Hence, \( u^{m}(0) \to g_{1} \) and \( w^{m}(0) \to g_{2} \) in \( L^{2}(\Omega) \). Then, setting \( m = m_{l} \) and passing to weak limits, we obtain
\begin{align*}
 - \int_{0}^{T} \left\langle v_{t}, \overline{u} \right\rangle_{\left( V^{\prime} \times V \right)} + d B[\overline{u}, v] + \mu \left( \overline{u}, v \right) \, dt &= \int_{0}^{T} \left( F^{(1)}(\overline{u}, \overline{w}), v \right) \, dt + \left( g_{1}, v(0) \right) , \\
 - \int_{0}^{T} \left\langle z_{t}, \overline{w} \right\rangle_{\left( H^{-1}(\Omega) \times H^{1}_{0}(\Omega) \right)} +  G[\overline{w}, z] + \left( \overline{w}, z \right) \, dt &= \int_{0}^{T} \left( F^{(2)}(\overline{u}, \overline{w}), z \right) \, dt + \left( g_{2}, z(0) \right) .
\end{align*}

Then because \( v(0), z(0) \in L^{2} (\Omega) \) are arbitrary, comparing the above with (\ref{eqn:weak-limit2}), we conclude that \( \overline{u}(x,0) = g_{1} \), \( \overline{w}(x,0) = g_{2} \).

\end{proof}

\section{Discussion}

In this paper, we adapt the Galerkin approach to prove the existence and uniqueness of weak solutions for a modified version of the Klausmeier model, a system of coupled equations that describes the interplay between water and plant biomass in arid ecosystems. In contrast to the original model, which uses a Laplace operator to describe the spread of plant seeds, our equations employ a convolution operator to account for long-range dispersal events caused by wind and animals. The resulting system of integro-differential equations is then posed on a 1-d bounded domain, $\Omega$, and supplemented by homogeneous Dirichlet boundary constraints modeling a desert state on $\R \setminus \Omega$. For the water equation, these correspond to local boundary conditions prescribed along the boundary, while for the plant equation they take the form of an additional constraint that requires the value of plant biomass to be zero on $\R \setminus \Omega$. 

In our analysis, the main difficulty comes from the nonlocal operator, which is defined by a positive, symmetric, and spatially extended convolution kernel with finite second moment. Because the corresponding bilinear form then yields energy estimates that only bound the sequence of Galerkin approximations for the plant variable in $L^2(\Omega)$ and not in $H^1(\Omega)$, we are not able to directly use compactness arguments to prove the weak convergence of the nonlinear terms. To achieve the required regularity we included equations for two additional unknown functions, which we then showed correspond to the spatial derivatives of the water and plant biomass variables. The result is a proof for the short-time existence and uniqueness of weak solutions to the nonlocal Klausmeier model under the assumption of small initial data in $H^1(\Omega)$.

Although we consider here only the 1-d case, we expect that our results will remain valid in higher dimensions. In fact, the same approach can be carried out without major modifications when considering bounded domains in $\R^n$. The only caveat is that in order to apply the required compactness result, one needs to bound the sequence of Galerkin approximations in a Sobolev space with higher regularity. This can be achieved again by including equations for additional variables representing derivatives of the unknown functions. Notice that the work needed to prove the energy bounds for these extra variables is straightforward but indeed more cumbersome. Consequently, we optimistically take the present work as a guarantee that any finite element discretization of the nonlocal Klausmier model in $\Omega \subset \R^n$ is well-posed. Our future work in this direction will follow \cite{cappanera2024analysis} in order to carry out numerical simulations of this set of integro-differential equations, now posed on a 2-d bounded domain. 

We emphasize that the approach presented here comes with some limitations. Mainly, it does not deliver a proof that, subject to nonnegative initial conditions, our solutions will remain nonnegative, which is expected if the equations are to model an arid ecosystem. Similarly, our method does not guarantee global existence of these solutions. These two properties can be obtained, for example, by assuming that the unknowns live in $L^\infty$ and using semigroup theory. Indeed, as mentioned in the introduction, previous work has successfully used this method to prove  global existence of nonnegative solutions for a nonlocal Gray-Scott model \cite{Philippe2023}. We expect that a similar approach can be used in the present case, and this is part of our future work. Notice, however, that it is not clear if this technique can be applied to our problem when considering nonlocal Neumann boundary conditions for the plant equation, since the nonlocal operator in this case may not give rise to a semigroup in $L^\infty(\Omega)$. In this sense, the method presented in this article provides, to our knowledge, the only viable approach to prove the existence of (weak) solutions for nonlocal problems with zero flux boundary conditions. For an example of an application, see \cite{cappanera2024analysis}, where again a nonlocal Gray-Scott model is analyzed.

Lastly, we modified the original Klausmeier model (\ref{eqn:original_Klaus}) to give a more realistic description of plant-water dynamics in semiarid regions. In our work, we assumed that 1) water diffuses in the soil, 2) we supposed that plant growth is limited by soil resources, and 3) we accounted for long-range seed dispersal. Further modifications that are possible include better descriptions for
water flow. For instance, one could replace the linear diffusion term, \( w_{xx} \), with a nonlinear diffusion term, \( (w^{\rho})_{xx} \), \( \rho > 1 \)  (see \cite{vanderStelt2013, vazquez2006porous}), or even with a nonlinear, nonlocal diffusion term (see, e.g., \cite{biler2015nonlocal, vazquez2012nonlineardiffusion, Caffarelli_2011}) in order to model soil as a porous medium. This line of work leads to the theory of degenerate parabolic equations and would change the character of the estimates represented in the present work. However, this is an interesting research question that will broaden the application of our model to be closer to the actual dynamics of plants and water.

\bibliographystyle{unsrt}  
\bibliography{ref}

\end{document}